\pgfplotsset{compat=1.15}
\newtheorem{theorem}{Theorem}[section]
\newtheorem{proposition}[theorem]{Proposition}
\newtheorem{lemma}[theorem]{Lemma}
\newtheorem{corollary}[theorem]{Corollary}
\theoremstyle{definition}
\newtheorem{definition}[theorem]{Definition}
\newtheorem{question}[theorem]{Question}
\theoremstyle{remark}
\newtheorem{example}[theorem]{Example}
\theoremstyle{remark}
\newtheorem{remark}[theorem]{Remark}
\DeclareMathOperator{\spec}{Spec}
\DeclareMathOperator{\aut}{Aut}
\DeclareMathOperator{\ord}{ord}
\DeclareMathOperator{\an}{an}
\DeclareMathOperator{\sw}{sw}
\DeclareMathOperator{\Frac}{Frac}
\DeclareMathOperator{\Def}{Def}
\DeclareMathOperator{\Alg}{Alg}
\DeclareMathOperator{\dsw}{dsw}
\DeclareMathOperator{\st}{st}
\DeclareMathOperator{\Proj}{Proj}
\DeclareMathOperator{\rsw}{rsw}
\begin{document}

\title{Hurwitz trees and deformations of Artin-Schreier covers}

\author{Huy Dang}
\email{hqd4bz@virginia.edu}
\address{Institute of Mathematics, Vietnam Academy of Science and Technology, Hanoi, Vietnam}

%

\classification{14H30}
\keywords{Artin-Schreier theory, moduli space, refined Swan conductor, good reduction}
\thanks{The author is supported by NSF-DMS grants 1602054 and 1900396.}

\begin{abstract}
Let $R$ be a complete discrete valuation ring of equal characteristic $p>0$. In this paper, we first generalize the notion of Hurwitz trees in mixed characteristic to Artin-Schreier covers ($\mathbb{Z}/p$-covers in characteristic $p$) of a formal disc over $R$. It is a combinatorial-differential object with the shape of the dual graph of a cover's semi-stable model, and is endowed with essential degeneration data, measured by Kato's refined Swan conductors, of that cover. We then show how the existence of a deformation between two given Artin-Schreier covers is completely determined by the existence of certain Hurwitz trees. Finally, using the Hurwitz trees criteria, we improve a known result about the connectedness of the moduli space of Artin-Schreier curves of fixed genus.
\end{abstract}

\maketitle

\vspace*{6pt}\tableofcontents  

\section{Introduction}

Throughout this paper, we assume that $k$ is an algebraically closed field of characteristic $p>0$. We use the notation $\{ \ldots \}$ to denote a multi-set. An \textit{Artin-Schreier curve} is a smooth, projective, connected $k$-curve $Y$, which is a $\mathbb{Z}/p$-cover of the projective line $\mathbb{P}^1_k$. The moduli space of Artin-Schreier $k$-curves of fixed genus $g$, which we denote by $\mathcal{AS}_g$, has the  property that there are curves with different \textit{branching data} (e.g., different number of branch points) which lie in the same connected component  \cite{MR2985514}. That motivates the study of equal characteristic deformations of these covers. In \cite{DANG2020398}, by explicitly constructing some local deformations, it was shown that $\mathcal{AS}_g$ is connected when $g$ is sufficiently large.

\begin{theorem}[cf. {\cite[Theorem 1.1]{DANG2020398}}]\phantomsection
\label{thmconnected} 
\begin{itemize}[noitemsep]
  \item When $p=3$, $\mathcal{AS}_g$ is always connected.
  \item When $p=5$,  $\mathcal{AS}_g$  is connected for any $g \ge 14$ and $g=0,2$. It is disconnected if $g=4,6,8$.
  \item When $p>5$, $\mathcal{AS}_g$ is connected if $g \ge \frac{(p^3-2p^2+p-8)(p-1)}{8}$ and $g \le \frac{p-1}{2}$. It is disconnected if $\frac{p-1}{2} < g \le \frac{(p-1)^2}{2}$.
\end{itemize}
\end{theorem}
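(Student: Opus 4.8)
The plan is to reduce the topological question about $\mathcal{AS}_g$ to a purely combinatorial connectedness problem and then to feed that problem with explicit equal-characteristic deformations. By Artin--Schreier theory every such cover is birational to an affine curve $y^p - y = f(x)$ with $f \in k(x)$, and after clearing poles of order divisible by $p$ one attaches to each branch point $P_j$ a conductor $d_j \geq 1$ with $p \nmid d_j$. Riemann--Hurwitz gives $g = \tfrac{p-1}{2}\big(\sum_{j}(d_j+1) - 2\big)$, so a fixed $g$ corresponds to the finite set of multisets $\mathbf{d} = \{d_1,\dots,d_r\}$ with $\sum_j(d_j+1) = \tfrac{2g}{p-1}+2 =: N$. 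First I would stratify $\mathcal{AS}_g = \bigsqcup_{\mathbf d}\mathcal{AS}_{g,\mathbf d}$ by branching datum and recall that each stratum is irreducible (the configuration space of branch points is irreducible and the fibre of Artin--Schreier data with fixed conductors is connected). Consequently $\mathcal{AS}_g$ is connected if and only if the graph $\Gamma_g$ whose vertices are the data $\mathbf d$ and whose edges record containments of closures between neighbouring strata is connected.

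The core of the argument is to produce the edges of $\Gamma_g$ as honest deformations. The genus formula shows that the elementary genus-preserving move is the coalescing/splitting of branch points: merging two points of conductors $d_1,d_2$ into one of conductor $d = d_1 + d_2 + 1$ leaves $N$ unchanged, and conversely splitting $d$ into $(d_1,d_2)$ with $d_1 + d_2 = d-1$ is its inverse. I would realize such a move by writing down a one-parameter family $y^p - y = f_t(x)$ over $k[[t]]$ whose generic fibre carries the datum with two separate branch points and whose special fibre has them collide, then compute the conductor of the limit and verify that the total space stays smooth with constant arithmetic genus. The existence of this family is the crux: whether two colliding points can fuse into a single point of the prescribed conductor is governed by a wild-ramification obstruction, precisely the differential/refined-Swan-conductor data that the Hurwitz-tree formalism of the rest of the paper makes systematic.

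Once the realizable moves are identified, connectedness for the stated genus ranges follows by a reduction algorithm: in the high-genus range the conductors are large enough relative to $p$ that every splitting/merging move is unobstructed, so every datum $\mathbf d$ can be transported to a canonical maximally-coalesced datum, which proves $\Gamma_g$ connected; the cases $g \le \tfrac{p-1}{2}$ (equivalently $N \le 3$) are immediate because there is a single stratum. For the disconnectedness claims in the low range $\tfrac{p-1}{2} < g \le \tfrac{(p-1)^2}{2}$ I would instead exhibit small data --- for instance, for $p=5,\ g=4$ the two data $\{1,1\}$ and $\{3\}$ --- and show the bridging coalescence is obstructed, so $\Gamma_g$ is disconnected. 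The main obstacle, and the part that forces the explicit numeric bounds, is exactly this bidirectional control: constructing deformations in the large-genus regime and, harder still, proving the \emph{non-existence} of deformations in the small-genus regime, which requires an obstruction theory rather than a construction, and where the case-by-case analysis for small $p$ (the $p=3$ versus $p=5$ dichotomy) seems unavoidable.
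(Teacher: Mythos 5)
Your overall framing matches the strategy behind this result (which, note, is only \emph{recalled} here from \cite{DANG2020398}; the present paper does not reprove it): stratify $\mathcal{AS}_g$ by branching data, use irreducibility of the strata to reduce connectedness of $\mathcal{AS}_g$ to connectedness of the graph of closure relations among strata (the graph $C_d$), reduce each potential edge to a local deformation problem over $k[[t]]$ at a single branch point, and settle existence by an explicit family checked against the different criterion.

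The genuine gap is in your high-genus step. It is \emph{not} true that once $g$ is large ``every splitting/merging move is unobstructed'': the obstruction is local at the branch point being split and depends only on the conductors involved, not on $g$. Concretely, a split of a conductor into $\{h_1,\dots,h_n\}$ with $1<h_i<p$ requires an exact differential form $dx/\prod_i(x-P_i)^{h_i}$, and Proposition \ref{propfirstcombinatorial} shows this forces $\sum_i h_i\ge p+n$; for instance, for $p=5$ there is no deformation of type $[5]\to[3,2]^{\top}$ for any $g$ (Example \ref{examplehurwitzobstruction}). If every move were realizable your argument would prove $\mathcal{AS}_g$ irreducible, which is false. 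The actual proof needs the specific list of realizable moves (Proposition \ref{propnonOSStype}, i.e.\ Theorem 3.7 of \cite{DANG2020398}) together with a combinatorial argument that these particular moves suffice to connect the graph --- the key structural step being that all positive-codimension strata fall into one connected component when $d+2\not\equiv 1\pmod p$ --- and this is precisely where the bound $(p^3-2p^2+p-8)(p-1)/8$ comes from. Your disconnectedness sketch is closer but under-specified: showing one coalescence is obstructed does not separate two strata unless indirect paths through other strata are also excluded. The clean argument is that for $(p-1)/2<g\le(p-1)^2/2$ the total conductor satisfies $\sum h_i=2g/(p-1)+2\le p+1<p+n$ for every candidate split into $n\ge 2$ parts, so no nontrivial deformation exists at all, every stratum is closed, and there are at least two of them.
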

\noindent Observe that, when $p=5$, the only cases that the theorem does not cover are $g=10$ and $g=12$ ($\mathcal{AS}_g$ is empty otherwise). The techniques from \cite{DANG2020398} are inadequate to study these moduli spaces, as well as other cases. In this paper, with the aim to improve that result, we study local deformations in more detail using the notion of \textit{Hurwitz tree}. A quick overview is given below.

Let $R$ be a complete discrete valuation ring with characteristic $p>0$ residue field. When $R$ is of mixed characteristic (for example $R=W(k)$ where $k$ is a perfect field of characteristic $p>0$), a $G$-cover of the formal disc $\spec R[[X]]$ gives rise to a combinatorial object called a \textit{Hurwitz tree}, which has the shape of the dual graph of the semistable model associated to the cover (see e.g., \cite{MR2534115}, \cite{2000math.....11098H}). The existence of such a tree, along with some other conditions, is necessary for a cover in characteristic $p$ to "lift" to characteristic $0$. In particular, when $G\cong \mathbb{Z}/p$, Henrio proves that the lifts of a $G$-cover can be classified by Hurwitz trees of certain forms associated to the cover \cite{2000math.....11098H}. Moreover, by generalizing Henrio's technique, Bouw and Wewers prove an analog of Henrio's lifting result for $G \cong \mathbb{Z}/p \rtimes \mathbb{Z}/m$ where $m$ is prime to $p$ \cite{MR2254623}, and show that all $D_p$-covers (where $p \neq 2$) lift.

In this paper, we generalize the notion of Hurwitz tree to the case where $R$ is a complete discrete valuation ring of equal characteristic, specifically, $R=k[[t]]$, and $G \cong \mathbb{Z}/p$. The equal characteristic degeneration of $\mathbb{Z}/p$-covers was well-studied by Maugeais and Sa{\"i}di in \cite{MR2015076} and \cite{MR2377173}. Using their results together with Henrio's idea in \cite{2000math.....11098H}, we show that the existence of a flat deformation between Artin-Schreier covers of given branching data equates to the existence of a Hurwitz tree that satisfies certain criteria that are imposed by these data. Below is the main result of this paper. 

\begin{theorem}
\label{theoremmain}

Suppose $d, d_1, \ldots, d_r \not \equiv 0 \pmod{p}$ are integers such that $\sum_{i=1}^r (d_i+1)=d+1$. Fix a projective line $\mathbb{P}^1_{k[[t]]}$ and a $k$-point $b$ of $\mathbb{P}^1_{k[[t]]}$. Then there exists a $\mathbb{Z}/p$-Galois cover of $\mathbb{P}^1_{k[[t]]}$ whose special fiber is an Artin-Schreier cover over $k$, branched only at $b$ with ramification jump $d$ and whose generic fiber is an Artin-Schreier cover of $\mathbb{P}^1_{k((t))}$ branched at $r$ points that specialize to $b$ and which have ramification jumps $d_1, d_2, \ldots, d_r$ if and only if there exists a Hurwitz tree of type $\{d_1+1, d_2+1, \ldots, d_n+1\}$.
\end{theorem}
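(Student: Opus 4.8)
The plan is to set up a dictionary between $\mathbb{Z}/p$-covers of the formal disc over $R=k[[t]]$ and decorated trees, and then prove the two implications separately, leaning on the Maugeais--Sa\"{i}di description of equal-characteristic degeneration of $\mathbb{Z}/p$-covers together with the adaptation of Henrio's strategy to Kato's refined-Swan-conductor formalism. Throughout, the central geometric object is the stable model of the cover $Y \to \mathbb{P}^1_R$. Because the base is a formal disc, the special fiber of a semistable model of $\mathbb{P}^1_R$ is a tree of projective lines, whose dual graph $T$ is rooted at the component meeting the specialization of $b$. Each vertex $v$ of $T$ should carry a differential datum $\omega_v$ on the corresponding $\mathbb{P}^1_k$, recording via the refined Swan conductor how the $\mathbb{Z}/p$-cover degenerates along that component, and each edge should carry a thickness together with a depth. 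The axioms of a Hurwitz tree are precisely the local compatibility conditions these decorations must satisfy, and a tree \emph{of type} $\{d_1+1,\dots,d_r+1\}$ is one whose $r$ leaves record the ramification invariants $d_i$; the hypothesis $\sum_i(d_i+1)=d+1$ is the conductor balance forcing the root to record the special-fiber jump $d$. The congruences $d,d_i \not\equiv 0 \pmod p$ guarantee that the defining equations are in reduced Artin--Schreier form, so the pole orders are the genuine ramification jumps.

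For the forward implication (a deformation produces a tree), I would begin with a given $\mathbb{Z}/p$-Galois cover over $R$ having the prescribed fibers and apply the Maugeais--Sa\"{i}di stable-reduction theory to extract the stable model of $Y \to \mathbb{P}^1_R$ and hence the tree $T$. I would then read off the decorations: on each component the reduction of the Artin--Schreier equation determines $\omega_v$, while the refined Swan conductor of the cover along each boundary annulus determines the depth and direction attached to each edge. The bulk of this direction is verifying that these decorations satisfy every Hurwitz-tree axiom — that $\omega_v$ has the prescribed pole orders and residue behavior, that the Swan conductors match across each edge, and that the local conductor-discriminant relation holds at each vertex so the invariants accumulate correctly toward the leaves. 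Since the generic branch points specialize to $b$ with jumps $d_i$, the leaves of $T$ carry exactly the data $d_i+1$, giving a tree of the required type.

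For the reverse implication, which I expect to be the harder direction, I would realize the abstract tree geometrically and then build the cover on it. First, the combinatorial and metric data determine a semistable model of $\mathbb{P}^1_R$: a tree of $\mathbb{P}^1$'s glued along annuli of the prescribed thicknesses. Next I would construct the cover component-by-component, placing on each vertex-component an Artin--Schreier cover whose ramification is dictated by $\omega_v$, and then patch these local covers along the edge-annuli by solving the Artin--Schreier gluing congruences so that the refined Swan conductors match; here Henrio's technique, transported into equal characteristic, does the real work. Finally I would contract the semistable model to recover a flat $\mathbb{Z}/p$-cover $Y \to \mathbb{P}^1_R$ and check that its special fiber is the Artin--Schreier cover branched only at $b$ with jump $d$ (read off from the root) and that its generic fiber is branched at the $r$ points specializing to $b$ with jumps $d_i$ (read off from the leaves).

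The main obstacle is this patching step. One must produce, over the whole semistable model simultaneously, a single global $\mathbb{Z}/p$-cover whose local Artin--Schreier data are compatible across every edge and whose generic fiber has exactly the prescribed branch locus, rather than merely disjoint component-by-component solutions. Controlling the refined Swan conductors all at once — ensuring the forms $\omega_v$ fit together with the correct residues and pole orders, so that the balance $\sum_i(d_i+1)=d+1$ is realized both locally at each vertex and globally — is the delicate heart of the argument, and is exactly where the combination of Kato's refined-Swan-conductor formalism with the Maugeais--Sa\"{i}di degeneration results must be brought to bear.
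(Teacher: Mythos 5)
Your overall architecture matches the paper's: the forward direction is carried out exactly as you describe (build the semistable model of the disc marked by the branch points, read off depths, differential Swan conductors and thicknesses, and verify the Hurwitz axioms via the vanishing-cycle formula and the piecewise-linearity of the depth -- this is Proposition \ref{propactiontotree}), and the reverse direction proceeds by realizing vertices, edges and leaves separately and gluing along boundaries with Henrio's lemmas (Theorem \ref{theoremtreeaction}). However, there is a genuine gap in your reverse implication. A Hurwitz tree of type $\{d_1+1,\ldots,d_r+1\}$ is permitted to have \emph{positive} depth $\delta_{v_0}>0$ at the root. If you realize such a tree directly, the resulting cover of the disc has radical (inseparable) reduction, not an \'etale reduction branched at $b$ with jump $d$; so your final step ``contract and check that the special fiber is the Artin--Schreier cover branched only at $b$'' fails for exactly those trees. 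The realization theorem only yields a deformation when the tree has depth zero (Proposition \ref{propgooddefthenhurwitz}, Corollary \ref{corgoodtreedeformation}), and the theorem as stated does not assume depth zero.

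The paper closes this gap with an extra combinatorial step you do not mention: Proposition \ref{propchaindiffforms} shows that the existence of \emph{any} Hurwitz tree of the given type is equivalent to the existence of a chain of refinements of partitions together with exact differential forms $dx/\prod_j(x-P_{i,j})^{l_{i,j}}$, and from such a chain one can always reassemble a Hurwitz tree of the same type \emph{with depth zero}, to which the realization theorem then applies. Without this step (or some substitute, e.g.\ an argument that one may uniformly shift all depths by $-\delta_{v_0}$ while preserving the axioms), the ``if'' direction only proves the statement for depth-zero trees, which is strictly weaker than the theorem. A secondary, smaller omission: verifying that the realized cover's special fiber is actually \emph{smooth} at the point above $b$ (not merely generically \'etale) requires the different criterion or the vanishing-cycle formula together with the conductor balance $\sum_i(d_i+1)=d+1$; your plan asserts the check but does not indicate the mechanism.
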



\textit{Type}, which will be defined in Definition \ref{defhurwitztree}, is a combinatorial invariant of a Hurwitz tree. Applying the Hurwitz tree criteria, we improve Theorem \ref{thmconnected} as below.

\begin{theorem}
\label{improveconnectedness}
\begin{itemize}
    \item When $p=5$, $\mathcal{AS}_g$  is connected if and only if $g \ge 14$ or $g=0,2$.
    \item  When $p>5$, $\mathcal{AS}_g$ is disconnected if $(p-1)/2 < g \le (p-1)(p-2)$.
\end{itemize}
\end{theorem}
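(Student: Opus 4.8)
The plan is to convert the topology of $\mathcal{AS}_g$ into a purely combinatorial question about Hurwitz trees and then read off (dis)connectedness from an existence threshold. Recall that an Artin-Schreier cover branched at $r$ points with jumps $d_1,\dots,d_r$ has genus $g=\tfrac{p-1}{2}\big(\sum_{i=1}^r(d_i+1)-2\big)$, so fixing $g$ is the same as fixing the total $e:=\sum_i(d_i+1)=\tfrac{2g}{p-1}+2$. By \cite{MR2985514} the irreducible components of $\mathcal{AS}_g$ are indexed by the \emph{types} $\{e_1,\dots,e_r\}$, i.e. multisets with $e_i=d_i+1\ge 2$, $e_i\not\equiv 1\pmod p$ and $\sum_i e_i=e$; each such stratum is irreducible, hence connected. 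The first step is to show that two strata lie in the same connected component of $\mathcal{AS}_g$ exactly when their types are joined by a chain of elementary refinements, a single refinement replacing one part $e'$ of a type by a partition of $e'$. Indeed, Theorem \ref{theoremmain}, applied around each branch point of the special fibre, produces a flat family over $k[[t]]$ — a connected one-parameter family in $\mathcal{AS}_g$, the genus being preserved because $e$ is conserved — whose special and generic fibres realise the coarser and the finer type, and it does so precisely when a Hurwitz tree of the type of that refinement exists. This identifies the connectedness of $\mathcal{AS}_g$ with that of the graph $\Gamma_e$ whose vertices are the types and whose edges are the admissible single-part refinements.

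The combinatorial heart is then an existence threshold, which I would isolate as the lemma: for $r\ge 2$, a Hurwitz tree of type $\{e_1,\dots,e_r\}$ exists only if $\sum_{i=1}^r e_i\ge 2p-1$. Granting this, every edge of $\Gamma_e$ must refine (or coarsen) along a part of size at least $2p-1$. Consequently, if $e\le 2p-2$ then neither any single part nor any subset-sum can reach $2p-1$, so $\Gamma_e$ has no edges at all and each type is its own connected component.

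For the disconnectedness statements this is decisive. When $p>5$ and $\tfrac{p-1}{2}<g\le (p-1)(p-2)$ we have $4\le e\le 2p-2<2p-1$; a short check shows that for $p\ge 5$ and $e\ge 4$ there are always at least two valid types (take $\{e\}$ together with $\{2,e-2\}$ when $e\not\equiv 1,3\pmod p$, and otherwise two short two- or three-part types such as $\{2,p-1\}$ and $\{3,p-2\}$ when $e\equiv 1\pmod p$), so $\Gamma_e$ is a discrete set of at least two vertices and $\mathcal{AS}_g$ is disconnected. The same mechanism gives disconnectedness for $p=5$ whenever $e\le 8$, that is $g\in\{4,6,8,10,12\}$. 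Combined with Theorem \ref{thmconnected}, which supplies connectedness for $g\ge 14$ and for $g=0,2$ (where $e\le 3$ forces a single type), this yields the asserted equivalence that $\mathcal{AS}_g$ is connected if and only if $g\ge 14$ or $g\in\{0,2\}$.

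The main obstacle is the lemma, and in fact only its non-existence direction is used above. I would prove it by analysing the degeneration datum carried by the tree: on the component $X_0\cong\mathbb{P}^1$ separating the $r$ branch points, the reduction of Kato's refined Swan conductor is a meromorphic differential with poles of order $e_1,\dots,e_r$ at the leaf-nodes, so its total zero order on $\mathbb{P}^1$ equals $\sum_i e_i-2$. Positivity of the depth of the edge running toward the root, together with the compatibility of the refined Swan conductor across that edge, forces this differential to vanish at the root-node to order at least $2p-3$; since the available zero order is $\sum_i e_i-2$, this gives $\sum_i e_i\ge 2p-1$. Making the vanishing bound exact — pinning down the precise relation between the depth, the refined Swan conductor, and the order of vanishing at the root-node in the equal-characteristic setting of \cite{MR2015076} and \cite{MR2377173} — is the one genuinely delicate point; the remaining steps are the combinatorial bookkeeping described above.
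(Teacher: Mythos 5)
Your reduction of the topology of $\mathcal{AS}_g$ to a combinatorial graph on types is in line with the paper (this is the graph $C_d$ of \S 2.3.1 together with Propositions \ref{propdeformclosure}, \ref{propclosurestratum} and \ref{propreduce}), but the key lemma you isolate is false, and the proof collapses at that point. You claim that a Hurwitz tree of type $\{e_1,\dots,e_r\}$ with $r\ge 2$ exists only if $\sum_i e_i\ge 2p-1$. Take $p=5$ and the type $\{4,3\}$: here $\sum e_i=7<9=2p-1$, yet Example \ref{exfirst} exhibits an explicit flat deformation of type $[7]\to[4,3]^{\top}$, whose associated Hurwitz tree has type $\{4,3\}$ (equivalently, $\frac{dx}{x^4(x-1)^3}$ is exact, as used in Example \ref{excalculatehurwitzmain}). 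The correct non-existence threshold is Proposition \ref{propfirstcombinatorial}: an exact form $\frac{dx}{\prod_{i=1}^n(x-P_i)^{h_i}}$ with $1<h_i<p$ requires $\sum_i h_i\ge p+n$, a bound that depends on the \emph{number of parts} $n$, not a fixed constant $2p-1$. Consequently your assertion that $\Gamma_e$ has no edges whenever $e\le 2p-2$ is wrong (for $p=5$, $e=7$ the vertex $\{7\}$ is joined to $\{4,3\}$), and the proposed mechanism for proving the lemma — a forced vanishing of order at least $2p-3$ at the root node — has no basis: by condition (H4) and Lemma \ref{lemmarootconductor} the order of vanishing there is exactly $\sum_i e_i-2$, which is automatic for a differential on $\mathbb{P}^1$ with the prescribed poles and imposes no inequality of the kind you need.

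The statement can still be proved, but by a more careful choice of which stratum to isolate, which is what the paper does. For $\frac{(p-1)^2}{2}<g\le(p-1)(p-2)$, i.e.\ $p+3\le d+2\le 2p-2$, one looks at the \emph{finest} codimension-zero stratum, indexed by $\{2,\dots,2\}$ or $\{3,2,\dots,2\}$ according to parity. Every sub-multiset of $s\ge 2$ of its parts has sum $<p+s$ precisely because the total is at most $2p-2$, so by Proposition \ref{propcheckclosure} (via Proposition \ref{propfirstcombinatorial}) that stratum is closed; being of codimension zero and not contained in the closure of any other stratum, it is a connected component on its own, while other strata exist, giving disconnectedness. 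Your argument happens to reach the right conclusion in the range considered, but only because the false lemma over-counts isolated vertices; as stated it would also "prove" that $\{7\}$ and $\{4,3\}$ lie in different components of $\mathcal{AS}_{10}$ for $p=5$, which contradicts the paper's own examples.
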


\begin{remark}
The cases that remain unknown are when $p>5$ and $(p-1)(p-2)<g \le (p^3-2p^2+p-8)(p-1)/8$. Note that when $p$ and $g$ are fixed, one can apply Theorem \ref{theoremmain} and the computation strategy from Section \ref{secgeometryASg} to obtain a full pictures of how the irreducible components of $\mathcal{AS}_g$ fit together, hence also determine the connectedness of $\mathcal{AS}_g$. Our current program and computers cannot handle the calculations when $p$ and $g$ are large, though.  
\end{remark}

\subsection{Structure}

Section \ref{sectAS} gives a quick overview of Artin-Schreier theory, the moduli space of Artin-Schreier covers of fixed genus, and how to partition the space by the branching data of the points. In the same section, we link the geometry of $\mathcal{AS}_g$ with the deformation of $\mathbb{Z}/p$-covers (of genus $g$), and reduce Theorem \ref{theoremmain} to a local one (Theorem \ref{theoremmainlocal}). In \S \ref{seccoverdisc}, we examine the degeneration of {\'e}tale $\mathbb{Z}/p$-torsors on a disc using the language of Kato's refined Swan conductors. They are crucial to our approach, and distinguish this paper from \cite{DANG2020398}. Section \ref{sectionHurwitz} introduces the notion of Hurwitz tree and describes how to derive such a tree from a given $\mathbb{Z}/p$-deformation (\S \ref{seccovertotree}), thus proving the forward direction of Theorem \ref{theoremmain}. Section \ref{sechurwitztocover} considers the inverse process of \S \ref{seccovertotree}, constructing a $\mathbb{Z}/p$-deformation from a given Hurwitz tree, hence completing the proof of Theorem \ref{theoremmain}. We then recover some known results about deformations using the new technique in \S \ref{secconnection}. Finally, the proof of Theorem \ref{improveconnectedness} is given in \S \ref{secproofofimproveconnectedness}.

\begin{acknowledgements}
\label{secacknowledge}
The author would like to express his great gratitude to his advisor, Andrew Obus, for his guidance and very careful reading of the draft of this paper. He thanks Florian Pop for useful suggestions and proofreading an earlier version of this paper. He thanks Gjergji Zaimi, Fedor Petrov, Felipe Voloch, and many people who joined the discussion on this thread \url{https://mathoverflow.net/questions/310575/residues-of-frac1-prod-i-1n-x-p-ie-i}. Finally, he thanks the anonymous referees for their careful reading and their useful comments. This work is funded by the Simons Foudnation Grant Targeted for Institute of Mathematics, Vietnam Academy of Science and Technology, and NSF-DMS grants 1602054 and 1900396. 
\end{acknowledgements}

\section{Artin-Schreier Covers}
\label{sectAS}
\subsection{Artin-Schreier Theory}
Let $K$ be a field of characteristic $p>0$. If $L$ is a separable extension of $K$ of degree $p$, then the classical Artin-Schreier theory says that $L=K(\alpha)$ where $\alpha$ is a root of a polynomial equation $y^p-y=a$ for some $a \in K$ (see \cite[Section VI.6]{MR1878556}).

Let $Y$ be an Artin-Schreier $k$-curve. Then, by definition, there is a $\mathbb{Z}/p$-cover $\phi$: $Y \rightarrow \mathbb{P}_k^1$ with an affine equation of the form 
\begin{equation}
\label{eqnAS}
    y^p-y=f(x)
\end{equation}
\noindent for some non-constant rational function $f(x)\in k(x)$. Equation (\ref{eqnAS}) is called an \textit{Artin-Schreier equation}. Furthermore, a cover $\phi': Y' \rightarrow \mathbb{P}_k^1$ defined by $y^p-y=g(x)$ is isomorphic to $\phi$ if and only if $g(x)=f(x)+l(x)^p-l(x)$ for some $h(x) \in K(x)$. We call the map $\wp$ sending $l(x) \mapsto l(x)^p-l(x)$ the \textit{Artin-Schreier operator}. At each ramification point, there is a filtration of \textit{ramification groups in upper numbering} \cite[IV]{MR554237}. In our case, as the inertia group is $\mathbb{Z}/p$, the filtration has only one jump, which we call the \textit{ramification jump} at the corresponding branch point.  

\begin{example}
\label{exAS}
Suppose $p=5$. The cover of $Y$ of $\mathbb{P}^1_K$ defined by
\begin{equation}
\label{eqnASex}
    y^5-y=\frac{1}{x^5}+\frac{1}{(x-1)^2},
\end{equation}
\noindent is an Artin-Schreier curve. Note that the term $1/x^5$ is a $5$th-power. Hence, by the above discussion, one may add $\wp((-1/x)=(-1/x)^5-(-1/x)$ to the right-hand-side of (\ref{eqnASex}). The result is an Artin-Schreier equation of the form
\begin{equation}
\label{eqnASexreduced}
    y^5-y=-\frac{1}{x}+\frac{1}{(x-1)^2}.
\end{equation}
\end{example}
\begin{remark}
We say the Artin-Schreier equation (\ref{eqnASexreduced}) has \textit{reduced form}. That means the partial fraction decomposition of the right-hand-side of the equation only consists of terms of prime-to-$p$ degree. When $k$ is algebraically closed, any Artin-Schreier cover can be represented by a rational function $f(x)$ of reduced form. 
\end{remark}

 The rational function $f(x)$ in (\ref{eqnAS}) tells us everything about the ramification data of the cover $\phi$. Suppose $f(x)$ has $r$ poles $\mathcal{B}:=\{B_1,\ldots,B_{r}\}$ on $\mathbb{P}^1_k$. Let $d_j$ be the order of the pole of $f(x)$ at $B_j$. One may assume that $f(x)$ is in reduced form. Hence, the number $d_j$ is prime to $p$. It is an easy exercise to show that $\mathcal{B}$ is the collection of branch points of $\phi$, and $d_j$ is the ramification jump at $B_j$. We call $h_j:=d_j+1$ the \textit{conductor} at $B_j$. Then $h_j \ge 2$ and $h_j \not\equiv 1 \bmod p$. Moreover, the ramification divisor of $\phi$ is
\begin{equation}
\label{eqnramificationdivisorASW}
    D:=\sum_{j=1}^{r} (p-1)h_j Q_j
\end{equation} 
where $Q_j$ is the ramification point above $B_j$ (\cite{MR554237}, IV, Proposition 4). Hence, the degree of the different is $\sum_{j=1}^r(p-1)h_j$. Applying the Riemann-Hurwitz formula (\cite{MR0463157}, IV, Corollary 2.4), we obtain the following lemma. 

\begin{lemma} [{\cite[Lemma 2.6]{MR2985514}}]
\label{lemmagenus}
The genus of $Y$ is $g_Y=((\sum_{j=1}^{r}h_j)-2)(p-1)/2$. 
\end{lemma}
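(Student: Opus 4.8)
The plan is to obtain the genus by feeding the already-computed different into the Riemann–Hurwitz formula. Since the genuinely substantive step — determining the ramification divisor $D$ in (\ref{eqnramificationdivisorASW}), and hence the degree of the different of $\phi$ — is quoted from Serre and recorded just above the statement, what remains is essentially a one-line substitution. Accordingly, I do not expect a serious obstacle; the only delicate point is a matter of conventions, discussed at the end.

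Concretely, I would apply the Riemann–Hurwitz formula to the separable degree-$p$ morphism $\phi\colon Y \to \mathbb{P}^1_k$ of smooth projective curves over the algebraically closed field $k$ (\cite{MR0463157}, IV, Corollary 2.4):
\[
2g_Y - 2 = p\bigl(2g_{\mathbb{P}^1_k} - 2\bigr) + \deg \mathfrak{d},
\]
where $\mathfrak{d}$ denotes the different of $\phi$. Because $\phi$ is unramified away from the branch points $B_1, \ldots, B_r$, the different is supported on the fibers over them; by (\ref{eqnramificationdivisorASW}) and the ensuing computation of the degree of the different, one has $\deg \mathfrak{d} = \deg D = (p-1)\sum_{j=1}^{r} h_j$.

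Substituting $g_{\mathbb{P}^1_k} = 0$ then gives $2g_Y - 2 = -2p + (p-1)\sum_{j=1}^{r} h_j$. Rewriting $-2p + 2 = -2(p-1)$ and factoring out $(p-1)$ yields $2g_Y = (p-1)\bigl(\sum_{j=1}^{r} h_j - 2\bigr)$, so dividing by $2$ produces the asserted formula $g_Y = \bigl(\bigl(\sum_{j=1}^{r} h_j\bigr) - 2\bigr)(p-1)/2$. The one place where care is nominally required is the matching of ramification conventions: the jumps $d_j$ are recorded in upper numbering, whereas Serre's different exponent is naturally phrased in lower numbering. However, for a $\mathbb{Z}/p$-extension the filtration has a single jump and $G_0 = \mathbb{Z}/p$, so the Herbrand function is the identity up to that jump and the two numberings coincide; this is precisely what validates the identification $\deg \mathfrak{d} = \deg D$ used above, and the formula follows.
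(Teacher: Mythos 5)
Your proposal is correct and follows exactly the route the paper takes: it substitutes the degree of the different $\deg \mathfrak{d} = (p-1)\sum_{j=1}^{r} h_j$, computed from the displayed ramification divisor $D$ in (\ref{eqnramificationdivisorASW}), into the Riemann--Hurwitz formula with $g_{\mathbb{P}^1_k}=0$ and simplifies. Your added remark that the single jump of a $\mathbb{Z}/p$-extension makes the upper and lower numbering coincide is accurate and harmless, though the paper leaves it implicit.
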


\begin{definition}
\label{defbranchingdatum}
 We call the $r \times 1$ matrix $[h_1, \ldots, h_r]^{\top}$ the \textit{branching datum} of $\phi$. For instance, the cover in Example \ref{exAS} has branching datum $[2,3]^{\top}$. Throughout the paper, we define $d$ by $g=d(p-1)/2$.  So, we have the identities:
$$d=\big(\sum_{j=1}^{r}h_j\big)-2 \text{ and } \sum_{j=1}^{r}h_j=d+2=2g/(p-1)+2.$$ 
\end{definition}

\begin{remark}
\label{remarksumofconductors}
The above lemma shows that all the Artin-Schreier $k$-curves with the same genus $g$ have the same sum of conductors $d+2$. That is the essential difference between an Artin-Schreier cover and a $\mathbb{Z}/p$-cover in characteristic different from $p$. For each $\mathbb{Z}/p$-tamely-ramified-cover, a branch point contributes $p-1$ to the degree of its ramification divisor. We thus say a branch point of a wildly ramified cover contributes more to the degree of the different. Therefore, every $\mathbb{Z}/p$-tamely-ramified-cover of genus $g$ must have the same number of branch points. It hence makes sense to group Artin-Schreier covers of the same genus by their branching data. This idea is utilized by Pries and Zhu in \cite{MR2985514}, and will be discussed in the next section.
\end{remark}

\begin{remark}
\label{remarkdeformationASW}
When the Galois group is $\mathbb{Z}/p^n$, the Artin-Schreier theory is generalized by the Artin-Schreier-Witt theory \cite[\S 26]{MR2371763} \cite[\S 4]{MR1878556}. It says that a $\mathbb{Z}/p^n$-cover of $\mathbb{P}^1_k$ is determined by some certain length-$n$-Witt-vector over $K$. We will discuss the deformations of this family of covers in a forthcoming paper. 
\end{remark}

\subsection{Deformations of Artin-Schreier covers}


Suppose $C \xrightarrow{\phi} \mathbb{P}^1_k$ is a $G$-Galois cover over $k$, where $C$ is a smooth, projective, connected $k$-curve. Suppose, moreover, that $A$ is a Noetherian, complete $k$-algebra with residue field $k$. Let
\[ \Def_{\phi} : \Alg/k \xrightarrow{} \text{Set} \]
\noindent be the functor which to any $A, f: A \xrightarrow{} k \in \Alg/k$ associates classes of $G$-Galois covers $\mathscr{C} \xrightarrow{\psi} \mathbb{P}^1_A$ that make the following cartesian diagram commute 

\begin{equation}
\label{defndef}
     \begin{tikzcd}
C \arrow{d}{\phi} \arrow{r}{}
& \mathscr{C} \arrow{d}{\psi} \\
\mathbb{P}^1_k \arrow{r}[blue]{} \arrow[black]{d}{} & \mathbb{P}^1_A \arrow[black]{d}{}\\
\spec k \arrow[black]{r}{f} & \spec A,
\end{tikzcd}
\end{equation}

\noindent and so that the $G$-action on $\mathscr{C}$ induces the original action on $C$. We say $\psi$ is a \textit{deformation} of $\phi$ over $A$, or $\phi$ is deformed (over $A$) to $\psi$. For more details, see \cite[\S 2]{MR1767273}. In this paper, we focus on the case where $G=\mathbb{Z}/p$ and $A=k[[t]]$.

\begin{remark}
One unique aspect of characteristic $p$ is that there exist flat deformations of a wildly ramified cover over rings of equal characteristic so that the number of branch points changes, but the genus does not. That gives us a way to investigate a cover in characteristic $p$: finding a connection of it with a slightly different one via equal characteristic deformation. For example, using some (equal characteristic) deformations of $\mathbb{Z}/p^n$-covers \cite[Lemma 3.2]{MR3194816}, Pop reduced the lifting problem for cyclic groups (a.k.a., the \textit{Oort conjecture}) to the case that had been solved by Obus and Wewers in \cite{MR3194815}. Another perk of studying these deformations is understanding the geometry of the moduli space that parameterizes Galois covers. That will be discussed further in \S \ref{secmoduli}.
\end{remark}

\begin{remark}
\label{remarktype}
Suppose $\phi: Y \xrightarrow{} \mathbb{P}^1_k$ is a $\mathbb{Z}/p$-cover that branched at $r$ points $\{P_1, \ldots, P_r\}$ with conductor $h_i$ at $P_i$. Suppose, moreover, that $\psi$ is a smooth deformation of $\phi$ over a complete discrete valuation ring of \textit{equal characteristic} whose generic fiber $\psi_{\eta}$ has branch locus $\{P_{1,1}, \ldots, P_{1,m_1}, \ldots, \allowbreak P_{r,1}, \allowbreak \ldots, P_{r,m_r} \}$, where each $P_{i,j}$ reduces to $P_i$ and has conductor $h_{i,j}$. Then it follows from the discussion in Remark \ref{remarksumofconductors} that $\sum_{i=1}^r\sum_{j=1}^{m_j} h_{i,j}=\sum_{i=1}^r h_i$. We say the deformation $\psi$ has \textit{type}
\[ [h_1, \ldots, h_r]^{\top} \xrightarrow{} [h_{1,1}, \ldots, h_{1,m_1}, h_{2,1}, \ldots, h_{r,1}, \ldots, h_{r,m_r}]^{\top}.  \]
\end{remark}

\subsection{The moduli space of Artin-Schreier covers and their deformations}
\label{secmoduli}
In \cite{MR2985514}, the authors introduce the moduli space of Artin-Schreier $k$-curves of genus $g$, which they denote by $\mathcal{AS}_g$. They enumerate a family of locally closed strata of $\mathcal{AS}_g$ by partitions of the integer $d+2$ such that no entries are congruent to $1$ modulo $p$, i.e., all the possible branching data of Artin-Schreier curves of genus $g$. We call the collection of those partitions $\Omega_{d+2}$. For instance, the partition $\overrightarrow{E}=\{h_1,\ldots,h_r\}$  of $d+2$ is associated with the stratum $\Gamma_{\overrightarrow{E}}$, which is the collection of all the points of $\mathcal{AS}_g$ that represent Artin-Schreier curves with branching datum $[h_1, \ldots, h_r]^{\top}$. We write $\overrightarrow{E}_1 \prec \overrightarrow{E}_2$ if the latter one is a refinement of the former one.

\begin{example}
Suppose $p=5$ and $g=14$. Then $d=7$, and the strata of $\mathcal{AS}_g$ correspond to the following partitions of $d+2$: $\{9\}, \{7,2\}, \{5,4\}, \{5,2,2\}, \{4,3,2\}, \{3,3,3\}$, and $\{3,2,2,2\}$.
\end{example}

The following result shows that one can relate the geometry of $\mathcal{AS}_g$ with the existence of equal characteristic deformations between curves in different strata (hence have distinct branching data). We say there \textit{always} exist deformations of type $[\overrightarrow{E}_1]^{\top} \xrightarrow{} [\overrightarrow{E}_2]^{\top}$ if, given an arbitrary Artin-Schreier curve of branching datum $[\overrightarrow{E}_1]^{\top}$, we can deform it to one with branching datum $[\overrightarrow{E}_2]^{\top}$.

\begin{proposition}[{\cite[Proposition 3.4]{DANG2020398}}]
\label{propdeformclosure}
Suppose $\overrightarrow{E}_1$ and $\overrightarrow{E}_2$ are two partitions of $d+2$. Then the stratum $\Gamma_{\overrightarrow{E}_1}$ is contained in the closure of $\Gamma_{\overrightarrow{E}_2}$ if and only if there exists a deformation over $k[[t]]$ from a point in $\Gamma_{\overrightarrow{E}_1}$ to one in $\Gamma_{\overrightarrow{E}_2}$. In particular, there always exist deformations of type $[\overrightarrow{E}_1]^{\top} \xrightarrow{} [\overrightarrow{E}_2]^{\top}$ if one can find a deformation of such type.
\end{proposition}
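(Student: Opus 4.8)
The plan is to translate the closure relation in the coarse moduli space $\mathcal{AS}_g$ into the existence of a family over the trait $\spec k[[t]]$, using the valuative criterion together with the Pries--Zhu stratification of $\mathcal{AS}_g$ by branching data. I isolate three ingredients. The first is that any deformation $\psi$ over $k[[t]]$, being a flat family of smooth projective $\mathbb{Z}/p$-covers, induces a morphism $\spec k[[t]] \to \mathcal{AS}_g$ (as $\mathcal{AS}_g$ corepresents the moduli functor); the generic point $\eta$ is sent into $\Gamma_{\overrightarrow{E}_2}$ and the closed point into $\Gamma_{\overrightarrow{E}_1}$. Since $\spec k[[t]]$ is integral and its closed point is a specialization of $\eta$, the image of the closed point is a specialization of a point of $\Gamma_{\overrightarrow{E}_2}$, hence lies in $\overline{\Gamma_{\overrightarrow{E}_2}}$. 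Thus a single deformation already places one point of $\Gamma_{\overrightarrow{E}_1}$ in the closure.

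The second ingredient is the converse spreading-out: given $\xi_1 \in \Gamma_{\overrightarrow{E}_1} \subseteq \overline{\Gamma_{\overrightarrow{E}_2}}$, I would choose an integral curve $Z \subseteq \overline{\Gamma_{\overrightarrow{E}_2}}$ through $\xi_1$ whose generic point lands in $\Gamma_{\overrightarrow{E}_2}$, normalize it, and localize at a point above $\xi_1$ to obtain a trait with completion $k[[t]]$ (using $k = \bar{k}$, so that the residue field is unchanged and a finite base change can be absorbed) together with a map to $\mathcal{AS}_g$ hitting $\Gamma_{\overrightarrow{E}_1}$ at the closed point and $\Gamma_{\overrightarrow{E}_2}$ at the generic point. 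The desired deformation is then the pullback of the tautological family along this trait, its genus being constant by flatness and hence its generic and special branching data being partitions of the same integer $d+2$ by Lemma \ref{lemmagenus}.

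The step I expect to be the main obstacle is precisely this pullback, because $\mathcal{AS}_g$ is only a \emph{coarse} moduli space: Artin--Schreier curves carry the $\mathbb{Z}/p$-action and often larger automorphism groups, so there is no tautological family over $\mathcal{AS}_g$ and a map to the coarse space need not lift to the moduli problem. I would circumvent this by working on the Hurwitz stack of $\mathbb{Z}/p$-covers of $\mathbb{P}^1$, or equivalently by rigidifying (marking the branch points together with a coordinate on $\mathbb{P}^1$), where a tautological family does exist; after pulling back over the trait one descends the resulting $\mathbb{Z}/p$-cover of $\mathbb{P}^1_{k[[t]]}$ to an honest deformation of the curve represented by $\xi_1$, checking that the rigidifying data can be chosen compatibly at the closed and generic points.

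The third ingredient, which upgrades a single deformation to a statement about the entire stratum, is the structural fact that $\{\Gamma_{\overrightarrow{E}}\}_{\overrightarrow{E} \in \Omega_{d+2}}$ is a finite stratification whose strata are irreducible and whose closures are unions of strata; this I would extract from the Pries--Zhu description together with the observation that a specialization of an Artin--Schreier cover can only make branch points coalesce, coarsening the partition while preserving the sum $d+2$. Granting these three ingredients, the equivalence assembles as follows: if $\Gamma_{\overrightarrow{E}_1} \subseteq \overline{\Gamma_{\overrightarrow{E}_2}}$ then the second ingredient applied to any $\xi_1$ produces a deformation, while if some deformation exists then the first ingredient puts a point of $\Gamma_{\overrightarrow{E}_1}$ in $\overline{\Gamma_{\overrightarrow{E}_2}}$ and the third forces all of $\Gamma_{\overrightarrow{E}_1}$ into the closure. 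The ``in particular'' claim is then immediate: one deformation of type $[\overrightarrow{E}_1]^{\top} \to [\overrightarrow{E}_2]^{\top}$ yields $\Gamma_{\overrightarrow{E}_1} \subseteq \overline{\Gamma_{\overrightarrow{E}_2}}$, whence the second ingredient supplies a deformation starting from an arbitrary curve with branching datum $[\overrightarrow{E}_1]^{\top}$.
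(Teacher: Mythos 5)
First, note that the paper itself does not prove Proposition \ref{propdeformclosure}: it is imported verbatim from \cite[Proposition 3.4]{DANG2020398}, so your attempt can only be judged against the logical architecture the paper builds around it. Your first two ingredients are sound and are essentially the expected argument: a deformation over $k[[t]]$ induces a map from the trait to the coarse space sending the closed point into the closure of the image of the generic point, and conversely a point of $\Gamma_{\overrightarrow{E}_1}$ lying in $\overline{\Gamma}_{\overrightarrow{E}_2}$ is witnessed by a trait mapping generically into $\Gamma_{\overrightarrow{E}_2}$. You also correctly identify the coarse-versus-fine moduli obstruction and the standard repair (rigidify, work on the Hurwitz space, absorb the finite base change using $k=\bar{k}$).

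The genuine gap is your third ingredient. The observation that specialization can only coalesce branch points gives the \emph{upper} bound $\overline{\Gamma}_{\overrightarrow{E}_2}\subseteq\bigcup_{\overrightarrow{E}\preceq\overrightarrow{E}_2}\Gamma_{\overrightarrow{E}}$; it does not give the all-or-nothing property that $\overline{\Gamma}_{\overrightarrow{E}_2}\cap\Gamma_{\overrightarrow{E}_1}\neq\emptyset$ forces $\Gamma_{\overrightarrow{E}_1}\subseteq\overline{\Gamma}_{\overrightarrow{E}_2}$. Irreducibility of $\Gamma_{\overrightarrow{E}_1}$ does not help either: a nonempty closed subset of an irreducible space need not be the whole space. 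Worse, the all-or-nothing property is precisely Proposition \ref{propclosurestratum}, which in the paper's architecture is a \emph{separate} consequence of the present proposition, so invoking it as an input is circular. The missing idea is the one the paper records as Propositions \ref{localAS} and \ref{propreduce}: a deformation of type $[\overrightarrow{E}_1]^{\top}\to[\overrightarrow{E}_2]^{\top}$ decomposes into local deformations of the formal germs at the branch points, and each such germ is determined up to isomorphism by its conductor alone. Hence the local pieces of one witnessing deformation can be transplanted onto an \emph{arbitrary} cover with branching datum $[\overrightarrow{E}_1]^{\top}$, which is what upgrades ``one deformation exists'' to ``deformations always exist'' and thereby puts every point of $\Gamma_{\overrightarrow{E}_1}$ (not just one) into $\overline{\Gamma}_{\overrightarrow{E}_2}$ via your first ingredient. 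Without this local-to-global transplantation step, both the ``if'' direction of the equivalence and the ``in particular'' clause remain unproved.
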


Furthermore, the next result shows that the closure of a stratum is simply a union of strata.

\begin{proposition}[{\cite[Corollary 3.6]{DANG2020398}}]
\label{propclosurestratum}
Let $\overrightarrow{E}_1$ and $\overrightarrow{E}_2$ be as in the previous proposition. Suppose $\Gamma_{\overrightarrow{E}_1}$ is not in the closure of $\Gamma_{\overrightarrow{E}_2}$. Then $\Gamma_{\overrightarrow{E}_1}$ is disjoint from the closure of $\Gamma_{\overrightarrow{E}_2}$.
\end{proposition}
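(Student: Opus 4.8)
Given Proposition \ref{propdeformclosure}, the statement is short, and the plan is to prove its contrapositive: assuming $\Gamma_{\overrightarrow{E}_1}$ meets the closure $\overline{\Gamma_{\overrightarrow{E}_2}}$, I will show that $\Gamma_{\overrightarrow{E}_1}$ is in fact contained in $\overline{\Gamma_{\overrightarrow{E}_2}}$. The whole argument rests on converting the purely topological hypothesis ``$x \in \overline{\Gamma_{\overrightarrow{E}_2}}$'' into the existence of an honest $k[[t]]$-deformation, after which Proposition \ref{propdeformclosure} can simply be invoked.

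First I would pick a $k$-point $x \in \Gamma_{\overrightarrow{E}_1} \cap \overline{\Gamma_{\overrightarrow{E}_2}}$. Since the stratum $\Gamma_{\overrightarrow{E}_2}$ is locally closed, it is dense in its own closure, so the generic point $\eta$ of an irreducible component of $\overline{\Gamma_{\overrightarrow{E}_2}}$ passing through $x$ already lies in $\Gamma_{\overrightarrow{E}_2}$, and $x$ is a specialization of $\eta$. Because $\mathcal{AS}_g$ is of finite type over the algebraically closed field $k$ and $x$ is a closed point, this specialization is realized by a trait: choosing a one-dimensional integral subscheme of that component joining $\eta$ to $x$, normalizing, and localizing and completing at a point over $x$, one obtains a morphism $\spec k[[t]] \to \mathcal{AS}_g$ carrying the closed point to $x$ and the generic point to a point above $\eta$.

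The $\mathbb{Z}/p$-cover of $\mathbb{P}^1_{k[[t]]}$ classified by this trait is then a deformation over $k[[t]]$ whose special fiber is the curve represented by $x$, hence has branching datum $\overrightarrow{E}_1$, and whose generic fiber has branching datum $\overrightarrow{E}_2$; in the terminology of Remark \ref{remarktype} it is a deformation of type $\overrightarrow{E}_1 \to \overrightarrow{E}_2$. Feeding this deformation into the ``if'' direction of Proposition \ref{propdeformclosure} (equivalently, invoking its ``always'' clause, which propagates the single deformation to every point of the stratum) yields $\Gamma_{\overrightarrow{E}_1} \subseteq \overline{\Gamma_{\overrightarrow{E}_2}}$. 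Taking the contrapositive gives exactly the assertion: if $\Gamma_{\overrightarrow{E}_1} \not\subseteq \overline{\Gamma_{\overrightarrow{E}_2}}$, then $\Gamma_{\overrightarrow{E}_1}$ is disjoint from $\overline{\Gamma_{\overrightarrow{E}_2}}$.

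I expect the only delicate point to be the second step, namely producing the $k[[t]]$-trait realizing the specialization and identifying the family it classifies with a genuine deformation in the sense of diagram (\ref{defndef}). Over a coarse moduli space a universal family need not exist, so strictly one must either work with the moduli stack and lift the trait through an \'etale atlas, or construct the family by hand over a suitable cover and descend; once this is settled the remainder is a formal application of Proposition \ref{propdeformclosure} and involves no further geometry.
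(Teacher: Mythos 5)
Your argument is correct and is essentially the expected one: the paper itself does not reprove this statement but cites it from \cite[Corollary 3.6]{DANG2020398}, where it follows from Proposition \ref{propdeformclosure} by exactly the contrapositive you describe (a point of $\Gamma_{\overrightarrow{E}_1}\cap\overline{\Gamma}_{\overrightarrow{E}_2}$ yields a trait, hence a $k[[t]]$-deformation of type $\overrightarrow{E}_1\to\overrightarrow{E}_2$, and the ``always'' clause of Proposition \ref{propdeformclosure} then forces the full containment). The one delicate point you flag --- lifting the trait from the coarse space to an actual family, possibly after a finite extension of $k[[t]]$, which is harmless here since $k$ is algebraically closed and we are in equal characteristic --- is indeed the only nontrivial verification, and your proposed fix via an \'etale atlas of the stack is the standard one.
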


\noindent Therefore, understanding these deformations can give us a full picture of the moduli space $\mathcal{AS}_g$. We thus want to answer the following question.

\begin{question}[Deformation of Artin-Schreier covers problem]
\label{questionglobal}
Suppose we are given $\overrightarrow{E}_1$ and $\overrightarrow{E}_2$ in $\Omega_{d+2}$. Does there exists a deformation over $k[[t]]$ of type $[\overrightarrow{E}_1]^{\top} \xrightarrow{} [\overrightarrow{E}_2]^{\top}$?
\end{question}


\subsubsection{The graph \texorpdfstring{$C_d$}{Cd}}

Let us fix a prime $p$ and construct a directed graph $C_d$ (where $d$ is as in Definition \ref{defbranchingdatum}). The vertices of the graph correspond to the partitions $\overrightarrow{E}$ in $\Omega_d$. There is an arrow from $\overrightarrow{E}$ to $\overrightarrow{E}'$ if and only if $\overrightarrow{E}\prec \overrightarrow{E}'$, and $\Gamma_{\overrightarrow{E}}$ lies in the closure $\Gamma_{\overrightarrow{E}'}$.

In general topology, if one irreducible subset of a space lies in the closure of another, then they are contained in the same connected component of that space. Thus, if $C_d$ is connected, then so is $\mathcal{AS}_g$. It is straightforward to check that the converse also holds. From now on, we will study the geometry of $C_d$.

\begin{example}
Suppose $p=5$ and $g=14$. Then $d=7$. Using the information from \cite[Theorem 3.10]{DANG2020398}, we draw a subgraph of $C_7$ in Figure \ref{figC7}. As the subgraph is connected, the graph $C_7$ is connected, and so is $\mathcal{AS}_{14}$.
\begin{figure}
    \centering
\begin{tikzpicture}[-latex ,auto ,node distance =1.1 cm and 1.7cm ,on grid ,
semithick] 
\node (C)
{$\{9\}$};
\node (A) [below left=of C] {$\{5,4\}$};
\node (B) [below right =of C] {$\{7,2\}$};
\node (D) [below =of C] {$\{3,3,3\}$} ;
\node (E) [below =of A] {$\{4,3,2\}$} ;
\node (F) [below =of B] {$\{5,2,2\}$} ;
\node (G) [below =of E] {$\{3,2,2,2\}$} ;
\path (C) edge [bend right =15] node[below =0.15 cm] { } (A);
\path (C) edge node[below =0.15 cm] { } (D);
\path (B) edge [bend left =15] node[below =0.15 cm] { } (E);
\path (B) edge node[below =0.15 cm] { } (F);
\path (C) edge [bend right =90] node[below =0.15 cm] { } (G);
\path (C) edge [bend left =5] node[below =0.15 cm] { } (E);
\end{tikzpicture}
    \caption{A subgraph of $C_7$}
    \label{figC7}
\end{figure}
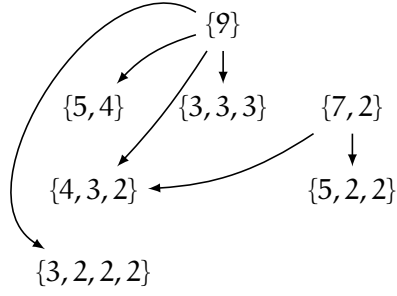
\end{example}

\begin{remark}
One can apply the results later in this paper (Proposition \ref{propchaindiffforms} and Proposition \ref{propgrobner}) to show that the above diagram is the complete $C_7$. Thus, we can read from the graph that the irreducible components of $\mathcal{AS}_{14}$ are the closures of the following strata: $\Gamma_{\{5,4\}}$,$\Gamma_{\{3,3,3\}}$, $\Gamma_{\{4,3,2\}}$, $\Gamma_{\{5,2,2\}}$, and $\Gamma_{\{3,2,2,2\}}$. Furthermore, the intersection of $\overline{\Gamma}_{\{3,3,3\}}$ and $\overline{\Gamma}_{\{3,2,2,2\}}$ is $\Gamma_{\{9\}}$, and the intersection of $\overline{\Gamma}_{\{4,3,2\}}$ and $\overline{\Gamma}_{\{5,2,2\}}$ is $\Gamma_{\{7,2\}}$ ($\overline{\Gamma}$ denotes the closure of the strata $\Gamma$).
\end{remark}

\subsection{Reduction to the local deformation problem}
\label{secreducetolocal}

We first state a fact about germs of Artin-Schreier curves, which implies that, locally, they are easy to control.

\begin{proposition}
\label{localAS}
Suppose $\phi: Y \rightarrow \mathbb{P}^1_k$ is an Artin-Schreier cover and $P \in \mathbb{P}^1_k$ is a branch point of $\phi$. Then the localization of $\phi$ at $P$ is determined by the ramification jump at $P$. 
\end{proposition}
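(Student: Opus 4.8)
The plan is to show that, after completing at $P$, the germ of $\phi$ is isomorphic to the standard Artin--Schreier germ $y^p-y=x^{-d}$, where $d$ is the ramification jump; since this model depends only on $d$, the proposition follows. First I would choose a local coordinate $x$ centered at $P$ and pass to completions, so that the localization of $\phi$ becomes the $\mathbb{Z}/p$-extension of $K:=k((x))$ defined by $y^p-y=f$ for some $f\in K$. By the discussion following \eqref{eqnAS}, after replacing $f$ by an equivalent function in reduced form, the ramification jump $d$ is exactly the pole order of $f$ at $x=0$, and $d\not\equiv 0\pmod p$. Two equations $y^p-y=f_1$ and $y^p-y=f_2$ define isomorphic germs as soon as $f_1$ and $f_2$ become equal after (i) adding an element of $\wp(K)$, (ii) scaling by an element of $\mathbb{F}_p^{\times}$, and (iii) applying a continuous $k$-automorphism of $K$, that is, a change of uniformizer; each of these operations leaves the isomorphism class of the local cover unchanged. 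So it is enough to transform an arbitrary $f$ of pole order $d$ into $x^{-d}$ using (i)--(iii).

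The reduction proceeds in stages. Since $\wp$ is surjective on $k[[x]]$ (solve $l^p-l=g$ coefficient by coefficient, using that $k$ is algebraically closed for the constant term), I can use (i) to clear the holomorphic part of $f$ and assume $f=\sum_{j=1}^{d} b_j x^{-j}$ with $b_d\neq 0$. The scaling $x\mapsto \lambda x$ with $\lambda^d=b_d$ normalizes the leading coefficient to $1$. I then remove the subleading polar terms one at a time, from order $d-1$ down to order $1$. Suppose $f=x^{-d}+(\text{polar terms of order}\le m)$ with $m<d$; applying the change of uniformizer
\[ x\ \longmapsto\ w\,\bigl(1+\beta\,w^{\,d-m}\bigr) \]
gives $x^{-d}=w^{-d}-d\beta\,w^{-m}+(\text{terms of order}<m)$, while every polar term $b_j x^{-j}$ with $j\le m$ maps to $b_j w^{-j}$ plus terms of order strictly less than $j$. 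Hence the coefficient of $w^{-m}$ becomes (old coefficient)$-d\beta$, which I kill by choosing $\beta$ to be that coefficient divided by $d$. Crucially this substitution does not disturb the leading term $w^{-d}$ and creates no new terms of order strictly between $m$ and $d$, so the orders $m,m+1,\dots,d-1$ are all zero afterward.

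Iterating this step finitely many times reduces $f$ to $x^{-d}$ plus a holomorphic remainder, which a final application of (i) removes; thus the germ is $y^p-y=x^{-d}$, determined by $d$ alone. The main point, and the only place where the hypothesis $d\not\equiv 0\pmod p$ is used, is the bookkeeping in the inductive step: I must check that the displayed substitution clears the order-$m$ coefficient (which needs $d\neq 0$ in $k$ so that $\beta$ is well defined) while preserving the pole order $d$ and not reintroducing terms in the range $(m,d)$. Once this is verified the induction terminates after $d-1$ steps. I would also remark that, without permitting changes of uniformizer, the subleading coefficients are genuine invariants of the class in $K/\wp(K)$; it is precisely operation (iii) that collapses all germs with jump $d$ into one, which is why the local cover, but not the global function $f$, is determined by $d$.
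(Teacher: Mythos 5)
Your proof is correct, but it is genuinely different from what the paper does: the paper gives no argument at all and simply cites Lemma 2.1.2 of \cite{MR2016596}, whereas you supply a complete, self-contained normalization argument showing that every germ with jump $d$ is isomorphic to $y^p-y=x^{-d}$ (which is exactly the consequence the paper draws immediately after the proposition). Your three reduction moves are all legitimate and your bookkeeping checks out: under $x\mapsto w(1+\beta w^{d-m})$ one has $x^{-d}=w^{-d}-d\beta w^{-m}+O(w^{d-2m})$ and $b_jx^{-j}=b_jw^{-j}+O(w^{-j+(d-m)})$, so no polar orders in the range $(m,d)$ are created and the order-$m$ coefficient is killed by $\beta=b_m/d$, using $p\nmid d$; the leading coefficient is normalized because $k$ is algebraically closed, and $\wp$ is surjective on $k[[x]]$ by the usual coefficient-by-coefficient recursion. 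Two small remarks: you do not actually use your operation (ii) (scaling by $\mathbb{F}_p^{\times}$), since the leading coefficient is absorbed by the change of uniformizer $x\mapsto\lambda x$; and it is worth saying explicitly that the induction needs only $p\nmid d$, not any condition on the subleading orders $m$, so you do not even need the intermediate expressions to stay in reduced form. The trade-off is the usual one: the citation is shorter, while your argument makes the proposition (and the explicit normal form $y^p-y=1/x^{h-1}$ used throughout the paper) verifiable on the spot.
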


\begin{proof}

See Lemma $2.1.2$ of \cite{MR2016596}.
\end{proof}
That means a $\mathbb{Z}/p$-cover of $\spec k[[x]]$ of conductor $h$, up to a change of the variable $x$, is isomorphic to one defined by
\[ y^p-y =\frac{1}{x^{h-1}}. \]
The following local-global principle type result will help us to reduce our study of Artin-Schreier deformations to the local case.
 
\begin{proposition}[{c.f. \cite[Proposition 3.4]{DANG2020398}}]
\label{propreduce}
Suppose $\overrightarrow{E}_1=\{h_1,h_2,\ldots,h_n\}$ and $\overrightarrow{E}_2$ are in $\Omega_{h}$. Then there exists a deformation of type $[\overrightarrow{E}_1]^{\top} \xrightarrow{} [\overrightarrow{E}_2]^{\top}$ if and only if there exist $n$ partitions $\{h_i\}\prec \overrightarrow{E}_{i,2}:=\{h_{i,1},\ldots, h_{i,m_i}\} \subseteq \overrightarrow{E}_2$ ($1 \le i \le n$), where the $\overrightarrow{E}_{i,2}$'s partition $\overrightarrow{E}_2$, and $n$ deformations $\psi_1, \psi_2 \ldots, \psi_n$ over $k[[t]]$, where $\psi_i$ has type $[h_i] \xrightarrow{} [h_{i,1},\ldots, h_{i,m_i}]^{\top}$. In particular, a deformation of this type exists only if $\overrightarrow{E}_1 \prec \overrightarrow{E}_2$.
\end{proposition}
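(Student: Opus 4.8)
The statement is a local--global principle, so the plan is to prove the two implications separately; the geometric point throughout is that an equal-characteristic deformation is a family of $\mathbb{Z}/p$-covers of $\mathbb{P}^1$ that is \'etale away from a relative branch divisor, so its content is concentrated in formal neighborhoods of the finitely many points $P_1,\dots,P_n$ of the special fiber carrying the conductors $h_1,\dots,h_n$.

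For the forward implication, let $\psi\colon\mathscr{C}\to\mathbb{P}^1_{k[[t]]}$ be a deformation of type $[\overrightarrow{E}_1]^{\top}\to[\overrightarrow{E}_2]^{\top}$, with relative branch divisor $\mathscr{B}\subset\mathbb{P}^1_{k[[t]]}$. The special fiber of $\mathscr{B}$ is $\{P_1,\dots,P_n\}$ and its generic fiber carries the conductors recorded in $\overrightarrow{E}_2$; since the only branch points of the special fiber are the $P_i$, every generic branch point specializes to a unique $P_i$. Grouping the parts of $\overrightarrow{E}_2$ according to the $P_i$ to which the corresponding points specialize produces sub-multisets $\overrightarrow{E}_{i,2}\subseteq\overrightarrow{E}_2$ that partition $\overrightarrow{E}_2$. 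Restricting $\psi$ to the formal completion along $P_i$ gives a local deformation $\psi_i$ whose special fiber is, by Proposition \ref{localAS}, the germ of conductor $h_i$; thus $\psi_i$ has type $[h_i]\to[h_{i,1},\dots,h_{i,m_i}]^{\top}$. It remains to check that $\sum_j h_{i,j}=h_i$, i.e. that $\{h_i\}\prec\overrightarrow{E}_{i,2}$. This is conservation of the conductor under equal-characteristic specialization, which follows from the degeneration theory of Maugeais and Sa{\"i}di (\cite{MR2015076}, \cite{MR2377173}) and is consistent with the constancy of the total sum of conductors recorded in Remark \ref{remarksumofconductors}.

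For the reverse implication, suppose we are given a partition $\overrightarrow{E}_2=\bigsqcup_i\overrightarrow{E}_{i,2}$ with $\{h_i\}\prec\overrightarrow{E}_{i,2}$ together with local deformations $\psi_i$ of type $[h_i]\to\overrightarrow{E}_{i,2}$. I would assemble these into a global deformation by formal patching. Away from $\{P_1,\dots,P_n\}$ the cover $\phi$ is \'etale, and an \'etale cover deforms uniquely over $k[[t]]$, which furnishes a canonical deformation $\psi_0$ of the complement. On the punctured formal disc (boundary annulus) around each $P_i$, both $\psi_0$ and $\psi_i$ restrict to \'etale $\mathbb{Z}/p$-covers deforming the same \'etale germ, and are therefore canonically identified there. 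Gluing $\psi_0$ to the $\psi_i$ along these annuli --- by the same formal patching argument for $\mathbb{Z}/p$-covers used in \cite[Proposition 3.4]{DANG2020398} and by Henrio in \cite{2000math.....11098H} --- yields a flat family $\psi\colon\mathscr{C}\to\mathbb{P}^1_{k[[t]]}$ with special fiber $\phi$ and generic branch data $\overrightarrow{E}_2$, which is the required deformation.

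The step I expect to be the main obstacle is this gluing: one must verify that the local deformations are compatible along the boundary annuli and that the patched object is again a smooth family of $\mathbb{Z}/p$-covers with the prescribed generic branching. Compatibility is ensured by the rigidity of \'etale deformations, while the patching machinery supplies flatness and the $\mathbb{Z}/p$-structure. The final ``only if'' clause is then immediate: each $\overrightarrow{E}_{i,2}$ refines $\{h_i\}$, so their disjoint union $\overrightarrow{E}_2$ refines $\overrightarrow{E}_1=\{h_1,\dots,h_n\}$, whence $\overrightarrow{E}_1\prec\overrightarrow{E}_2$.
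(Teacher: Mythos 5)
Your proposal is correct and follows essentially the same route as the paper, which defers this statement to \cite[Proposition 3.4]{DANG2020398}: localize the deformation at the formal neighborhoods of the special branch points for the forward direction, and reassemble local deformations with the rigid \'etale deformation of the complement by formal patching for the converse. The only cosmetic point is that the identity $\sum_j h_{i,j}=h_i$ (and the fact that no generic branch point can specialize outside $\{P_1,\dots,P_n\}$) is most directly justified by the different criterion (Proposition \ref{propdifferentcriterion}) applied to each local germ, rather than by an appeal to Maugeais--Sa\"idi.
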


\begin{example}
Suppose $p=5$. Then, by Proposition \ref{propreduce}, there exists a deformation (over $k[[t]]$) of type $[7,4]^{\top} \xrightarrow{} [4,3,2,2]^{\top}$ if and only if there exist either two deformations of types $[7] \xrightarrow{} [3,2,2]^{\top}$ and $[4] \xrightarrow{} [4]$ (trivial deformation), or two deformations of types $[7] \xrightarrow{} [4,3]^{\top}$ and $[4] \xrightarrow{} [2,2]^{\top}$.
\end{example}

\begin{remark}
It follows immediately from the above proposition that, if we have a deformation as in Remark \ref{remarktype}, then $\sum_{j=1}^{m_i} h_{i,j}=h_i$ for all $i$.
\end{remark}

Proposition \ref{localAS} and Proposition \ref{propreduce} suggest that we may assume $\phi$ is a $\mathbb{Z}/p$-extension $k[[z]]/k[[x]]$ branched only  at $x=0$ with conductor $h$. Therefore, we may also think of a deformation $\phi$ over $k[[t]]$ as a $\mathbb{Z}/p$-cover $k[[t]][[Z]]/k[[t]][[X]]$. We thus reduce Question \ref{questionglobal} to the following.

\begin{question}[Local deformation of Artin-Schreier covers]
\label{questionlocal}

Suppose $\{h\} \prec \{h_1, \ldots, h_r\} \in \Omega_h$ and $\phi$ is a $\mathbb{Z}/p$-cover $k[[z]]/k[[x]]$ given by $y^p-y=\frac{1}{x^{h-1}}$. Define $R:=k[[t]]$. Does there exist a deformation $R[[Z]]/R[[X]]$ of $\phi$ with generic branching datum $[h_1, \ldots, h_r]^{\top}$?

\end{question}

Hence, the question can be fully answered by the following local version of Theorem \ref{theoremmain}.

\begin{theorem}
\label{theoremmainlocal}
 Let $\phi: k[[z]] \xrightarrow{} k[[x]]$ be a local $G$-cover with conductor $d$. Then there exists a deformation of $\phi$ over $k[[t]]$ of type $[d+1] \xrightarrow{} [d_1+1, \ldots, d_r+1]^{\top}$ if and only if there exists a Hurwitz tree of type $\{d_1+1, \ldots, d_r+1\}$.
\end{theorem}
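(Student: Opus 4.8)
The plan is to prove the biconditional in two directions, matching the structure announced in the introduction: the forward direction (``deformation $\Rightarrow$ Hurwitz tree'') is carried out in Section \ref{sectionHurwitz}, and the reverse direction (``Hurwitz tree $\Rightarrow$ deformation'') in Section \ref{sechurwitztocover}. For the forward direction, I would start with a deformation $\phi_R\colon R[[Z]]/R[[X]]$ of $\phi$ over $R=k[[t]]$ whose special fiber is the given cover of conductor $d$ branched at $x=0$ and whose generic fiber is branched at $r$ points with conductors $d_i+1$. The strategy, following Henrio's idea \cite{2000math.....11098H} as adapted to equal characteristic via Maugeais--Sa\"idi \cite{MR2015076, MR2377173}, is to pass to a semistable model of the cover over (a finite extension of) $R$ and read off its dual graph. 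The vertices correspond to the irreducible components of the special fiber of this model, the edges to the nodes, and one decorates each edge and vertex with differential/combinatorial data. The essential point is that, because the generic-fiber branch points all specialize to the single point $b$, the tree is rooted at the component carrying $b$, and the $r$ leaves record the $r$ generic branch points with their conductors $d_i+1$; this is exactly what forces the \emph{type} to be $\{d_1+1,\dots,d_r+1\}$. The degeneration data attached to the edges are measured by Kato's refined Swan conductors, developed in \S\ref{seccoverdisc}, and one must check that these satisfy the axioms (the differential conditions and the conductor-compatibility at each vertex) that constitute the definition of a Hurwitz tree in Definition \ref{defhurwitztree}.

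For the reverse direction, I would begin with an abstract Hurwitz tree $T$ of type $\{d_1+1,\dots,d_r+1\}$ and reconstruct a deformation realizing it. The approach is inductive on the tree: starting from the leaves and working toward the root, I would build compatible local $\mathbb{Z}/p$-covers on each piece of a formal/rigid annulus-and-disc decomposition dictated by the combinatorics of $T$, with the prescribed refined Swan conductors as glueing/boundary data on the edges. Concretely, on each component one writes down an explicit Artin-Schreier(-type) equation over the corresponding ring whose ramification behavior matches the vertex's prescribed data, and then one patches these covers along the annuli corresponding to the edges using the differential data to guarantee that the characters agree on overlaps. The result is a $\mathbb{Z}/p$-torsor over the generic fiber of a semistable curve which, by a formal-patching or rigid-analytic glueing argument (in the spirit of \cite{2000math.....11098H,MR2254623}), extends to the desired cover $R[[Z]]/R[[X]]$; its special fiber recovers $\phi$ (conductor $d$, branched at $b$) and its generic fiber has the required branching datum $[d_1+1,\dots,d_r+1]^\top$.

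The hardest part will be the reverse direction, and specifically two intertwined issues. First, one must verify that the local covers attached to the vertices can actually be glued into a \emph{connected} $\mathbb{Z}/p$-cover of the whole formal disc rather than a disjoint union, which requires the compatibility of the refined Swan conductors across each edge to be strong enough to patch the underlying characters; this is where the full force of the Hurwitz tree axioms (not merely the numerical type) must be used. Second, one must ensure that the reconstructed cover has \emph{good reduction} to exactly the special fiber $\phi$ of conductor $d$---i.e.\ that the semistable model one produces is the right one and that no extra branching or degeneration appears on the special fiber. Controlling this precisely is where Kato's refined Swan conductor machinery from \S\ref{seccoverdisc} is indispensable, since it is what translates the purely combinatorial glueing data of $T$ into the analytic condition guaranteeing the prescribed reduction. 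The forward direction, by contrast, is mostly a matter of carefully extracting well-defined invariants from the semistable model and checking they satisfy the axioms, which I expect to be more routine once the refined Swan conductor formalism is in place.
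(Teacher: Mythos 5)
Your overall architecture matches the paper's: the forward direction is exactly Proposition \ref{propactiontotree} (semistable model of the marked disc, decorate with Kato's refined Swan conductors, verify the axioms via Proposition \ref{propgoodannulus} and Corollaries \ref{corgood}--\ref{coromegagood}), and the reverse direction is exactly Theorem \ref{theoremtreeaction} (explicit Artin--Schreier equations on the leaves, punctured discs, and annuli, glued along boundaries via Henrio's patching lemmas, with connectedness handled because Propositions \ref{propvertex} and \ref{propedge} show the covering spaces of the pieces are themselves punctured discs and annuli, so the $G$-action glues equivariantly). Both of the difficulties you single out in the reverse direction are real and are the ones the paper addresses.

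There is, however, one concrete gap in your reverse direction. Realizing the Hurwitz tree as in Theorem \ref{theoremtreeaction} produces a cover whose boundary depth equals the depth $\delta_{v_0}$ of the tree, and Definition \ref{defhurwitztree} does \emph{not} require $\delta_{v_0}=0$. If $\delta_{v_0}>0$ the realized cover has radical (non-\'etale) reduction and is therefore not a deformation of $\phi$ at all --- this is not a matter of ``carefully controlling the semistable model,'' as you frame it, but of the input tree being the wrong one. The direct realization argument therefore proves only Corollary \ref{corgoodtreedeformation}, which carries the extra hypothesis ``and with depth zero.'' The paper removes that hypothesis via Proposition \ref{propchaindiffforms}: from an arbitrary Hurwitz tree of type $\{d_1+1,\dots,d_r+1\}$ one extracts the chain of exact differential conductors, and the reverse construction in that proposition rebuilds a tree of the same type whose root has degeneration type $\bigl(0,\tfrac{1}{x^{d}}\bigr)$, i.e.\ depth zero, which is then realizable as a genuine deformation. (A more direct fix --- subtracting $\delta_{v_0}$ from every $\delta_v$, which preserves \ref{c1Hurwitz} and \ref{c5Hurwitz} since only differences of depths enter --- would also work, but your write-up contains neither normalization, so as stated your plan establishes the theorem only for depth-zero trees.)
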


\subsection{Birational deformation and the different criterion}

Usually, when dealing with Galois extensions of $k[[x]]$, it will be more convenient to deal with extensions of fraction fields than extensions of rings. So we will often want to think of a Galois ring extension in terms of the associated extension of fraction fields. 

\begin{definition}
\label{defbirationaldeformatiton}
Suppose $A/k[[x]]$ is a local $G$-extension. Suppose, moreover, that $M / \Frac (R[[X]])$ where $R/k[[t]]$ finite, is a $G$-extension, and $A_R$ is the integral closure of $R[[X]]$ in $M$. We say $M / \Frac (R[[X]])$ is a \textit{birational deformation} of $A/k[[x]]$ if
\begin{enumerate}
    \item The integral closure of $A_R \otimes_R k$ is isomorphic to $A$, and
    \item The $G$-action on $\Frac (A)= \Frac (A_R \otimes_R k)$ induced from that on $A_R$ restricts to the given $G$-action on $A$.
\end{enumerate}
\end{definition}

The following criterion is extremely useful for seeing when a birational deformation is actually a deformation (i.e., when $A_R \otimes_R k$ is already integrally closed, this isomorphic to $A$).

\begin{proposition}[(The different criterion) {\cite[I,3.4]{MR1645000}}]
\label{propdifferentcriterion} Suppose $A_R/R[[X]]$ is a birational deformation of the $G$-Galois extension $A/k[[x]]$. Let $K=\Frac R$, let $\delta_{\eta}$ be the degree of the different of $(A_R \otimes_R K)(R[[X]] \otimes_R K)$, and let $\delta_s$ be the degree of the different of $A/k[[x]]$. Then $\delta_s \le \delta_{\eta}$, and equality holds if and only if $A_R/R[[X]]$ is a deformation of $A/k[[x]]$.

\end{proposition}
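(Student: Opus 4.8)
The plan is to express both different degrees as colengths of Dedekind complementary modules and to track what happens under the specialization $t\mapsto 0$, the discrepancy being governed by the failure of $A_0:=A_R\otimes_R k$ to be integrally closed. Write $B:=R[[X]]$ and $K:=\Frac R$, and let $A$ denote the integral closure of $A_0$; by Definition \ref{defbirationaldeformatiton}, $A/k[[x]]$ is the special-fibre $G$-cover and $A_R/B$ is a deformation exactly when $A_0=A$. The structural input I would establish first is that $A_R$ is finite and \emph{flat} (hence free) over the regular two-dimensional local ring $B$: indeed $A_R$ is normal of dimension $2$, so Cohen--Macaulay, and miracle flatness applies. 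In particular the relative complementary module $\mathfrak C:=\Hom_B(A_R,B)$, realized inside $M$ via the (nondegenerate, as $M/\Frac B$ is separable) trace pairing as the fractional ideal $\{z\in M:\mathrm{Tr}(zA_R)\subseteq B\}$, is $B$-free, and its formation commutes with the base changes $B\to B\otimes_R K$ and $B\to B\otimes_R k=k[[x]]$.

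Next I would prove a conservation statement: $\delta_\eta=\dim_k(\mathfrak C_{A_0}/A_0)$, where $\mathfrak C_{A_0}=\{z:\mathrm{Tr}(zA_0)\subseteq k[[x]]\}$. The generic fibre $A_R\otimes_R K$ is a regular semilocal ring (the smooth cover), hence Gorenstein, so the degree of its different equals $\dim_K\bigl(\mathfrak C\otimes_R K\,/\,A_R\otimes_R K\bigr)=\delta_\eta$. For the special fibre, consider $0\to A_R\to\mathfrak C\to Q\to 0$. Both $A_R$ and $\mathfrak C$ are $R$-flat, and since $A_0$ is $k[[x]]$-free the trace of every multiplication operator lies in $k[[x]]$, giving $A_0\subseteq\mathfrak C_{A_0}$; thus $A_0\to\mathfrak C\otimes_R k=\mathfrak C_{A_0}$ is injective, so $\mathrm{Tor}_1^R(Q,k)=0$ and $Q$ is $R$-flat. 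As $R$ is a discrete valuation ring this forces $\dim_K(Q\otimes_R K)=\dim_k(Q\otimes_R k)$, that is $\delta_\eta=\dim_k(\mathfrak C_{A_0}/A_0)$. I expect this conservation step---confirming that the complementary module base-changes correctly and that no ramification is lost to a vertical component---to be the main obstacle, since it is precisely where the $R$-flatness of $A_R$ over $B$ and the integrality of traces become indispensable.

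Finally I would compare $\dim_k(\mathfrak C_{A_0}/A_0)$ with $\delta_s$. Because $A$ is regular (a discrete valuation ring over $k[[x]]$) it is Gorenstein, so $\delta_s=\dim_k(\mathfrak C_A/A)$ with $\mathfrak C_A=\{z:\mathrm{Tr}(zA)\subseteq k[[x]]\}$. From $A_0\subseteq A$ the defining condition reverses, giving $\mathfrak C_A\subseteq\mathfrak C_{A_0}$, and together with $A\subseteq\mathfrak C_A$ one obtains a chain $A_0\subseteq A\subseteq\mathfrak C_A\subseteq\mathfrak C_{A_0}$ of $k[[x]]$-modules with finite colengths. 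Additivity of length then yields $\delta_\eta-\delta_s=\dim_k(A/A_0)+\dim_k(\mathfrak C_{A_0}/\mathfrak C_A)\ge 0$, which is the desired inequality $\delta_s\le\delta_\eta$. Equality forces $\dim_k(A/A_0)=0$, i.e. $A_0=A$; conversely $A_0=A$ gives $\mathfrak C_{A_0}=\mathfrak C_A$ and hence equality. Thus $\delta_s=\delta_\eta$ if and only if $A_R\otimes_R k$ is integrally closed, i.e. if and only if $A_R/B$ is a deformation of $A/k[[x]]$, as claimed.
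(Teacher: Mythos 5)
The paper does not actually prove this proposition; it is quoted verbatim from Green--Matignon \cite[I.3.4]{MR1645000}. Your argument reconstructs what is essentially the standard proof from that source: freeness of $A_R$ over $B=R[[X]]$ via miracle flatness, identification of $\Hom_B(A_R,B)$ with the trace-theoretic complementary module and its compatibility with the two base changes, and the chain $A_0\subseteq A\subseteq\mathfrak C_A\subseteq\mathfrak C_{A_0}$ whose colengths account exactly for $\delta_\eta-\delta_s$. The overall structure is sound, and you correctly isolate the conservation step as the crux. (Two implicit hypotheses you are using without comment: that $A_0=A_R\otimes_Rk$ is reduced so that it embeds into $\Frac(A)$ and the trace pairing on the special fibre is nondegenerate --- this is built into Definition \ref{defbirationaldeformatiton}(2) via $\Frac(A)=\Frac(A_R\otimes_Rk)$ --- and that $M/\Frac(R[[X]])$ and $\Frac(A)/k((x))$ are separable, which holds because both are Galois.)

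The one step that is stated too glibly is the final inference in the conservation paragraph: ``$Q$ is $R$-flat, and as $R$ is a discrete valuation ring this forces $\dim_K(Q\otimes_RK)=\dim_k(Q\otimes_Rk)$.'' Flatness over a DVR does \emph{not} by itself equate the fibre dimensions of a module that is only finitely generated over $B$ (already $Q=K$, viewed as an $R$-module, is flat with generic fibre of dimension $1$ and special fibre $0$; $Q=B$ has both fibres infinite). What you need, and what is true here, is that $Q=\mathfrak C/A_R$ is finite over $R$: since $Q\otimes_Rk\cong\mathfrak C_{A_0}/A_0$ is a torsion quotient of two $k[[x]]$-lattices in $\Frac(A)$, it has finite length, so $\operatorname{Supp}(Q)\cap V(t)$ is the closed point of $\spec B$; every one-dimensional component of $\operatorname{Supp}(Q)$ is $V(\mathfrak p)$ with $B/\mathfrak p$ either equal to $k[[x]]$ (excluded by the previous sentence) or finite over $R$ by Weierstrass preparation, so $B/\operatorname{Ann}(Q)$, hence $Q$, is a finite $R$-module. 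Combined with the torsion-freeness you established via $\operatorname{Tor}_1^R(Q,k)=0$, this makes $Q$ finite free over $R$ and the equality of fibre dimensions follows. With this lemma inserted the proof is complete; as a consistency check, dualizing $0\to A_0\to A\to A/A_0\to 0$ over $k[[x]]$ gives $\dim_k(\mathfrak C_{A_0}/\mathfrak C_A)=\dim_k(A/A_0)$, so your decomposition refines to $\delta_\eta-\delta_s=2\dim_k(A/A_0)$, matching the term $2\delta_{\overline y}/(p-1)$ in the vanishing cycle formula of Proposition \ref{propvanishingcycle}.
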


\begin{example}
\label{exfirst}
Let $p=5$, $g=10$, hence, $d=5$. Consider the $\mathbb{Z}/p$-cover $\psi$ given by the normalization of $\mathbb{P}^1_{k[[t]]}$ over the extension of its fraction field defined by the following equation:
\begin{equation}
\label{eqnex1}
    Y^5-Y=\dfrac{-2X+t^5}{(-2)X^5(X-t^5)^2}=H(X,t).
\end{equation}
The special fiber is birational to the $\mathbb{Z}/5$-cover
\[y^5-y=\dfrac{1}{x^{6}},\]
\noindent which is branched at $0$ with conductor $7$.

On the generic fiber, when $t \neq 0$, the partial fraction decomposition of $H(X,t)$ is of the form:
\[\dfrac{-1}{2t^5X^5}+\dfrac{1}{2t^{15}X^3}+\dfrac{1}{t^{20}X^2}+\dfrac{3}{2t^{25}X}+\dfrac{1}{2t^{20}(X-t^5)^2}-\dfrac{3}{2t^{25}(X-t^5)}.\]
\noindent Adding $\frac{1}{2t^5X^5}-\frac{1}{2tX}$ to $H(X,t)$, we get
\[\dfrac{-1}{2tX}+\dfrac{1}{2t^{15}X^3}+\dfrac{1}{t^{20}X^2}+\dfrac{3}{2t^{25}X}+\dfrac{1}{2t^{20}(X-t^5)^2}-\dfrac{3}{2t^{25}(X-t^5)},\]
\noindent which is a reduced form. Hence, the generic fiber is branched at two points $X=0$ and $X=t$, which have conductors $4$ and $3$, respectively. It then follows from Proposition \ref{propdifferentcriterion} that $\psi$ is a deformation of type $[7] \xrightarrow{} [4,3]^{\top}$.

\end{example}

\begin{example}
\label{examplemain}
One can check, in the same fashion as above, that the $\mathbb{Z}/5$-cover given by
\[ Y^5-Y=\frac{-2X+t^{10}}{(-2)X^5(X-t^{10})^2(X-t^5)^5} \]
\noindent is a deformation of $y^5-y=\frac{1}{x^{11}}$ over $k[[t]]$, where the generic branch points $0, t^{10}$, and $t^{5}$ have conductors $4, 3$, and $5$, respectively. Hence, it is a deformation of type $[12] \xrightarrow{} [4,3,5]^{\top}$.
\end{example}

\begin{remark}
Suppose $\phi$ is a Galois cover over $k$. We can also apply the criterion to determine whether a deformation $\psi$ of $\phi$ over a ring $R$ of mixed characteristic (e.g., $R$ is a finite extension of $W(k)$, where $W(k)$ is a Witt vector over $k$) is smooth. The lifting problem for Galois covers concerns the existence of a Galois cover in characteristic $0$ that reduces to a given one in characteristic $p$. See \cite[\S 4]{MR3194815} for some explicit lifts of local Artin-Schreier covers and $\mathbb{Z}/p^2$-covers. Some good expositions of the problem are \cite{MR3051249}, \cite{MR2254623}, \cite{MR3194815}, and \cite{MR3874854}. Recall that the notion of Hurwitz tree, which we will discuss in \S \ref{sectionHurwitz}, is first introduced in \cite{2000math.....11098H} to tackle the lifting problem for Artin-Schreier covers. Theorem \ref{theoremmain} is the exact equal characteristic analog of \cite[Th{\'e}or{\`e}m de r{\'e}alisation]{2000math.....11098H}.
\end{remark}

\section{Degeneration of \texorpdfstring{$\mathbb{Z}/p$}{Zp}-covers}
\label{seccoverdisc}

In this section, we study the degeneration of $\mathbb{Z}/p$-covers of $\spec R[[X]]$, where $R$ is a complete discrete valuation of equal characteristic and with uniformizer $\pi$. Normalize the canonical valuation on $R$ so that $\nu(\pi)=1$. Set $K:=\Frac R$ and $\mathbb{K}:=\Frac R[[X]]$. We first introduce a geometric interpretation of $\spec R[[X]]$ using the language of non-archimedean geometry.   

\subsection{Discs and annuli}
\label{secdiscannuli}
Firstly, we identify the $K$-analytic points of $\spec R[[X]]$ with
\[D= \{ u \in (\mathbb{A}^1_K)^{\an} \mid \nu(u)>0  \} \]
\noindent by plugging in $X=u$. We call $X$ a \textit{parameter} for the \textit{open unit disc} $D$ \textit{with center} $0$.

Let $R\{X\} \subsetneq R [[X]]$ consist of power series for which the coefficients tend to $0$, i.e., 
$$R\{X\}=\bigg\{ \sum_{i \ge m} a_iX^i \mid \lim_{i \to \infty} \nu (a_i) =\infty \bigg\} $$
\noindent As before, $K$-points of a \textit{closed unit disc} $\spec R\{X\}$ can be identified with
\[ \mathcal{D}=\{ u \in (\mathbb{A}^1_K)^{\an} \mid \nu(u)\ge 0 \}. \]
The \textit{boundary} of the open (or closed) unit disc is represented by the scheme $\spec R[[X^{-1}]]\{X\}$. Note that the ring $S:=R[[X^{-1}]]\{X\}$ is a complete discrete valuation ring with residue field $\overline{S}=k((x))$, uniformizing element $\pi$ and fraction field $S \otimes_R K$.

For $r\in \mathbb{Q}_{\ge 0}$, and $a \in K$ such that $\lvert a \rvert=r$, the open (resp. closed) disc of radius $r$ is characterized by the scheme $\spec R[[a^{-1}X]]$ (resp. $\spec R\{a^{-1}X\}$). Its set of $K$-points is isomorphic to the open (resp. closed) unit disc under the map $X \mapsto a^{-1}X$. Denote by 
$$\mathcal{D}[s,z]:=\{  u \in (\mathbb{A}^1_K)^{\an} \mid \nu(u-z)\ge s \},$$
\noindent where $z \in (\mathbb{A}^1_K)^{\an}$, the closed disc of radius $p^{-s}$ centered at $X-z$ , and $\mathcal{D}[s]:=\mathcal{D}[s,0]$. One can associate with $\mathcal{D}[s,z]$ the "Gauss valuation" $\nu_{s,z}$ that is defined by
\begin{equation*}
    \nu_{s,z}(f)=\inf_{a \in \mathcal{D}[s,z]} (\nu(f(a)),
\end{equation*}
for each $f \in \mathbb{K}^{\times}$. This is a discrete valuation on $\mathbb{K}$ which extends the valuation $\nu$ on $K$, and has the property $\nu_{s,z}(X-z)=s$. We denote by $\kappa_s$ the residue field of $\mathbb{K}$ with respect to the valuation $\nu_{s,z}$. That is the function field of the canonical reduction $\overline{\mathcal{D}}[s,z]$ of $\mathcal{D}[s,z]$. In fact, $\overline{\mathcal{D}}[s,z]$ is isomorphic to the affine line over $k$ with function field $\kappa_{s,z}=k(x_{s,z})$, where $x_{s,z}$ is the image of $\pi^{-s}(X-z)$ in $\kappa_{s,z}$. For a closed point $\overline{x} \in \overline{\mathcal{D}}[s,z]$, we let $\ord_{\overline{x}}: \kappa_{s,z}^{\times} \xrightarrow{} \mathbb{Z}$ denote the normalized discrete valuation corresponding to the specialization of $\overline{x}$ on $\overline{\mathcal{D}}[s,z]$. We let $\ord_{\infty}$ denote the unique normalized discrete valuation on $\kappa_{s,z}$ corresponding to the "point at infinity". 

The \textit{open annulus of thickness $\epsilon$} is described by $\spec R[[X,U]]/(XU-a)$, where $a \in K$ is such that $\nu(a)=\epsilon$. The open annulus has two boundaries, one given by $\spec R[[X]]\{X^{-1}\}$ and one given by $\spec R[[U]]\{U^{-1}\}$. Note that two annuli over $R$ are isomorphic if and only if they have the same thickness.

For $F \in \mathbb{K}$, $z \in (\mathbb{A}^1_K)^{\an}$, and $s \in \mathbb{Q}_{\ge 0}$, we let $[F]_{s,z}$ denotes the image of $\pi^{-\nu_{s,z}(F)}F$ in the residue field $\kappa_{s,z}$.

\subsection{Semistable model and a partition of a disc}
\label{secsemistablemodel}

Consider the open unit disc $D:=\spec R[[X]]$, and suppose we are given $x_{1,K}, \ldots, x_{r,K}$ in $D(K)$, with $r \ge 2$. We can think of $x_{1,K}, \ldots, x_{r,K}$ as elements of the maximal ideal of $R$. Let $D^{\st}$ be a modification of $D$ such that
\begin{itemize}
    \item the exceptional divisor $\overline{D}$ of the blow-up $D^{\st} \xrightarrow{} \spec R[[X]]$ is a semistable curve over $k$,
    \item the fixed points $x_{b,K}$ specialize to pairwise distinct smooth points $x_b$ on $\overline{D}$, and
    \item if $\overline{x}_0$ denotes the unique point on $\overline{D}$ which lies in the closure of $D^{\st} \otimes k \setminus \overline{D}$,
\end{itemize}
then $(\overline{D};\overline{x}_0, (x_b))$ is stably marked. We call $D^{\st}$ the \emph{semi-stable model of the marked disc $(D; x_1, \ldots, \allowbreak x_r)$}, and $(\overline{D};\overline{x}_0, (x_b))$ its \textit{special fiber}. The dual graph of the special fiber is a tree whose \textit{leaves} correspond to the marked points and whose \textit{root} corresponds to $\overline{x}_0$.

\begin{example}
\label{examplesemistablemodel}
Suppose we are given a $\mathbb{Z}/5$-cover (in characteristic $5$) of $\mathbb{P}^1_K$ that has four branch points $X=0$, $X=t^5$, $X=t^5(1+t^5)$, and $X=t^{10}$. The left graph of Figure \ref{figspecialfiberdual} represents a semi-stable model of the open unit disc $D=\spec R[[X]]$ marked by the branch points of the cover. The tree on the right is its dual graph.

We associate with each edge an annulus. In this example, $e_0$ corresponds to the spectrum of $R[[X,X_1]]/(XX_1-t^5)$ and $e_1$ resembles $\spec R[[X_1^{-1}-1,V]](V(X_1^{-1}-1)-t^5)$. We say $e_0$ has \textit{thickness} $1$, which is the thickness of the associated annulus divided by $p=5$. Moreover, each vertex, which is not the root of the tree, is associated with a \textit{punctured disc}. For instance, the vertex $v_1$ corresponds to $\spec R\{X_1^{-1},X_1,(X_1^{-1}-1)^{-1}\}$. That can also be thought of as the complement of the open discs $\spec R[[X_1]]$ and $\spec R[[(X_1^{-1}-1)^{-1}]]$ inside the closed disc $\spec R\{X_1^{-1}\}$. The root is linked with $\spec R\{X\}$. Finally, each leaf is associated with an open disc. To illustrate, one correlated with $\overline{t^5}$ is $\spec R[[t^{-5}(X-t^5)]]$. Table \ref{tabpartitions} shows where the $K$-points of the disc specialize. 

Note that, on the punctured disc corresponding to $v_1$, the marked points $t^5$ and $t^5+t^{10}$ (resp. $0$ and $t^{10}$) specialize to $\overline{1}$ (resp. to $\overline{0})$. We call the directions on the dual graph from $v_1$ toward $e_2, e_1$, and $e_0$ the \textit{directions} corresponding to (or with respect to) $\overline{0},\overline{1}$, and $\overline{\infty}$ , respectively.

\begin{center}
\begin{figure}[ht]
\centering
\begin{tikzpicture}[line cap=round,line join=round,>=triangle 45,x=1cm,y=1cm]
\clip(-5.913951741732328,-0.42740404202494114) rectangle (10.236822781996308,4.043296939568755);
\draw [line width=2pt] (1,3.5)-- (1,0);
\draw [line width=2pt] (-4.900693696805673,3.5)-- (-4.900693696805673,2.5);
\draw [line width=2pt] (0.5,2)-- (1.52,2);
\draw [line width=2pt] (0.5,1.2)-- (1.5,1.2);
\draw (-4.846846996414543,2.92063948653854) node[anchor=north west] {$\overline{\infty}$};
\draw (1.901869501000637,2.2829169096693054) node[anchor=north west] {$\overline{0}$};
\draw (1.8009271602273331,1.518978406128035) node[anchor=north west] {$\overline{t^{10}}$};
\draw (1.0510697716256465,0.8812558292588005) node[anchor=north west] {$\overline{D}_3$};
\draw [line width=2pt] (2.497760136302129,3)-- (-5.502239863697875,3);
\draw [line width=2pt] (-3.1869684357952335,3.461122757720566)-- (-3.1869684357952335,-0.03887724227943412);
\draw [line width=2pt] (-3.686968435795234,1.9611227577205659)-- (-2.666968435795234,1.9611227577205659);
\draw [line width=2pt] (-3.686968435795234,1.1611227577205658)-- (-2.686968435795234,1.1611227577205658);
\draw (-2.366549480270502,2.1832727570334876) node[anchor=north west] {$\overline{t^5}$};
\draw (-2.45307148664762,1.419334253492217) node[anchor=north west] {$\overline{t^5+t^{10}}$};
\draw (-2.856840849740836,0.8214693376773098) node[anchor=north west] {$\overline{D}_2$};
\draw (-1.4580684133107664,3.7975080297337374) node[anchor=north west] {$\overline{D}_1$};
\draw [line width=2pt] (5.5,1.5)-- (3.879577221035335,2.587879622356322);
\draw [line width=2pt] (5.5,1.5)-- (6.879577221035335,2.587879622356322);
\draw [line width=2pt] (5.5,1.5)-- (5.5,0);
\draw [line width=2pt] (3.879577221035335,2.587879622356322)-- (3.379577221035335,3.58787962235632);
\draw [line width=2pt] (3.879577221035335,2.587879622356322)-- (4.379577221035335,3.58787962235632);
\draw [line width=2pt] (6.879577221035335,2.587879622356322)-- (6.379577221035335,3.58787962235632);
\draw [line width=2pt] (6.879577221035335,2.587879622356322)-- (7.379577221035339,3.58787962235632);
\begin{scriptsize}
\draw [fill=black] (5.5,1.5) circle (2.5pt);
\draw[color=black] (6.15586814787559,1.4890851603372897) node {$v_1$};
\draw [fill=black] (3.879577221035335,2.587879622356322) circle (2.5pt);
\draw[color=black] (3.3439029406192655,2.565242008804123) node {$v_2$};
\draw[color=black] (4.944560058595942,2.3061672119509966) node {$e_1$};
\draw [fill=black] (6.879577221035335,2.587879622356322) circle (2.5pt);
\draw[color=black] (7.367176237155237,2.631671443894668) node {$v_3$};
\draw[color=black] (5.896302128744237,2.2596666073876146) node {$e_2$};
\draw [fill=black] (5.5,0) circle (2.5pt);
\draw[color=black] (6.213549485460335,0.0010658143090758049) node {$v_0$};
\draw[color=black] (5.146444740142551,0.7450754873231826) node {$e_0$};
\draw [fill=black] (3.379577221035335,3.58787962235632) circle (2.5pt);
\draw[color=black] (3.7620926381086672,3.727757122888665) node {$[\overline{t^5}]$};
\draw [fill=black] (4.379577221035335,3.58787962235632) circle (2.5pt);
\draw[color=black] (5.002241396180688,3.727757122888665) node {$[\overline{t^5+t^{10}}]$};
\draw [fill=black] (6.379577221035335,3.58787962235632) circle (2.5pt);
\draw[color=black] (6.920145870873463,3.727757122888665) node {$[\overline{0}]$};
\draw [fill=black] (7.379577221035339,3.58787962235632) circle (2.5pt);
\draw[color=black] (8.001670950587435,3.7211141793796103) node {$[\overline{t^{10}}]$};
\end{scriptsize}
\end{tikzpicture}
    \caption{The special fiber $\overline{D}$ and its dual graph}
    \label{figspecialfiberdual}
\end{figure}
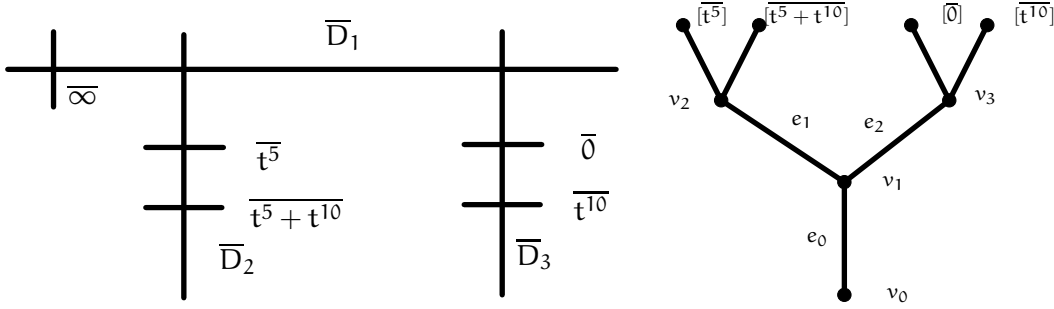
\end{center}

\begin{table}[ht]
\small
    \centering
\begin{tabular}{ |p{3.4cm}|p{5.8cm}|p{5.2cm}|  }
\hline
 Subscheme $\overline{V}$ of $\overline{D}$ & Points of $D(K)$ that specialize to $\overline{V}$ & Associated algebraic object \\
 \hline
 $\overline{\infty}$ & $ \{Y \mid 0< \nu(Y)<5  \}$  & $ R[[X,X_1]]/(XX_1-t^5)$ \\
 
   $\overline{D}_3 \cap \overline{D}_1$ & $\{Y \mid 5  < \nu(Y) < 10\}$ & $ R[[X_1^{-1},X_2]]/(X_1^{-1}X_2-t^5)$ \\
 
 $\overline{D}_3 \setminus \overline{D}_1$ & $\{Y \mid \nu(Y) \ge 10 \}$ & $ R\{X_2^{-1}\}$   \\
 
   $\overline{D}_2 \cap \overline{D}_1$ & $\{Y \mid 5 < \nu(Y-t^5) <10 \}$ & $ R[[X_1^{-1}-1,V]](V(X_1^{-1}-1)-t^5)$ \\
  
  $\overline{D}_2 \setminus \overline{D}_1$ & $\{Y \mid \nu(Y-t^5) \ge 10 \wedge \nu(Y)=5 \}$  & $ R\{V^{-1}\}$  \\ 

     $\overline{D}_1 \setminus ( \overline{D}_3 \cup \overline{D}_2 \cup \{\overline{\infty}\})$ & $\{Y \mid \nu(Y)=5 \wedge \nu(Y-t^5)=5 \}$ & $ R\{X_1^{-1},X_1,(X_1^{-1}-1)^{-1}\}$ \\
     \hline
\end{tabular}
\hspace{5mm}
    \caption{Partitions of $D(K)$}
    \label{tabpartitions}
\end{table}
\end{example}

\subsection{Reduction of covers}

Let $\psi: R[[Z]] \xrightarrow{} R[[X]]$ be a Galois cover. After enlarging our ground field $K$, we may assume that $\psi$ is \textit{weakly unramified} with respect to the Gauss valuation $\nu_0$ (that maps $X$ to $0$), see \cite{MR0321929}. By definition, this means that for all extensions $w$ of $\nu_0$ to the function field of $R[[Z]]$, the ramification index $e(w/\nu_0)$ is equal to $1$. It then follows that the special fiber $\spec R[[Z]] \otimes_R k$ is reduced.

\begin{definition}
We say that the cover $\psi$ has \textit{{\'e}tale reduction} if the induced map $\spec R[[Z]] \otimes_R k \xrightarrow{\phi} \spec R[[X]] \otimes_R k$ is generically {\'e}tale.
\end{definition}

\begin{definition}
If $\psi$ has {\'e}tale reduction, we call $\phi$ the \textit{reduction} of $\psi$, and $\psi$ a \textit{deformation} of $\phi$ over $R$. We say $\psi$ has \textit{good reduction} if it has {\'e}tale reduction, and $\phi$ is smooth.
\end{definition}

Recall Proposition \ref{localAS} shows that a local Artin-Schreier cover is determined by the conductor of its unique branch point. We therefore reformulate Question \ref{questionlocal} as follows.

\begin{question}
\label{questionlocalgoodreduction}
Suppose $\{h_1, \ldots, h_r\} \in \Omega_{h}$. Does there exist a $\mathbb{Z}/p$-cover of $\spec R[[X]]$ with good reduction that has generic branching datum $[h_1, \ldots, h_r]^{\top}$?
\end{question}

\subsection{Refined Swan conductors}
\label{secrefinedswan}
Suppose $\psi$ is a cyclic cover of a closed disc $\spec R\{X\}$ with valuation $\nu_0$. After enlarging $R$, we may assume that $\psi$ is weakly unramified with respect to $\nu_0$. The set of all branch points is called the \textit{branch locus} of $\psi$ and is denoted by $\mathbb{B}(\psi)$. 

We define two invariants that measure the ramification of $\psi$ with respect to the valuation $\nu_0$. The \textit{depth} is
\[ \delta(\psi):=\sw(\psi)/p \in \mathbb{Q}_{\ge 0}, \]
\noindent where $\sw(\psi)$ is the Swan conductor defined identically to one for mixed characteristic \cite[Definition 3.3]{MR3167623} of the character associated to $\psi$. The rational number $\delta(\psi)$ is equal to $0$ if and only if $\psi$ is unramified. If this is the case, then its reduction $\psi$ is well-defined. In particular, if $\psi$ is of order $p$ and $\delta(\psi)=0$, then there exists $u \in \kappa \setminus \wp\kappa$ such that $\phi$ is defined by $y^p-y=u$. We call $u$ the \textit{reduction} of $\psi$. Recall that, by Artin-Schreier theory, $u$ is unique up to adding an element of the form $\wp(a)=a^p-a$, where $a \in \kappa$. We say $\psi$ is \textit{radical} if $\delta(\psi)>0$.

Suppose $\delta(\psi)>0$ and $\pi\in R$ with $\nu(\pi)=1$. Then we can define the \textit{differential Swan conductor} or \textit{differential conductor} with respect to $\pi$
\[ \omega(\psi):=\dsw(\psi) \in \Omega^1_{\kappa}  \]
    \noindent in the same way as \cite[Definition 3.9]{MR3167623} (for mixed characteristic). It is derived from the \textit{refined Swan conductor} $\rsw(\psi)$ and depends on the choice of $\pi$. In particular, we have the relation
    \begin{equation}
        \label{eqnrefinedswan}
        \rsw(\psi)= \pi^{-sw(\psi)} \otimes \dsw(\psi) \in \mathfrak{m}^{-\sw(\psi)} \otimes_{\mathcal{O}_{\mathbb{K}}} \Omega^1_{\kappa},
    \end{equation}
where $\mathfrak{m}=(\pi)$. Note that $\rsw(\psi)$ does not depend on the choice of $\pi$. In this paper, we always pick $\pi=t$.

We call $\delta(\psi\lvert_{\mathcal{D}[s,z]})$ (resp. $\omega(\psi\lvert_{\mathcal{D}[s,z]})$) the depth Swan conductor (resp. the differential Swan conductor) at the \textit{place} $[s,z]$, or just at the place $s$ when $z=0$. We call the ordered pair $(\delta(\psi), \omega(\psi))$ when $\delta(\psi)>0$, or $(\delta(\psi), u)$ when $\delta(\psi)=0$ and $u$ the reduction of $\psi$, the \textit{degeneration type} of $\psi$. We usually refer to $\delta(\psi)$ and $\omega(\psi)$ as the \textit{refined Swan conductors} of $\psi$.

Suppose $\overline{x} \in \overline{\mathcal{D}}$ or $\overline{x}=\overline{\infty}$, and let $\ord_{\overline{x}}: \kappa^{\times} \xrightarrow{} \mathbb{Z}$ be a normalized discrete valuation whose restriction to $k$ is trivial. Then the composite of $v$ with $\ord_{\overline{x}}$ is a valuation on $K$ of rank $2$, which we denote by $\mathbb{K}^{\times} \xrightarrow{} \mathbb{Q} \times \mathbb{Z}$. In \cite{MR904945}, Kato defines a Swan conductor $\sw^K_{\psi} (\overline{x}) \in \mathbb{Q}_{\ge 0} \times \mathbb{Z}$. Its first component is equal to $\sw(\psi)$.
We define the \textit{boundary Swan conductor} with respect to $\overline{x}$
\[ \sw_{\psi}(\overline{x}) \in \mathbb{Z} \]
\noindent as the second component of $\sw^K_{\psi}(\overline{x})$. Geometrically, it gives the instantaneous rate of change of the depth in the direction (mentioned in Example \ref{examplesemistablemodel}) corresponding to $\overline{x}$. See Remark \ref{remarkboundaryswan} and \S \ref{secgoodannulus} for further discussion.

\begin{remark}
\label{remarkboundaryswan}
The invariant $\sw_{\psi}(\overline{x})$ is determined by $\delta(\psi)$ and $\omega(\psi)$ as follows:
\begin{enumerate}
    \item \label{remarkboundaryswan1} If $\delta(\psi)=0$, then
    \[ \sw_{\psi}(\overline{x})=\sw_{\phi}(\overline{x}) \]
    where $\phi$ is the reduction of $\psi$ and $\sw_{\phi}(\overline{x})$ is the usual Swan conductor of $\phi$ with respect to the valuation $\ord_{\overline{x}}$ \cite[IV,\S 2]{MR554237}. That follows immediately from the definitions. We thus have $\sw_{\psi}(\overline{x})\ge 0$ and $\sw_{\psi}(\overline{x})=0$ if and only if $\phi$ is unramified with respect to $\ord_{\overline{x}}$. Note that, if $\phi$ is a one point $\mathbb{Z}/p$-cover of $\spec k[[x]]$ with ramification jump $d$ at $\overline{0}$, then $\sw_{\phi}(\overline{0})$ is equal to $d$ \cite[Cor. 2 to Th. 1]{MR554237}. Hence, a $\mathbb{Z}/p$-cover $\psi$ of $R[[X]]$ is a deformation of $\phi$ over $R$ only if $\sw_{\psi}(\overline{0})=d$.
    \item \label{remarkboundaryswan2} If $\delta(\psi)>0$, then we have
    \[ \sw_{\psi}(\overline{x})=-\ord_{\overline{x}}(\omega(\psi))-1 \]
    \noindent This follows from \cite[Corollary 4.6]{MR904945}.
\end{enumerate}
\end{remark}

\subsubsection{Vanishing cycle formula}

This section adapts \cite[\S 5.3.3]{MR3194815} and generalizes it to the equal characteristic case.

\begin{definition}
A $G$-cover $\psi: Y \xrightarrow{} C \cong \mathbb{P}^1_K= \Proj K[X]$ is called \textit{admissible} if its branch locus $\mathbb{B}(\psi)$ is contained in the open disc $D$ corresponding to $\spec R[[X]]$. 
\end{definition}

\noindent By \S \ref{secreducetolocal}, we can restrict our study to the case where $\psi$ is admissible.

An affinoid subdomain $\mathcal{D} \subseteq C^{\an}$ gives rise to a blowup $C'_R \rightarrow C_R$ with the following properties (\cite{MR1202394}): $C'_R$ is a semistable curve whose special fiber $\overline{C}':=C'_R \otimes_R k$ consists of two smooth irreducible components that meet in exactly one point. The first component is the strict transform of $\overline{C}$, which we may identify with $\overline{C}$. The second component is the exceptional divisor $\overline{Z}$ of the blow-up $C'_R \rightarrow C_R$. It is isomorphic to the projective line over $k$ and intersects $\overline{C}$ in the distinguished point $\overline{x}_0$. By construction, the complement $\overline{Z}^0:= \overline{Z} \setminus \{ \overline{x}_0 \}$ is identified with the canonical reduction $\overline{\mathcal{D}}$ of the affinoid $\mathcal{D}$. This means that the discrete valuation on $\mathbb{K}$ corresponding to the prime divisor $\overline{Z} \subset C'_R$ is equivalent to the Gauss valuation $\nu$ associated with $\mathcal{D}$ (see. \S \ref{secsemistablemodel}) and that its residue field $\kappa$ may be identified with the function field of $\overline{Z}$. 

Let $Y'_R$ denote the normalization of $C'_R$ in $Y$. We obtain the following commutative diagram
\[
\begin{tikzcd}
Y'_R \arrow{r} \arrow[swap]{d} & Y_R \arrow{d} \\
C'_R \arrow{r} & C_R
\end{tikzcd}
\]
\noindent in which the vertical maps are finite $G$-covers and each horizontal map is the composition of an a blowup with a normalization. Let $\overline{W} \subset Y'_R$ be the exceptional divisor of $Y'_R \rightarrow Y_R$. After enlarging the ground field $K$, we may assume that $\psi$ is weakly unramified with respect to $\nu$ (the valuation corresponding to $\mathcal{D}$). Note that this holds if and only if $\overline{W}$ is reduced. That condition allows us to define the refined Swan conductors for the restriction of $\psi$ to $\mathcal{D}$ as in \S \ref{secrefinedswan}. We now choose a closed point $\overline{x} \in \overline{Z}^0=\overline{\mathcal{D}}$ and a point $\overline{y} \in \overline{W}$ lying over $\overline{x}$. We let
\[ U(\mathcal{D},\overline{x}):= ] \overline{x} [_{\mathcal{D}} \]
\noindent denote the residue class of $\overline{x}$ on the affinoid $\mathcal{D}$. Clearly, $U(\mathcal{D}, \overline{x})$ is isomorphic to the open unit disc. Finally, we let $q: \widetilde{\overline{W}} \xrightarrow{} \overline{W}$ denote the normalization of $\overline{W}$ and set
\[ \delta_{\overline{y}}:=\dim_k (q_{\ast}\mathcal{O}_{\widetilde{\overline{W}}}/\mathcal{O}_{\overline{W}})_{\overline{y}}. \]
\noindent Then $\delta_{\overline{y}} \ge 0$ and $\delta_{\overline{y}}=0$ if and only if $\overline{y} \in \overline{W}$ is a smooth point.

The above notation extends to the case $\mathcal{D}=C$ as follows. If $\mathcal{D}=C$, then we let $\overline{Z}:= \overline{C}$ denote the special fiber of the smooth model $C_R$ of $C$ and $\overline{W}:= \overline{Y}$ the special fiber of $Y$. We set $\overline{x}:= \overline{x}_0$ and choose an arbitrary point $\overline{y} \in \overline{W}$ above $\overline{x}_0$. The residue class $U(\mathcal{D},\overline{x})$ is now equal to the open disc $D$, and the invariant $\delta_{\overline{y}}$ is defined in the same way as for $\mathcal{D} \subsetneq C$.

\begin{definition}
Suppose $\psi$ is a $\mathbb{Z}/p$ of $C$, $\mathcal{D} \subseteq C$ is an affinoid, and $\overline{x} \in \overline{\mathcal{D}}$. We define
\[ \mathfrak{C}(\mathcal{D},\psi,\overline{x}):=\bigg\{ \sum_z h_{z} \mid z \in \mathbb{B}(\psi) \cap U(\mathcal{D},\overline{x})   \bigg\}. \]
\noindent where $\mathbb{B}(\psi)$ is the set of branch points of $\psi$, and $h_z$ is the conductor (which is the ramification jump plus one) of the branch point $z$.

\end{definition}

\begin{proposition}
\label{propvanishingcycle}
With the notation as above, we have
\[ \sw_{\psi\lvert_\mathcal{D}}(\overline{x})=\mathfrak{C}(\mathcal{D},\psi,\overline{x})-1-\frac{2\delta_{\overline{y}}}{p-1}. \]
\end{proposition}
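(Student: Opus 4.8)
The plan is to compute the boundary Swan conductor $\sw_{\psi|_{\mathcal{D}}}(\overline{x})$ by relating it, via a local-to-global (Riemann--Hurwitz type) argument on the residue class $U(\mathcal{D},\overline{x})$, to the ramification data of $\psi$ inside that residue class. First I would use the identification of $U(\mathcal{D},\overline{x})$ with an open unit disc and restrict the cover $\psi$ to this disc; the key point is that $\overline{x}$ is a point of the reduction $\overline{\mathcal{D}}=\overline{Z}^0$, so the boundary Swan conductor at $\overline{x}$ measures the ramification of $\psi|_{U(\mathcal{D},\overline{x})}$ along the boundary of that open disc. By Remark \ref{remarkboundaryswan}, $\sw_{\psi|_{\mathcal{D}}}(\overline{x})$ has two interpretations depending on whether the restricted depth is zero or positive, so I would first reduce to analyzing the germ of $\psi$ over the open disc $U(\mathcal{D},\overline{x})$ together with the exceptional curve $\overline{W}$ lying above it.

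Next I would apply the Riemann--Hurwitz / conductor-discriminant formula to the restriction of the $\mathbb{Z}/p$-cover to $U(\mathcal{D},\overline{x})$, viewed as a cover of an open disc. The branch points of $\psi$ that specialize into $\overline{x}$ are exactly the points of $\mathbb{B}(\psi)\cap U(\mathcal{D},\overline{x})$, and each such branch point $z$ contributes its conductor $h_z$ to the degree of the different of the cover over the disc; this is precisely the quantity $\mathfrak{C}(\mathcal{D},\psi,\overline{x})=\sum_z h_z$ appearing in the statement. The boundary Swan conductor then records the "discrepancy at the boundary," i.e.\ the contribution of the outer annulus end of $U(\mathcal{D},\overline{x})$. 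Counting the ramification contributions of the interior branch points against the genus/Euler-characteristic of the fiber $\overline{W}$ over $U(\mathcal{D},\overline{x})$ yields a relation of the shape
\[
\sw_{\psi|_{\mathcal{D}}}(\overline{x}) = \mathfrak{C}(\mathcal{D},\psi,\overline{x}) - 1 - \frac{2\delta_{\overline{y}}}{p-1},
\]
where the $-1$ comes from the single boundary point (the annulus end toward $\overline{x}_0$) and the term $\tfrac{2\delta_{\overline{y}}}{p-1}$ accounts for the failure of $\overline{W}$ to be smooth at $\overline{y}$. The factor $\tfrac{2}{p-1}$ is exactly the ratio converting the $\delta$-invariant (arithmetic genus defect) on $\overline{W}$ into a Swan-conductor contribution, since a $\mathbb{Z}/p$-cover contributes to the different with multiplicity $(p-1)$.

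The main obstacle, and the step requiring the most care, is the precise bookkeeping of the $\delta_{\overline{y}}$ term: I must show that the normalization defect $\delta_{\overline{y}}=\dim_k(q_*\mathcal{O}_{\widetilde{\overline{W}}}/\mathcal{O}_{\overline{W}})_{\overline{y}}$ enters with exactly the coefficient $\tfrac{2}{p-1}$ and with the correct sign. I would establish this by combining the vanishing-cycle computation of \cite[\S 5.3.3]{MR3194815} — adapting it from mixed to equal characteristic, which is legitimate since the Swan and refined Swan conductors are defined identically in both settings (\S \ref{secrefinedswan}) — with the local genus-change formula on the fiber $\overline{W}$. Concretely, one compares the arithmetic genus of the special fiber $\overline{W}$ of $Y'_R$ with that of its normalization $\widetilde{\overline{W}}$, expresses the difference through $\delta_{\overline{y}}$, and matches both sides against the Riemann--Hurwitz count over $U(\mathcal{D},\overline{x})$. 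The equal-characteristic subtlety is ensuring that the Gauss-valuation reduction and the weak-unramifiedness assumption (which guarantees $\overline{W}$ is reduced) let one carry over the mixed-characteristic argument verbatim; once that is checked, the formula follows by collecting the interior contribution $\mathfrak{C}(\mathcal{D},\psi,\overline{x})$, the single boundary contribution $-1$, and the singularity correction $-\tfrac{2\delta_{\overline{y}}}{p-1}$.
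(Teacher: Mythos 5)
Your proposal is correct and follows essentially the same route as the paper: the paper's proof likewise adapts the mixed-characteristic vanishing-cycle formula of Obus--Wewers to equal characteristic, with the single substantive change being exactly the one you identify --- each wild branch point $z$ contributes $(p-1)h_z$ (rather than $p-1$) to the degree of the different, so the count of branch points in their formula is replaced by the sum of conductors $\mathfrak{C}(\mathcal{D},\psi,\overline{x})$. The boundary term $-1$ and the singularity correction $-\tfrac{2\delta_{\overline{y}}}{p-1}$ carry over unchanged, as you say.
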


\begin{proof}
The proposition is the equal characteristic analog of \cite[Proposition 2.14]{MR3552986}, where everything but the characteristic of $R$ is identical. One thus can imitate the proof of \cite[Proposition 2.7]{MR3552986} (which gives a more general version of \cite[Proposition 2.14]{MR3552986}) when $n=1$. The only differences is $\lvert \mathbb{B} \cap U(\overline{x}) \rvert$ in Obus and Wewers' formula is the integer $\mathfrak{C}(\mathcal{D},\psi,\overline{x})$ in our situation, as they are both equal to $\sum_{y \in \text{Ram}(f)\cap V} R_y$ in \cite[Theorem 6.2.3]{MR3552539}, which translates to $\sum_{w \in Y \mid \psi(w) \in  \mathbb{B} \cap U(\overline{x})} R_w$ in ours. That number $R_w$ is, in turn, equal to the degree of the different of the corresponding local extension \cite[Theorem 4.6.4]{MR3552539}, hence is $(p-1)h_{\psi(w)}$ (recall that $h_{\psi(w)}$ is the conductor of the branch point $\psi(w)$).
\end{proof}


We then obtain the following results. 

\begin{corollary}
\label{corgood}
Let $\psi$ be an admissible cyclic cover of $C$ of order $p$. Then
   $$ \mathfrak{C}(\mathcal{D},\psi, \overline{0})=\sum_{z \in \mathbb{B}(\psi)} h_z \ge \sw_{\psi\lvert_{\mathcal{D}}}(\overline{0})+1. $$ 
Also, the cover $\psi$ has good reduction if and only if $\delta({\psi})=0$ and equality holds above.
    
    
    

\end{corollary}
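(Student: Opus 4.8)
The plan is to derive Corollary \ref{corgood} directly from the Vanishing Cycle Formula of Proposition \ref{propvanishingcycle}, applied to the affinoid $\mathcal{D}$ and the point $\overline{x}=\overline{0}$. First I would observe that since $\psi$ is admissible, its entire branch locus $\mathbb{B}(\psi)$ is contained in the open disc $D$, and by our setup the open disc $D$ equals the residue class $U(\mathcal{D},\overline{0})$. Therefore every branch point $z$ specializes to $\overline{0}$, which gives $\mathbb{B}(\psi) \cap U(\mathcal{D},\overline{0}) = \mathbb{B}(\psi)$, and hence the immediate identity
\[
\mathfrak{C}(\mathcal{D},\psi,\overline{0}) = \sum_{z \in \mathbb{B}(\psi)} h_z.
\]
This disposes of the equality on the left of the displayed chain.

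Next I would establish the inequality $\mathfrak{C}(\mathcal{D},\psi,\overline{0}) \ge \sw_{\psi|_{\mathcal{D}}}(\overline{0})+1$. Rearranging the formula in Proposition \ref{propvanishingcycle} gives
\[
\mathfrak{C}(\mathcal{D},\psi,\overline{0}) = \sw_{\psi|_{\mathcal{D}}}(\overline{0}) + 1 + \frac{2\delta_{\overline{y}}}{p-1}.
\]
Since $\delta_{\overline{y}} \ge 0$ always (it is a dimension of a skyscraper module, vanishing exactly when $\overline{y}$ is a smooth point of $\overline{W}$) and $p-1 > 0$, the extra term $2\delta_{\overline{y}}/(p-1)$ is nonnegative, and the claimed inequality follows at once, with equality if and only if $\delta_{\overline{y}}=0$, i.e., if and only if $\overline{W}$ is smooth at $\overline{y}$.

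It remains to characterize good reduction. Recall that $\psi$ has good reduction precisely when it has \'etale reduction and the reduction $\phi$ is smooth; \'etale reduction here is governed by the depth, so the condition $\delta(\psi)=0$ encodes that $\psi$ reduces (its reduction is well-defined), while the smoothness of the special fiber is exactly the statement that $\overline{W}$ is smooth, i.e., $\delta_{\overline{y}}=0$. Thus I would argue that $\psi$ has good reduction if and only if $\delta(\psi)=0$ and $\delta_{\overline{y}}=0$, and by the equality criterion of the previous paragraph the latter condition is equivalent to equality holding in the displayed chain. I expect the main subtlety to lie in this last equivalence: one must be careful that ``good reduction'' (\'etale reduction plus a \emph{smooth} reduction $\phi$) translates correctly into the two separate vanishing conditions $\delta(\psi)=0$ and $\delta_{\overline{y}}=0$, invoking the definitions of \S\ref{secrefinedswan} together with the interpretation of $\delta_{\overline{y}}$ as the measure of the singularity of $\overline{W}$ over $\overline{y}$.
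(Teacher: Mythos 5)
Your proposal is correct and follows essentially the same route as the paper's own proof: both derive the identity $\mathfrak{C}(\mathcal{D},\psi,\overline{0})=\sum_{z\in\mathbb{B}(\psi)}h_z$ from admissibility ($\mathbb{B}(\psi)\subsetneq D=U(\mathcal{D}[0],\overline{0})$), read the inequality and its equality case off the vanishing cycle formula via $\delta_{\overline{y}}\ge 0$, and translate good reduction into $\delta(\psi)=0$ together with smoothness of $\overline{W}$ above $\overline{0}$, i.e.\ $\delta_{\overline{y}}=0$. No gaps; your write-up is just a more explicit version of the paper's argument.
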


\begin{proof}

The proof adapts the one for \cite[Corollary 5.12]{MR3194815}. The inequality in the first assertion follows immediately from Proposition \ref{propvanishingcycle} since $\mathbb{B}(\psi) \subsetneq D=U(\mathcal{D}[0],\overline{0})$ (because we assume $\psi$ is admissible). In addition, the cover $\psi$ has good reduction if and only if $\delta(\psi)=0$ and $\overline{Y}=\overline{W}$ is smooth in any point $\overline{y}$ above the distinguished point $\overline{0}$. The latter condition is equivalent to $\delta_{\overline{y}}=0$. Thus, the rest also follows from Proposition \ref{propvanishingcycle}. 
\end{proof}

\begin{remark}
In the situation of Corollary \ref{corgood}, when $\psi$ has good reduction, the reduction $\phi$ of its restriction to $D$ is a one-point-cover of $k[[t]]$ of conductor $\mathfrak{C}(\mathcal{D},\psi, \overline{0})$. That is because $\sw_{\psi\lvert_{\mathcal{D}}}(\overline{0})$ coincides with $\sw_{\phi}(\overline{0})$ (Remark \ref{remarkboundaryswan} (\ref{remarkboundaryswan1})), and the latter is equal to the ramification jump of $\phi$ at $\overline{0}$. Corollary \ref{corgood} is thus equivalent to the different criterion (Proposition \ref{propdifferentcriterion}). We will utilize the former for the rest of the paper. 
\end{remark}

\begin{corollary}
\label{corswgood}
Let $\psi$ be an admissible cyclic cover of $C$ of order $p$, let $\mathcal{D} \subseteq C$ be an affinoid, and let $\overline{x}$ be a point on the canonical reduction $\overline{\mathcal{D}}$ of $\mathcal{D}$. Then
\begin{equation*}
    \sw_{\psi\lvert_\mathcal{D}}(\overline{x}) \le \mathfrak{C}(\mathcal{D},\psi,\overline{x})-1.
\end{equation*}
\noindent Moreover, if $\psi$ has good reduction, then equality holds.
\end{corollary}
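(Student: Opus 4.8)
\medskip

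The plan is to deduce this directly from Proposition \ref{propvanishingcycle}, which gives the exact equality
\[
\sw_{\psi\lvert_\mathcal{D}}(\overline{x})=\mathfrak{C}(\mathcal{D},\psi,\overline{x})-1-\frac{2\delta_{\overline{y}}}{p-1}
\]
for a chosen point $\overline{y}\in\overline{W}$ lying over $\overline{x}$. The inequality $\sw_{\psi\lvert_\mathcal{D}}(\overline{x})\le\mathfrak{C}(\mathcal{D},\psi,\overline{x})-1$ is then immediate, since $\delta_{\overline{y}}\ge 0$ by the discussion preceding the proposition (it is the $\delta$-invariant measuring the failure of $\overline{W}$ to be smooth at $\overline{y}$, hence a nonnegative integer). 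This handles the first assertion with essentially no work, exactly as the analogous inequality in Corollary \ref{corgood} followed from the case $\overline{x}=\overline{0}$.

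\medskip

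For the second assertion, I would argue as follows. Suppose $\psi$ has good reduction. By definition this means $\psi$ has \'etale reduction and its reduction $\phi$ is smooth; in particular $\delta(\psi)=0$ and the special fiber $\overline{W}=\overline{Y}$ is smooth at every point above the relevant locus. Since $\overline{y}$ lies over $\overline{x}\in\overline{\mathcal{D}}$ and $\overline{W}$ is smooth at $\overline{y}$, we have $\delta_{\overline{y}}=0$ (recall $\delta_{\overline{y}}=0$ if and only if $\overline{y}$ is a smooth point of $\overline{W}$). Substituting $\delta_{\overline{y}}=0$ into the formula of Proposition \ref{propvanishingcycle} yields the equality
\[
\sw_{\psi\lvert_\mathcal{D}}(\overline{x})=\mathfrak{C}(\mathcal{D},\psi,\overline{x})-1,
\]
as claimed.

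\medskip

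The only point requiring care — and the one I expect to be the main obstacle — is verifying that ``good reduction'' forces $\delta_{\overline{y}}=0$ at the point $\overline{y}$ over our chosen $\overline{x}$, rather than merely over the distinguished point $\overline{0}$. For the closed point $\overline{x}=\overline{0}$ this is precisely the argument already given in Corollary \ref{corgood}. For a general $\overline{x}\in\overline{\mathcal{D}}$ one must know that good reduction of $\psi$ (equivalently, smoothness of $\overline{Y}$ and \'etaleness of the reduction) guarantees smoothness of $\overline{W}$ at \emph{every} point lying over $\overline{\mathcal{D}}$, so that the blow-up/normalization process $Y'_R\to Y_R$ introduces no singularities on the exceptional divisor $\overline{W}$. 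This is built into the setup of the vanishing cycle formula: when $\psi$ has good reduction the cover is already \'etale away from the branch locus, the normalization $Y'_R$ of the semistable blow-up $C'_R$ is itself smooth along $\overline{W}$, and so $\delta_{\overline{y}}=0$ for any $\overline{y}$ over any $\overline{x}\in\overline{\mathcal{D}}$. Once this is spelled out, the equality follows and the proof is complete.
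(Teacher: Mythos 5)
Your argument is correct and is essentially the paper's own: the paper proves this corollary by citing the parallel mixed-characteristic argument of Obus--Wewers, which is exactly the deduction you give from Proposition \ref{propvanishingcycle} via $\delta_{\overline{y}}\ge 0$ for the inequality and $\delta_{\overline{y}}=0$ under good reduction for the equality. You have also correctly isolated the one non-formal step (that good reduction forces $\delta_{\overline{y}}=0$ over every affinoid $\mathcal{D}$, not only at the distinguished point $\overline{0}$ treated in Corollary \ref{corgood}), which is precisely the content carried over from the cited proof.
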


\begin{proof}
Parallel to the proof of \cite[Corollary 5.13]{MR3194815}.
\end{proof}

The following result follows immediately from Corollary \ref{corswgood} and Remark \ref{remarkboundaryswan}.

\begin{corollary}
\label{coromegagood}
In the same situation of Corollary \ref{corswgood}, if $\delta({\psi\lvert_{\mathcal{D}}})>0$ and $\overline{x} \neq \infty$, we have
\begin{equation*}
    \ord_{\overline{x}}(\omega(\psi\lvert_{\mathcal{D}})) \ge -\mathfrak{C}(\mathcal{D},\psi, \overline{x})
\end{equation*}
\noindent with equality if $\psi$ has good reduction. 

\end{corollary}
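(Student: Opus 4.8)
The plan is to derive Corollary \ref{coromegagood} directly from Corollary \ref{corswgood} by translating the Swan conductor inequality into a statement about the order of vanishing of the differential Swan conductor, using the precise relationship recorded in Remark \ref{remarkboundaryswan} (\ref{remarkboundaryswan2}). The hypothesis $\delta(\psi|_{\mathcal{D}}) > 0$ places us squarely in the radical case, which is exactly the regime where Remark \ref{remarkboundaryswan} (\ref{remarkboundaryswan2}) applies.

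First I would invoke Remark \ref{remarkboundaryswan} (\ref{remarkboundaryswan2}), which under the assumption $\delta(\psi|_{\mathcal{D}}) > 0$ gives the identity
\[
\sw_{\psi|_{\mathcal{D}}}(\overline{x}) = -\ord_{\overline{x}}(\omega(\psi|_{\mathcal{D}})) - 1.
\]
Next I would bring in Corollary \ref{corswgood}, which states that $\sw_{\psi|_{\mathcal{D}}}(\overline{x}) \le \mathfrak{C}(\mathcal{D},\psi,\overline{x}) - 1$, with equality when $\psi$ has good reduction. Substituting the identity above into this inequality yields
\[
-\ord_{\overline{x}}(\omega(\psi|_{\mathcal{D}})) - 1 \le \mathfrak{C}(\mathcal{D},\psi,\overline{x}) - 1,
\]
and cancelling the $-1$ from both sides and rearranging gives exactly $\ord_{\overline{x}}(\omega(\psi|_{\mathcal{D}})) \ge -\mathfrak{C}(\mathcal{D},\psi,\overline{x})$, as desired. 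The restriction $\overline{x} \neq \infty$ ensures that $\omega(\psi|_{\mathcal{D}})$ and the boundary Swan conductor $\sw_{\psi|_{\mathcal{D}}}(\overline{x})$ are being compared at a genuine closed point of the canonical reduction where $\ord_{\overline{x}}$ is the relevant normalized valuation, so that the formula from Remark \ref{remarkboundaryswan} is applicable without modification.

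For the equality claim, I would simply observe that when $\psi$ has good reduction, Corollary \ref{corswgood} gives equality in the Swan conductor bound, and since the substitution step is reversible, equality propagates to the order-of-vanishing statement. There is essentially no independent obstacle here; the only point requiring care is confirming that the sign conventions in the definition of $\omega(\psi|_{\mathcal{D}})$ as a differential form, together with the normalization of $\ord_{\overline{x}}$, line up so that the formula of Remark \ref{remarkboundaryswan} (\ref{remarkboundaryswan2}) transcribes directly. Since this result is asserted to follow \emph{immediately} from the two cited results, I expect the entire proof to amount to this one-line algebraic manipulation, exactly as the preamble "The following result follows immediately from Corollary \ref{corswgood} and Remark \ref{remarkboundaryswan}" advertises.
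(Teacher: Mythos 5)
Your proposal is correct and follows exactly the paper's argument: the paper gives no separate proof beyond the remark that the corollary follows immediately from Corollary \ref{corswgood} and Remark \ref{remarkboundaryswan}, and your substitution of $\sw_{\psi\lvert_{\mathcal{D}}}(\overline{x}) = -\ord_{\overline{x}}(\omega(\psi\lvert_{\mathcal{D}})) - 1$ into the bound $\sw_{\psi\lvert_{\mathcal{D}}}(\overline{x}) \le \mathfrak{C}(\mathcal{D},\psi,\overline{x}) - 1$ is precisely that intended one-line derivation, including the propagation of equality in the good-reduction case.
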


\begin{remark}
\label{remarkgooddiff}
Corollary \ref{corgood} and Corollary \ref{coromegagood} indicate that, for $\psi$ to have good reduction, $\omega({\psi\lvert_{\mathcal{D}}})$ cannot have any zeros besides one at infinity for any affinoid $\mathcal{D} \subseteq C$.
\end{remark}

\begin{remark}
\label{remarkgeneralizevanishingformula}
The results in this section easily generalize to all cyclic covers. We will discuss it in the "forthcoming paper" mentioned in Remark \ref{remarkdeformationASW}
\end{remark}

\subsubsection{Refined Swan conductor of Artin-Schreier extensions}

In this section, we study the depth and differential Swan conductors of an Artin-Schreier cover of a closed disc $\spec R\{X\}$. Suppose $\psi$ is a $\mathbb{Z}/p$-cover of the disc defined by
\[ Y^p-Y=u, \]
\noindent where $u \in \mathbb{K} \setminus \wp \mathbb{K}$.

\begin{definition}
Such an $u \in \mathbb{K}\setminus \mathcal{O}_{\mathbb{K}}$ is said to be \textit{reduced} if $u =\tau  \omega$, with $\tau \in \mathbb{K}$, $\nu(\tau)<0$, $\omega \in \mathcal{O}_\mathbb{K}^{\times}$, and $\overline{\omega} \not \in \kappa^p$. We say that $u$ is \textit{reducible} if there exists an $a \in \mathbb{K}^{\times}$ such that $u+a^p-a$ is reduced. 
\end{definition}

\begin{remark}
In our situation, as we can always replace $R$ by its finite extensions, we may assume that $u$ is reducible.
\end{remark}

\begin{proposition}
\label{proprefinedswanorderp}
Suppose $\psi$ is a $\mathbb{Z}/p$-cover of $R\{X\}$ defined by
\begin{equation}
\label{eqnrefinedswanorderp}
     Y^p-Y= u,
\end{equation}
\noindent where $u \in \mathbb{K}$ is reduced when $\nu(u)<0$. If $u=\pi^{-n} \omega$ ($\pi$ is a fixed uniformizer of $R$), with $\omega \in \mathcal{O}_\mathbb{K}^{\times}$, $\overline{\omega}\not\in \kappa^p$ , and $n \in \mathbb{Q}_{\ge 0}$, then the depth of $\psi$ is $n/p$ and 
\begin{enumerate}
    \item when $n =0$, the degeneration is $\overline{\omega}$,
    \item when $n >0$, its differential Swan conductor is $d\overline{\omega}$.
\end{enumerate}
\end{proposition}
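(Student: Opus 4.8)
The plan is to read these invariants straight off Kato's definitions, which are imported verbatim from the mixed-characteristic references and are insensitive to the characteristic of $\mathbb{K}$: what matters is only that $(\mathbb{K}, \nu_0)$ is a complete discrete valuation field whose residue field $\kappa = k(x)$ has characteristic $p$ and is \emph{imperfect}. The character attached to $\psi$ is the Artin-Schreier class of $u$ in $H^1(\mathbb{K}, \mathbb{Z}/p) = \mathbb{K}/\wp\mathbb{K}$, and everything will follow from locating this class in Kato's ramification filtration and reading off its refined symbol.

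First I would dispose of the case $n=0$. Here $u = \omega \in \mathcal{O}_\mathbb{K}^{\times}$ with $\overline{\omega}\notin\kappa^p$, so $\chi_u$ is unramified at $\nu_0$ and by definition $\sw(\psi)=0$, whence $\delta(\psi)=\sw(\psi)/p=0$. Reducing $Y^p-Y=u$ modulo $\mathfrak{m}$ produces the extension $y^p-y=\overline{\omega}$ of $\kappa$, which is a nontrivial $\mathbb{Z}/p$-extension precisely because $\overline{\omega}\notin\kappa^p$; hence the reduction of $\psi$ is $\overline{\omega}$ and the degeneration type is $(0,\overline{\omega})$, giving assertion (1).

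For $n>0$ the hypothesis that $u$ is reduced says $u=\pi^{-n}\omega$ with $\omega$ a unit and $\overline{\omega}\notin\kappa^p$. This is exactly the normal form that pins down $\sw(\psi)$: the Swan conductor is the least $m$ for which $u$ is congruent modulo $\wp\mathbb{K}$ to an element of $\mathfrak{m}^{-m}$, and the only obstruction to lowering the pole order below $n$ is $\overline{\omega}\in\kappa^p$ (this is the feature of the imperfect-residue-field theory that replaces the classical prime-to-$p$ condition). Since $\overline{\omega}\notin\kappa^p$, no lowering is possible, so $\sw(\psi)=n$ and $\delta(\psi)=n/p$. For the differential conductor I would invoke Kato's explicit formula that $\rsw(\chi_u)$ is represented by $du\in\mathfrak{m}^{-n}\otimes_{\mathcal{O}_\mathbb{K}}\Omega^1$. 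Expanding,
\[ du = \pi^{-n}\,d\omega - n\,\pi^{-n-1}\omega\,d\pi = \pi^{-n}\otimes\Big(d\omega - n\,\omega\,\tfrac{d\pi}{\pi}\Big), \]
and passing to $\Omega^1_\kappa$ the logarithmic term $\tfrac{d\pi}{\pi}=d\log\pi$ dies (as $\overline{\pi}=0$) while $d\omega\mapsto d\overline{\omega}$. Comparing with the defining relation $\rsw(\psi)=\pi^{-\sw(\psi)}\otimes\dsw(\psi)=\pi^{-n}\otimes\dsw(\psi)$ yields $\omega(\psi)=\dsw(\psi)=d\overline{\omega}$, which is assertion (2).

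The main obstacle is bookkeeping rather than conceptual: one must match Kato's normalization and sign convention for $\rsw$ with those adopted in \cite{MR3167623} (so that $du$, and not $-du$, represents the class and only $d\overline{\omega}$ survives the projection $\Omega^1_{\mathcal{O}_\mathbb{K}}\to\Omega^1_\kappa$ annihilating $d\log\pi$), and confirm that when $n\in\mathbb{Q}_{>0}$ — which only arises after enlarging $R$ so that an element of valuation $-n$ exists — the Swan conductor scales to the same rational value $n$ under the normalization $\nu(\pi)=1$. Both are routine once the reduced normal form $u=\pi^{-n}\omega$ is available.
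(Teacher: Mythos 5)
Your conclusions are right, but you reach them by a genuinely different route from the paper. The paper does not invoke the closed formula $\rsw(\chi_u)=du$ at all: it first proves an auxiliary norm-theoretic statement (Proposition \ref{propgeneralcalculaterefined}, extracted from Kato's symbol lemma in \cite{MR603953}), namely that for a $\mathbb{Z}/p$-extension with purely inseparable residue extension one has $\rsw(\psi)=b^{-1}\otimes d\overline{y}/\overline{y}$ with $y=N(x)$, $b=N(\sigma(x)/x-1)$. It then substitutes $Z=Y\pi^{n/p}$ to rewrite the equation as $Z^p-\pi^{n(p-1)/p}Z=\omega$, checks that $\overline{Z}^p=\overline{\omega}\notin\kappa^p$ makes the residue extension purely inseparable, and computes $a=1/Y$, $y=\omega$, $b=\pi^n/\omega$, giving $\rsw(\psi)=\pi^{-n}\otimes d\overline{\omega}$ directly. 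Your approach buys brevity and makes the structural reason transparent (the reduced form is exactly the normal form for the Brylinski--Kato filtration, and $\rsw$ of an Artin--Schreier class in best form is $du$); the paper's approach buys self-containedness, since it only needs one cited lemma on symbols and never has to discuss which differential module $du$ lives in. Your identification of $\sw(\{u\})$ with the least $m$ such that $u\in\mathfrak{m}^{-m}+\wp\mathbb{K}$ is the correct description of Kato's filtration in equal characteristic for $s=1$ Witt vectors, but it is a cited fact rather than the definition, so it deserves an explicit reference.

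One justification in your argument is wrong as written, even though the answer it produces is right. You discard the term $n\,\omega\,d\pi/\pi$ ``as $\overline{\pi}=0$.'' But Kato's $\rsw$ lives in $\mathfrak{m}^{-n}\otimes\Omega^1_{\mathcal{O}_{\mathbb{K}}}(\log)\otimes\kappa$, where $d\log\pi$ is a nonzero generator; the $d\log\pi$-component of $du$ is $-n\overline{\omega}\neq 0$ and does not vanish. What is true is that the differential Swan conductor $\dsw$, as defined in \cite{MR3167623} and used in this paper, is by convention the component of $\rsw$ lying in the summand $\Omega^1_\kappa$ of $\Omega^1_{\mathcal{O}_{\mathbb{K}}}(\log)\otimes\kappa\cong\Omega^1_\kappa\oplus\kappa\,d\log\pi$, so the logarithmic term is projected away rather than dying. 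You flag this normalization issue yourself in your closing paragraph, so the fix is only to replace the parenthetical reason with the correct one; with that repair, and with references supplied for the two Kato facts you quote, the argument is complete.
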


\begin{proof}
The case $n=0$ is immediate from the definition of the refined Swan conductors. When $n>0$, we first describe a standard process to calculate $\rsw(\psi)$ (see (\ref{eqnrefinedswan})). 

\begin{proposition}
\label{propgeneralcalculaterefined}
Suppose $L/K$, which we denote by $\psi$, is a $\mathbb{Z}/p=\langle \sigma \rangle$-extension of complete discrete valuation field whose residue extension $\overline{L}/\overline{K}$ is purely inseparable. Fix an element $x \in \mathcal{O}_L^{\times}$ with $\overline{L}=\overline{K}[\overline{x}]$ and an element $\pi_K \in K$ with valuation one. Set $y:=N_{L/K}(x)$ (hence $\overline{y}=\overline{x}^p \in \overline{K}\setminus \overline{K}^p$), $a:=\sigma(x)/x-1$ and $b:=N_{L/K}(a)$. Then the refined Swan conductor of $\psi$ with respect to the valuation of $K$ is
\begin{equation}
    \label{eqnstandardrsw}
    \rsw(\psi)=b^{-1} \otimes \frac{d\overline{y}}{\overline{y}}\in \mathfrak{m}_K^{-\sw(\psi)} \otimes_{\mathcal{O}_K} \Omega^1_{\overline{K}}. 
\end{equation}
where $\mathfrak{m}_K=(\pi_K)$ is an ideal of $\mathcal{O}_K$.
\end{proposition}
\begin{proof}
By \cite[\S 3.3, Lemma 15]{MR603953}, with the notation as above, we have
\[ \{\psi, 1-bc\}_K= \{i_1(\overline{c}), y\}_K =i_2\bigg(\overline{c} \frac{d\overline{y}}{\overline{y}}\bigg) \]
for all $c \in \mathcal{O}_K$, where $\{\_ , \_ \}_K$, $i_1$, and $i_2$ are defined in \cite[\S 3]{MR904945}. The second equality follows from the definition of $i_2$. Equation (\ref{eqnstandardrsw}) then follows from the definition of $\rsw$ and \cite[Theorem 3.6 (2)]{MR904945}.
\end{proof} 
Let us now go back to the situation of Proposition \ref{proprefinedswanorderp}. One may rewrite (\ref{eqnrefinedswanorderp}) as
\[ Z^p-\pi^{n(p-1)/p}Z=\omega \]
simply by setting $Z=Y\pi^{n/p}$. Hence $\overline{Z}^p=\overline{\omega}$. The assumption $\overline{\omega} \not\in \kappa^p$ suggests that the residue field upstair is $\kappa[\overline{Z}]$. Set $x:=Z$. Then $a=\sigma(x)/x-1=\pi^{n/p}/Z=1/Y$, $y=N_{\mathbb{L}/\mathbb{K}}(x)=\omega$, and $b=N_{\mathbb{L}/\mathbb{K}}(a)=\pi^n/\omega$. Apply Proposition \ref{propgeneralcalculaterefined}, we obtain
\[ \rsw(\psi)= \frac{\omega}{\pi^n} \otimes \frac{d \overline{\omega}}{\overline{\omega}}= (\pi^n)^{-1} \otimes d\overline{\omega}. \]
The rest then follows from the definition of the conductors in \S \ref{secrefinedswan}.
\end{proof}

\begin{remark}
The proposition implies, if $\psi$ is an Artin-Schreier cover where $\delta(\psi)>0$, then its differential conductor $\omega(\psi)$ is an exact differential form. That is no longer true, however, for a cyclic $G$-cover (for instance, $G=\mathbb{Z}/p^2$) in general. More details will also be given in the "forthcoming paper" to which Remark \ref{remarkdeformationASW} refers.
\end{remark}

\begin{example}
\label{examplecomputeswan}

Recall that, in example \ref{examplemain}, the cover $\psi$ is defined by
\begin{equation}
\label{eqnr0}
   Y^5-Y=\frac{-2X+t^{10}}{(-2)X^5(X-t^{10})^2(X-t^5)^5}. 
\end{equation}
Fix a valuation $\nu_0$ such that $\nu_0(t)=1$. As the generic fiber is branched at three points $X=0, X=t^5$, and $X=t^{10}$, we follow \S \ref{secsemistablemodel} to construct the dual graph of the associated semistable model as follows.
\tikzstyle{level 1}=[level distance=3cm, sibling distance=2cm]
\tikzstyle{level 2}=[level distance=2cm, sibling distance=1cm]
\tikzstyle{level 2}=[level distance=2cm, sibling distance=1cm]
\tikzstyle{bag} = [text width=6em, text centered]
\tikzstyle{end} = [circle, minimum width=3pt,fill, inner sep=0pt]
\[ 
\begin{tikzpicture}[grow=right, sloped]
\node[end, label=left:{$v_0$}]{}
child{
        node[end]{}
    child {
                node[end, label=right:
                    {$[t^5]$}] {}
                edge from parent
            }
    child {
        node[end]{}        
            child {
                node[end, label=right:
                    {$ [t^{10}]$}] {}
                edge from parent
            }
            child {
                node[end, label=right:
                    {$ [0]$}] {}
                edge from parent
            }
            edge from parent 
            node[above] {$e_1$}
    }
    edge from parent
    node[above] {$e_0$}
    };
   
\end{tikzpicture}
\]
\noindent We set the orientation of the graph to be the direction from its leaves to its root $v_0$.

At the initial vertex of $e_1$, which corresponds to a punctured disc of radius $5^{-2}$ defined by $\spec R\{t^{-10}X,t^{10}X^{-1},t^{10}(X-t^{10})^{-1} \}$, we set $X_2:=Xt^{-10}$. Replacing $X$ by $X_2$ equates to looking at the restriction of the cover to the sub-disc $\mathcal{D}[10]= \spec R\{t^{-10}X\}$. We rewrite (\ref{eqnr0}) as
 \begin{equation}
 \label{eqnr2}
     Y^5-Y=\frac{-2X_2+1}{(-2)X_2^5(X_2-1)^2(X_2t^5-1)^5t^{5\cdot 17 }}
 \end{equation}
\noindent Let $x_2$ the image of $X_2$ in the residue field of $\mathcal{D}[10]$. The derivative of the reduction modulo $5$ of $\frac{-2X_2+1}{(-2)X_2^5(X_2-1)^2(X_2t^5-1)^5}$ is $\omega(2)=\frac{dx_2}{x_2^4(x_2-1)^3}$. Hence, it follows from Proposition \ref{proprefinedswanorderp} that $\omega(2)$, after replacing $x_2$ by $x$, is the differential Swan conductor of $\psi$ restricting to $\mathcal{D}[10]$, and its depth conductor is $17$. Its boundary Swan conductors are $\sw_{\psi \lvert_{\mathcal{D}[10]}}(\overline{\infty})=-\ord_{\infty}(\omega(2))-1=-6$, $\sw_{\psi\lvert_{\mathcal{D}[10]}}(x_2)=-\ord_{x_2}(\omega(2))-1=3$, and $\sw_{\psi \lvert_{\mathcal{D}[10]}}(x_2-1)=-\ord_{x_2-1}(\omega(2))-1=2$.

At the initial vertex of $e_0$, which resembles the punctured $\spec R\{t^{-5}X, t^{5}X^{-1}, t^5(X-t^5)^{-1} \}$. Define $X_1:=Xt^{-5}$. We then rewrite (\ref{eqnr0}) as
 \[Y^5-Y=\frac{-2X_1+t^5}{(-2)X_1^5(X_1-t^5)^2(X_1-1)^5t^{5\cdot 11}}.\]
\noindent Apply the same argument as above, we see that the degeneration type of $\psi$ on $\spec R\{X_1\}$ is $\Big(11, \frac{dx}{x^7(x-1)^5}\Big)$.

Finally, when $r=0$, then $X_0=X$, the depth is $0$, and the reduction $\phi$ is defined by $y^5-y=\frac{1}{x^{11}}$. Therefore, we have $\mathfrak{C}(\psi, \overline{0})=5+3+4=12=\sw_{\phi}(\overline{0})+1$. Hence, $\psi$ has good reduction by Corollary \ref{corgood}.
\end{example}

In the next example, we will go into detail of describing how the degeneration data vary along the dual graph.

\begin{example}
\label{exnonflat}

Consider the $\mathbb{Z}/2$-cover $\psi$ of $\spec R\{X\}$ that is given by 
\begin{equation}
\label{eqn1june12}
    Y^2-Y =\frac{1}{X(X-t^2)}:=f(X,t). 
\end{equation}

\noindent The associated dual graph of the semistable model is as follows.
\tikzstyle{level 1}=[level distance=3cm, sibling distance=3cm]
\tikzstyle{level 2}=[level distance=2cm, sibling distance=2cm]
\tikzstyle{bag} = [text width=6em, text centered]
\tikzstyle{end} = [circle, minimum width=3pt,fill, inner sep=0pt]
\[ 
\begin{tikzpicture}[grow=right, sloped]
\node[end, label=left:{$v_0$}]{}
child{
        node[end, label=above:{$v_1$}]{}
    child {
                node[end, label=right:
                    {$[t^2]$}] {}
                edge from parent
            }
    child {
                node[end, label=right:
                    {$[0]$}] {}
                edge from parent
            }
    edge from parent
    node[above] {$e_0$}
    };
\end{tikzpicture}
\]
\noindent Recall from \S \ref{secsemistablemodel} that the vertex $v_1$ represents the subdisc $\mathcal{D}[2]:=\spec R\{t^{-2}X\}$. Substitute $t^{-2}X$ by $X_1$ in (\ref{eqn1june12}), we obtain
\begin{equation*}
    f(X_1,t)=t^{-2 \cdot 2} \frac{1}{X_1(X_1-1)}.
\end{equation*}
\noindent As $1/(x_1(x_1-1)) \notin (k(x_1)^{\times})^2$, we obtain $\delta(\psi\lvert_{\mathcal{D}[2]})=2$ and $\omega(\psi\lvert_{\mathcal{D}[2]})=\frac{dx}{x^2(x-1)^2}$.

Set $X_{r}:=Xt^{-2r}$. If $r > 1$, we can write $f(X,t)$ as
\[ f(X_{r},t)=t^{-2(r+1)}\frac{1}{X_{r}(X_{r}t^{2r-2}-1)}. \]
\noindent Again, as $ \frac{-1}{x_{r}} \notin (k(x_{r})^{\times})^2$, Proposition \ref{proprefinedswanorderp} shows $\delta(\psi\lvert_{\mathcal{D}[2r]})=r+1$ and $\omega(\psi\lvert_{\mathcal{D}[2r]})=\frac{dx}{x^2}$.

Suppose $r<1$. Then $[f]_{2r}$ is always a square. Set $a:=1/X^2$. We get
\[ f(X,t)+a^2-a= \frac{X^2+t^2X+t^2}{X^2(X-t^2)}=:g(X,t).\]
\noindent Moreover, we can rewrite $g(X,t)$ as
\[ g(X_{r},t)= t^{-4r} \frac{t^{2-2r}-t^{2}X_{r}+X_{r}^2t^{2r}}{X_{r}^2(X_{r}-t^{2-2r})}. \]
\noindent Note that $2-2r<2r$ for $r \in (1/2,1]$ and $2-2r>2r$ otherwise. Hence, one can describe $\delta(\psi\lvert_{\mathcal{D}[2r]})$ in terms of $r$ as follows,
\begin{equation*}
    \delta(\psi\lvert_{\mathcal{D}[2r]})= \begin{cases} 
      r & 0 \le r \le \frac{1}{2}, \\
      3r-1 & \frac{1}{2} \le r \le 1, \\
      r+1 & 1 \le r. 
   \end{cases}
\end{equation*}
\noindent Below is the graph of $\delta(\psi\lvert_{\mathcal{D}[2r]})$ with respect to $r$. Note that it is a piecewise linear function with respect to $r$. This phenomenon will be discussed further in \S \ref{secgoodannulus}.
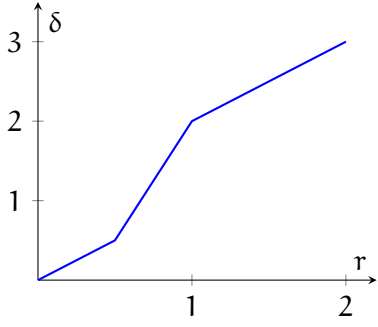
\begin{figure}[ht]
    \centering
\begin{tikzpicture}[
  declare function={
    func(\x)= and(\x >= 0, \x < 1/2) * (x)   +
              and(\x >= 1/2, \x < 1) * (3*\x-1)     +
              and(\x >= 1, \x < 3) * (\x+1)
   ;
  }
]
\begin{axis}[
  axis x line=middle, axis y line=middle,
  ymin=0, ymax=3.5, ytick={0,...,3}, ylabel=$\delta$,
  xmin=0, xmax=2.2, xtick={0,...,2}, xlabel=$r$,
  domain=0:2,samples=101, scale=0.65 
]

\addplot [blue,thick] {func(x)};
\end{axis}
\end{tikzpicture} 
    \caption{Graph of $\delta(\psi\lvert_{\mathcal{D}[2r]})$}
    \label{figgraphexnoflat}
\end{figure}

\noindent Moreover, we can write down the equation of $\omega(\psi\lvert_{\mathcal{D}[2r]})$ with respect to $r$ as follows.
\begin{equation*}
    \omega(\psi\lvert_{\mathcal{D}[2r]})= \begin{cases} 
      \frac{dx}{x^2} & 0 < r < \frac{1}{2}, \\
      \frac{(1+x^2)dx}{x^4} & r=\frac{1}{2}, \\
      \frac{dx}{x^4} & \frac{1}{2} < r < 1, \\
      \frac{dx}{x^2(x-1)^2} & r=1,\\
      \frac{dx}{x^2} & 1 < r. 
   \end{cases}
\end{equation*}
\noindent When $r=0$, the reduction $\phi$ of $\psi$ is an Artin-Schreier cover given by $y^2-y=\frac{1}{x^2}$. As $1/x^2$ is a square, we may replace the right-hand-side of the previous equation by $1/x^2+(1/x)^2-(1/x)=1/x$. Thus, by Corollary \ref{corgood}, $\psi$ does not have good reduction as $\sum_{x \in \mathbb{B}(\psi)} h_x =2+2 > \sw_{\psi}(\overline{0})+1=\sw_{\phi}(\overline{0})+1=1+1$. 
\end{example}

\begin{remark}
\label{remarksummarizeexnonflat}
For future reference, we list down some important degeneration data of the cover $\psi$ in Example \ref{exnonflat} here. The degeneration type of $\psi$ (resp. $\psi\lvert_{\mathcal{D}[2]}$) is $\Big(0, \frac{1}{x} \Big)$ (resp. $\Big(2, \frac{dx}{x^2(x-1)^2} \Big)$).
\end{remark}

\begin{remark}
If $f(X,t)$ in (\ref{eqn1june12}) is replaced by $\frac{1}{X^2(X-t^2)}$, then the deformation is flat of type $[4] \xrightarrow{}[2,2]^{\top}$. This type of deformation will be discussed further in \S \ref{secOSS}. 
\end{remark}

In the next two sections, we will study the degeneration of $\mathbb{Z}/p$-covers with good reduction on sub-discs and sub-annuli of the formal disc.

\subsubsection{Degeneration data on the boundary}

The following is a result by Sa{\"i}di, which characterizes $\mathbb{Z}/p$-covers of a boundary of a disc.

\begin{proposition}[c.f. {\cite[Proposition 2.3.1]{MR2377173}}]
\label{propdiscbound}
Let $A:=R[[X^{-1}]]\{X\}$, and let $\psi:\spec B \rightarrow \spec A$ be a nontrivial Galois cover of degree $p$. Assume that the ramification index of the corresponding extension of discrete valuation rings equals $1$. Then $\psi$ is a torsor under a finite flat $R$-group scheme $G_R$ of rank $p$ and the following cases occur:

\begin{enumerate}
    \item \label{etaleboundary} Suppose the depth is $\delta=0$. Then $\psi$ is a torsor under the {\'e}tale group $(\mathbb{Z}/p \mathbb{Z})_R$. Moreover, for a suitable choice of the parameter $X$ of $A$, the torsor $f$ is given by an equation $Y^p-Y=X^m$ for $m \in \mathbb{Z}$, prime to $p$. In addition, $Y^{1/m}$ is a parameter of $B$.
    \item \label{additiveboundary} Suppose the depth is $\delta>0$. Then $\psi$ is a torsor under the group scheme $\mathcal{M}_{\delta p,R}$. Moreover, for a suitable choice of the parameter $X$, the torsor $\psi$ is given by an equation $Y^p-Y=X^m/\pi^{p \delta}$ for $m \in \mathbb{Z}$, prime to $p$. In addition, $(Y\pi^{\delta})^{1/m}$ is a parameter of $B$.
\end{enumerate}
\end{proposition}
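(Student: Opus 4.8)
\emph{Proof sketch.} The ring $A$ is a complete discrete valuation ring of equal characteristic $p$ with uniformizer $\pi$ and residue field $\kappa=k((x))$, so the plan is to first translate the cover into Artin--Schreier data and then recognize the integral structure. Since the ramification index of $B/A$ is $1$, the extension is weakly unramified and $\Frac(B)/\Frac(A)$ is cyclic of degree $p$; by Artin--Schreier theory I would write $\Frac(B)=\Frac(A)(Y)$ with $Y^p-Y=u$ for some $u\in\Frac(A)$. Using the Artin--Schreier operator $\wp$ together with the valuation $\nu$ on $\Frac(A)$, I would normalize $u$: whenever $\nu(u)=-n<0$ with $p\mid n$ and the leading coefficient of $u$ is a $p$-th power (which it is, as $k$ is perfect), subtracting a suitable $\wp(a)$ strictly raises $\nu(u)$. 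This terminates with either $\nu(u)\ge 0$ or $u=\pi^{-n}\omega$ with $\omega\in A^{\times}$ and $\overline{\omega}\notin\kappa^p$. By Proposition~\ref{proprefinedswanorderp} the depth is then $\delta=n/p$, so the two cases of the statement correspond exactly to $n=0$ and $n>0$.

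For the group-scheme structure I would, after enlarging $R$ so that $\pi^{\delta}\in R$, set $Z:=Y\pi^{\delta}$; as in the computation in Proposition~\ref{proprefinedswanorderp} this turns the equation into the integral relation $Z^p-\pi^{(p-1)\delta}Z=\omega\in A$. I would then exhibit $\spec B$ as the pullback of the isogeny $\Phi\colon\mathbb{G}_{a,R}\to\mathbb{G}_{a,R}$, $Z\mapsto Z^p-\pi^{(p-1)\delta}Z$, along the map $\omega\colon\spec A\to\mathbb{G}_{a,R}$; hence $\psi$ is a torsor under $G_R:=\ker\Phi$, which is finite flat of rank $p$ over $R$. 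When $\delta=0$ one has $\Phi=\wp$ and $G_R=(\mathbb{Z}/p)_R$ is étale; when $\delta>0$ the special fiber of $\Phi$ degenerates to Frobenius, so $G_R$ has étale generic fiber $\mathbb{Z}/p$ and infinitesimal special fiber $\alpha_p$, i.e. $G_R=\mathcal{M}_{\delta p,R}$. The weak-unramifiedness hypothesis is what guarantees that $\pi$ remains a uniformizer upstairs and that $A[Z]$ is already the full integral closure $B$, so that the $G_R$-action is defined integrally rather than only generically.

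It remains to put $u$ in the monomial shape asserted in each case by a change of the parameter $X$. In the case $\delta=0$ the reduction $\overline{\psi}$ is the Artin--Schreier extension $y^p-y=\overline{u}$ of $\kappa=k((x))$; after a further $\wp$-reduction over the perfect field $k$ I may assume $\overline{u}=\sum_{p\nmid j}c_j X^j$ is supported on the principal part, with top exponent $m$ prime to $p$. I would then clear every term below the top one by successive substitutions $X\mapsto X+\eta X^{j-m+1}$: since $m$ is prime to $p$, the leading binomial coefficient $m$ is a unit, so $\eta$ can be chosen to annihilate the next term, and the finitely many positive exponents below $m$ are removed after finitely many such steps (re-applying $\wp$-reduction whenever a $p$-divisible exponent reappears). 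This yields $Y^p-Y=X^m$; reading off the residue extension from $\overline{Y}^p-\overline{Y}=x^m$ shows that $\overline{Y}^{1/m}$, hence $Y^{1/m}$, is a uniformizer of $B$. In the case $\delta>0$ I would run the identical monomialization on $\omega$ (equivalently on $\overline{\omega}$ after the rescaling $Z=Y\pi^{\delta}$), obtaining $Y^p-Y=X^m/\pi^{p\delta}$ and, by the same residue computation applied to $Z$ (whose reduction satisfies $\overline{Z}^p=x^m$, a purely inseparable extension), that $(Y\pi^{\delta})^{1/m}$ is a parameter of $B$.

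The step I expect to be the genuine obstacle is the interaction between the integral group-scheme structure and the monomialization. It is routine to normalize the generic Artin--Schreier datum and, separately, to identify $\ker\Phi$ generically with $\mathbb{Z}/p$; the delicate points are (i) proving that $\ker\Phi$ is \emph{flat of rank exactly $p$ over all of $R$} with the stated special fiber $\alpha_p$ when $\delta>0$ (so that $\psi$ is honestly a $G_R$-torsor and not merely a generic $\mathbb{Z}/p$-torsor), which uses that $A[Z]$ is integrally closed under the ramification-index-$1$ hypothesis, and (ii) checking that the parameter changes used to reach the monomial form are compatible with this integral structure, i.e. that they are automorphisms of $A$ (respectively of its rescaled avatar) and do not disturb the depth $\delta$ or the group $G_R$. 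Controlling the convergence and termination of the monomialization while staying inside the valid parameters of $A$ is the part that requires the most care.
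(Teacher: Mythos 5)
The paper does not actually prove this proposition: it is imported verbatim (``c.f.'') from Sa{\"i}di's \cite[Proposition 2.3.1]{MR2377173}, with no argument supplied in the text. So there is no in-paper proof to compare against; the benchmark is Sa{\"i}di's proof (modelled on Henrio's Corollaire 1.8 in mixed characteristic), and your sketch follows essentially that route: Artin--Schreier normalization of $u$ to the reduced form $\pi^{-n}\omega$, identification of $\delta=n/p$ via Proposition \ref{proprefinedswanorderp}, the rescaling $Z=Y\pi^{\delta}$ exhibiting $\psi$ as a torsor under $\ker\bigl(Z\mapsto Z^p-\pi^{(p-1)\delta}Z\bigr)=\mathcal{M}_{\delta p,R}$, and a change of parameter bringing the equation to monomial form. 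The torsor part is right, and your observation that weak unramifiedness makes $A[Z]$ a discrete valuation ring with uniformizer $\pi$ (hence equal to $B$, so the group action is integral) is exactly the correct mechanism.

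Two places where the sketch needs to be completed. First, the paper's notion of ``reduced'' only guarantees $\overline{\omega}\notin\kappa^p$, not that the leading exponent $m$ of $\overline{\omega}$ is prime to $p$; before any monomialization you need an extra loop stripping leading monomials $a_mX^m$ with $p\mid m$ by subtracting $\wp\bigl(\pi^{-n/p}a_m^{1/p}X^{m/p}\bigr)$, which terminates precisely because $\overline{\omega}\notin\kappa^p$. Second, your substitutions $X\mapsto X+\eta X^{j-m+1}$, performed on $\overline{u}$ as written, only arrange the monomial form modulo $\pi$; the statement asserts the exact equation $Y^p-Y=X^m$ over $A$, so one must iterate the correction $\pi$-adically and verify convergence in the complete ring $A$ --- you flag this as the hard step but do not carry it out. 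Since $p\nmid m$, there is a cleaner route that avoids the approximation entirely: write $\omega=a_mX^m(1+v)$, where $v$ is topologically nilpotent in $A$ (the coefficients of exponents $>m$ lie in $\pi R$ by the choice of $m$, and the exponents $<m$ contribute powers of $X^{-1}$), and set $X':=a_m^{1/m}X(1+v)^{1/m}$; the binomial series for $(1+v)^{1/m}$ has coefficients in $\mathbb{Z}_{(p)}$ because $p\nmid m$ and converges in $A$, so $X'$ is a parameter of $A$ with $\omega=(X')^m$ on the nose. With that substitution, and the standard valuation computation identifying $Y^{1/m}$ (resp.\ $(Y\pi^{\delta})^{1/m}$) as a parameter upstairs, your argument matches Sa{\"i}di's.
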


The proposition thus shows, in the language of this paper, that an order $p$ cover of $R[[X^{-1}]]\{X\}$ is determined by its depth $\delta$ and its boundary conductor $\sw_{\psi\lvert_{\mathcal{D}}}(\overline{0})=m$ (where $\mathcal{D}$ is the disc associated with $\spec R\{X\}$). We say it has \textit{degeneration type} $(\delta,-m)$.

\begin{remark}
\label{remarkboundaryswangoodreduction}
As $\spec A$ can be thought of as the boundary of the unit $\spec R[[X]]$, we will see in the next section that the number $m$ may also be considered as the instantaneous rate of change of the depth on the centrifugal direction. In addition, it follows from Corollary \ref{coromegagood} that, for $\psi$ to be a restriction to a subdisc of a cover (of a disc) with good reduction, it is necessary that $m=-\mathfrak{C}(\mathcal{D},\psi,\overline{0})+1<0$.
\end{remark}

\begin{remark}
Sa{\"i}di's result is motivated by \cite[Corollaire 1.8]{2000math.....11098H} for the case $R$ is of mixed characteristic. However, it is not true that a $\mathbb{Z}/p^n$-cover (where $n>1$) of a boundary is determined by its depth and boundary conductor in both mixed and equal characteristic cases. See \cite[\S 5.3]{brewisthesis} for a counterexample to the mixed characteristic case. We will discuss this phenomenon further in a forthcoming paper.
\end{remark}

\subsubsection{Good degeneration data on an annulus}
\label{secgoodannulus}
Fix a closed disc $\mathcal{D}$. For a $K$-point $z$ on $\mathcal{D}$ and $r \in \mathbb{Q}_{\ge 0}$, we denote by $\delta_{\psi}(r,z)$ (resp. $\delta_{\psi}(r)$) and $\omega_{\psi}(r,z)$ (resp. $\omega_{\psi}(r)$) the depth and the differential conductors of the restriction of $\psi$ to $\mathcal{D}[r,z]$ (resp. $\mathcal{D}[r]$). When the point $z$ is fixed, we may regard $\delta_{\psi}(r,z)$ and $\omega_{\psi}(r,z)$ as  functions in terms of $r$. Suppose $\overline{y}$ is a point on the canonical the reduction of $\mathcal{D}[r,z]$, or a point a infinity $\overline{y}=\overline{\infty}$. Set $\sw_{\psi}(r,z,\overline{y}):=\sw^K_{\psi\lvert_{\mathcal{D}[r,z]}}(\overline{y})$. The following results show how understanding the differential conductor can give a lot of information about the depth. They are the exact analogs of ones from \cite[5.3.2]{MR3194815} in mixed characteristic.

\begin{proposition}
\label{propdeltalinear}
Suppose $z \in K$ is fixed. Then $\delta_{\psi}(\_, z)$ extends to a continuous, piecewise linear function 
\[ \delta_{\psi}(\_, z): \mathbb{R}_{\ge 0} \xrightarrow{} \mathbb{R}_{\ge 0}. \]
\noindent Furthermore,
\begin{enumerate}
    \item \label{propdeltalinear1} For $r \in \mathbb{Q}_{>0}$, the left (resp. right) derivative of $\delta_{\psi}(\_, z)$ at $r$ is $-\sw_{\psi}(r,z,\overline{\infty})$ (resp. $\sw_{\psi}(r,z,\overline{0})$).
    \item \label{propdeltalinear2} If $r$ is a kink of $\delta_{\psi}(\_, z)$ (meaning the left and right derivaties do not agree), then $r \in \mathbb{Q}$.
\end{enumerate}
\end{proposition}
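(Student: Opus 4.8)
The plan is to reduce everything to a ``Newton polygon'' analysis of the reduced form of $u$ as the Gauss valuation $\nu_{r,z}$ varies, reading off the slopes of $\delta_\psi(\cdot,z)$ from the refined Swan conductors of Proposition \ref{proprefinedswanorderp}. After a translation we may assume $z=0$, and after the finite base change of \S\ref{secrefinedswan} we may assume $\psi$ is weakly unramified with respect to the relevant Gauss valuations and that $u\in\mathbb{K}\setminus\wp\mathbb{K}$ is reducible. By definition $\delta_\psi(r,0)=\sw(\psi\lvert_{\mathcal{D}[r,0]})/p$, and for a representative that is reduced at the place $[r,0]$ this equals $-\nu_{r,0}(u)/p$ by Proposition \ref{proprefinedswanorderp}. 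Writing $u=\sum_i c_iX^i$ with $c_i\in K$, we have $\nu_{r,0}(u)=\min_i(\nu(c_i)+ir)$, a concave piecewise-linear function of $r$; the genuine subtlety is that the \emph{reduced} representative itself changes with $r$, since one must add a suitable $\wp(a)$ whenever the leading part $[u]_{r,0}$ becomes a $p$-th power, exactly as in Examples \ref{examplecomputeswan} and \ref{exnonflat}.

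First I would establish affineness of $\delta_\psi(\cdot,0)$ on each open annulus lying strictly between two consecutive \emph{critical radii}, a critical radius being one at which either a branch point of $\psi$ lies on the circle $\nu(X)=r$ or the combinatorial type of the reduced form changes. Such an annulus contains no branch point, so by Sa{\"i}di's classification of covers of a boundary (Proposition \ref{propdiscbound}), applied at its two ends, the reduced $u$ is, up to $\wp$, a single monomial; hence $\omega(\psi\lvert_{\mathcal{D}[r,0]})$ is a monomial differential $c\,x^{a}\,dx$ whose order at $\overline{0}$ and $\overline{\infty}$ is constant in $r$, and $\delta_\psi(\cdot,0)$ is affine there. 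This treats the radical locus $\delta_\psi>0$; on the locus $\delta_\psi=0$ the cover has {\'e}tale reduction and one argues identically, using Remark \ref{remarkboundaryswan}(\ref{remarkboundaryswan1}) in place of Proposition \ref{proprefinedswanorderp}.

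Next I would identify the one-sided slopes with the boundary Swan conductors. Increasing $r$ shrinks $\mathcal{D}[r,0]$ toward its centre $z=0$, i.e.\ moves in the direction $\overline{0}$, while decreasing $r$ moves outward toward $\overline{\infty}$; thus the right (resp.\ left) derivative at $r$ is the slope of the adjacent inner (resp.\ outer) annulus. On the inner annulus the dominant monomial of $-\nu_{r,0}(u)$ is the one of largest pole order at $\overline{0}$, whose order is recorded by $\ord_{\overline 0}(\omega(\psi\lvert_{\mathcal{D}[r,0]}))$; comparing this with Remark \ref{remarkboundaryswan} (and Corollary \ref{coromegagood}) identifies the right derivative with $\sw_\psi(r,0,\overline{0})$, and symmetrically the order at $\overline{\infty}$ yields $-\sw_\psi(r,0,\overline{\infty})$ as the left derivative. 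Continuity at each critical radius then follows because both adjoining affine pieces take the common value $\delta_\psi(r,0)=\sw(\psi\lvert_{\mathcal{D}[r,0]})/p$ there; gluing the finitely many affine pieces produces the desired continuous, piecewise-linear extension $\mathbb{R}_{\ge 0}\to\mathbb{R}_{\ge 0}$, proving the main assertion and part (\ref{propdeltalinear1}).

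Finally, for part (\ref{propdeltalinear2}) I would observe that every kink occurs at a critical radius, and that each critical radius is a solution of an equation $\nu(c_i)+ir=\nu(c_j)+jr$ between two terms of $u$ (or of $u+\wp(a)$), or is the valuation of the coordinate of a branch point; since all these valuations lie in $\mathbb{Q}$ after our base change, every kink is rational. The main obstacle is the core claim of the second and third paragraphs: one must control how the reduced representative jumps at a critical radius---precisely the phenomenon that the monomial leading part of $u$ can become a $p$-th power and force an extra $\wp(a)$---and show that the order of the monomial differential $\omega(\psi\lvert_{\mathcal{D}[r,0]})$ changes in exactly the way dictated by the boundary Swan conductors, with the correct sign and normalization in both directions. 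This is the equal-characteristic counterpart of \cite[\S5.3.2]{MR3194815}, and I would follow that argument, substituting Proposition \ref{proprefinedswanorderp} and Corollary \ref{coromegagood} for its mixed-characteristic inputs.
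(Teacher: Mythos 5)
The paper gives no argument for this proposition at all --- its ``proof'' is the single citation \cite[Proposition 2.9]{2005math.....11434W} --- so any direct argument is a different route by default. Your Newton--polygon picture is the right one, and the slope identification in your third paragraph is essentially correct, up to a normalization you should pin down: if $\delta_{\psi}(r,z)=\delta(\psi\lvert_{\mathcal{D}[r,z]})$ literally, then the slope you read off from $\nu_{r,z}(u)=\min_i(\nu(c_i)+ir)$ is $\sw_{\psi}(r,z,\overline{0})/p$, not $\sw_{\psi}(r,z,\overline{0})$; the statement as written is only consistent with the parametrization by $\mathcal{D}[pr,z]$ (which is what Example \ref{exnonflat} and the convention $\epsilon_e=(\text{thickness})/p$ actually use). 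Also, Corollary \ref{coromegagood} is an inequality for covers with good reduction and plays no role in identifying derivatives; the input you actually need there is just Remark \ref{remarkboundaryswan}(\ref{remarkboundaryswan2}) plus the monomial computation.

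The genuine gap is in the second paragraph, and it is precisely the content of the result the paper cites. You define critical radii as the places where a branch point sits on the circle or ``the combinatorial type of the reduced form changes,'' and then declare $\delta_{\psi}(\cdot,z)$ affine in between. This presupposes two things that are not established: (i) on each open interval between critical radii a \emph{single} corrected representative $u+\wp(a)$ remains reduced at every place $\nu_{r,z}$ in that interval, and (ii) the critical radii form a locally finite set. Neither is automatic: whenever $[u]_{r,z}$ becomes a $p$-th power one must replace $u$ by $u+\wp(a)$, and in equal characteristic the depth is unbounded, so nothing a priori forces this process to stabilize on intervals or prevents the critical radii from accumulating. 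Your appeal to Proposition \ref{propdiscbound} does not close this: that is a statement at one boundary circle after a change of parameter, and says nothing uniform over a range of radii. You flag exactly this as ``the main obstacle'' and defer to \cite[\S 5.3.2]{MR3194815}, which is honest, but it means the one nontrivial step of the proof is still outsourced to the literature --- in effect the same move the paper makes by citing \cite{2005math.....11434W}, just with more of the surrounding bookkeeping written out.
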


\begin{proof}
See \cite[Proposition 2.9]{2005math.....11434W}.
\end{proof}

\begin{definition}
\label{defncondopendisc}
Thanks to the above fact, it makes sense to define the refined Swan conductors of the restriction of $\psi$ to an open disc $D[r,z]$ as $\delta(\psi\lvert_{D[r,z]}):=\delta(\psi\lvert_{\mathcal{D}[r,z]})$ and $\omega(\psi\lvert_{D[r,z]}):=\omega(\psi\lvert_{\mathcal{D}[r,z]})$. We set the boundary conductor of $\psi\lvert_{D[r,z]}$ to be that of $\psi\lvert_{\mathcal{D}[r,z]}$.
\end{definition}

\begin{corollary}
\label{corleftrightderivative}
If $r \ge 0$ and $\delta_{\psi}(r,z)>0$, then the left and right derivatives of $\delta_{\psi}$ at $r$ are given by $\ord_{\overline{\infty}}(\omega_{\psi}(r))+1$ and $-\ord_{\overline{0}}(\omega_{\psi}(r))-1$, respectively. In particular, the integer $-\ord_{\overline{x}}(\omega_{\psi}(r,z))-1$ is the instantaneous rate of change of $\delta_{\psi}(r,z)$ on the direction with respect to $\overline{x}$ (defined in Example \ref{examplesemistablemodel}).
\end{corollary}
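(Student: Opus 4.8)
The statement is a direct combination of Proposition \ref{propdeltalinear} and Remark \ref{remarkboundaryswan}, once the sign and orientation conventions are unwound; I expect no new analytic input to be needed. The plan is as follows.

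First I would fix $r$ with $\delta_\psi(r,z)>0$ and treat the generic case $r\in\mathbb{Q}_{>0}$. By Proposition \ref{propdeltalinear}(\ref{propdeltalinear1}), the left (resp. right) derivative of $\delta_\psi(\_,z)$ at $r$ equals $-\sw_\psi(r,z,\overline{\infty})$ (resp. $\sw_\psi(r,z,\overline{0})$), where $\sw_\psi(r,z,\overline{x})$ is the boundary Swan conductor of the restriction $\psi\lvert_{\mathcal{D}[r,z]}$ with respect to $\overline{x}$. Since the depth $\delta(\psi\lvert_{\mathcal{D}[r,z]})=\delta_\psi(r,z)$ is strictly positive by hypothesis, I would apply Remark \ref{remarkboundaryswan}(\ref{remarkboundaryswan2}) to this restriction with $\overline{x}\in\{\overline{0},\overline{\infty}\}$, which gives $\sw_\psi(r,z,\overline{x})=-\ord_{\overline{x}}(\omega_\psi(r,z))-1$. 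Substituting into the two derivative expressions yields the left derivative $\ord_{\overline{\infty}}(\omega_\psi(r,z))+1$ and the right derivative $-\ord_{\overline{0}}(\omega_\psi(r,z))-1$, which are exactly the claimed formulas (with $\omega_\psi(r)=\omega_\psi(r,0)$ in the notation of \S\ref{secgoodannulus}).

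To cover all $r\ge 0$ I would dispose of the boundary and irrational cases. By Proposition \ref{propdeltalinear}(\ref{propdeltalinear2}) every kink of the piecewise-linear function $\delta_\psi(\_,z)$ occurs at a rational argument, so at irrational $r$ the two one-sided derivatives coincide and equal the common slope of the linear segment containing $r$. Along the interior of such a segment the identity $-\ord_{\overline{x}}(\omega_\psi(\_,z))-1=\text{slope}$, valid at the dense set of rational points by the previous paragraph, forces $\ord_{\overline{x}}(\omega_\psi(\_,z))$ to be constant there, so the formula propagates to the irrational $r$ of that segment. At $r=0$ the domain $\mathcal{D}[0]$ is the full closed unit disc and only the outward direction (increasing $r$, i.e. the direction of $\overline{0}$) stays inside, so there only the right-derivative statement is asserted, and it follows exactly as above.

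Finally, for the ``in particular'' clause I would translate the one-sided derivatives into directional rates using the orientation fixed in Example \ref{examplesemistablemodel}. The direction associated with $\overline{\infty}$ points toward the root, i.e. toward decreasing $r$, so the instantaneous rate of change of $\delta_\psi(r,z)$ in that direction is the negative of the left derivative, namely $-\bigl(\ord_{\overline{\infty}}(\omega_\psi(r,z))+1\bigr)=-\ord_{\overline{\infty}}(\omega_\psi(r,z))-1$; the direction associated with $\overline{0}$ points away from the root, i.e. toward increasing $r$, so the corresponding rate is simply the right derivative $-\ord_{\overline{0}}(\omega_\psi(r,z))-1$. In both cases the rate equals $-\ord_{\overline{x}}(\omega_\psi(r,z))-1$, as claimed. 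The only delicate point I anticipate is keeping these decreasing/increasing-$r$ conventions consistent with the signs in Proposition \ref{propdeltalinear}; all the substantive content is carried by the two cited results, so beyond the bookkeeping there is nothing to estimate.
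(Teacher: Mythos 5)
Your proposal is correct and follows exactly the paper's own (one-line) proof: the paper also derives the corollary by substituting Remark \ref{remarkboundaryswan}(\ref{remarkboundaryswan2}) into the derivative formulas of Proposition \ref{propdeltalinear}(\ref{propdeltalinear1}). The extra bookkeeping you supply for irrational $r$, the endpoint $r=0$, and the sign conventions is consistent with the paper and merely makes explicit what the paper leaves implicit.
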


\begin{proof}
Immediate from Proposition \ref{propdeltalinear} (\ref{propdeltalinear1}) and Remark \ref{remarkboundaryswan} (\ref{remarkboundaryswan2}).
\end{proof}

\begin{remark}
It follows from the above results and Corollary \ref{corswgood} that, if a cover of a disc has good reduction, then its depth varies from a point inside the disc to its boundary like a piecewise linear, weakly concave down function. Moreover, the rate of change of the depth at each place is determined by the sum of conductors of the branch points further away from the boundary (i.e. ones inside the corresponding closed disc). It is not the case in Example \ref{exnonflat} because $\delta(\psi\lvert_{\mathcal{D}[r]})$, regarded as a function with respect to $r$, fails to concave down at $r=1/2$ as showed in Figure \ref{figgraphexnoflat}.
\end{remark}

We thus can describe the degeneration of a cover of an annulus as below.

\begin{proposition}[{c.f. \cite[Proposition 3.3.9]{MR2377173}}]
\label{propgoodannulus}
Suppose $\psi$ is a $\mathbb{Z}/p$-cover of a unit disc $D$ with good reduction. Let $\mathcal{X}$ be an annulus of thickness $p\epsilon$ lying inside $D$ that contains no branch points of $\psi$. Let $S_1$ and $S_2$ be the inside (one further from the boundary of $D$) and the outside boundary of $\mathcal{X}$, respectively. Then the reduction on $S_1$ is of type $(d,q)$ (where $d \ge 0$) if and only if the degeneration type on $S_2$ is $(d-q\epsilon, -q)$.
\end{proposition}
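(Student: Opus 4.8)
The plan is to realise $\mathcal{X}$ explicitly as $\spec R[[X,U]]/(XU-a)$ with $\nu(a)=p\epsilon$, to reduce $\psi$ to Sa{\"i}di's standard form on each of the two boundary circles, and then to pass from one boundary to the other by the coordinate change $U=a/X$ that glues the two ends of the annulus. Here $S_2$ is the outer boundary $\spec R[[X]]\{X^{-1}\}$, with parameter $X$, and $S_1$ the inner boundary $\spec R[[U]]\{U^{-1}\}$, with parameter $U$. After enlarging $R$ the restriction of $\psi$ to each boundary is weakly unramified, so Proposition \ref{propdiscbound} gives standard forms $Y^p-Y=U^{m}/\pi^{p\delta_1}$ on $S_1$ and $Y^p-Y=X^{m'}/\pi^{p\delta_2}$ on $S_2$, with $m,m'$ prime to $p$; in the convention fixed after that proposition the degeneration type records the depth together with this monomial exponent (equivalently $-\sw_{\psi}(\overline{0})$ for the parameter in use), so the types are $(\delta_1,m)$ and $(\delta_2,m')$.

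The decisive point is that, because $\mathcal{X}$ contains no branch point and $\psi$ has good reduction, a single monomial governs both boundaries. For every intermediate circle $\mathcal{D}[r]\subset\mathcal{X}$ the residue disc of a point $\overline{x}$ on $\overline{\mathcal{D}}[r]$ meets $\mathbb{B}(\psi)$ only for $\overline{x}=\overline{0}$, where it gathers exactly the branch points lying below $S_1$ (these separate only deeper in, beyond $S_1$); hence $\mathfrak{C}(\mathcal{D}[r],\psi,\overline{0})$ is a constant $\mathfrak{h}$ along $\mathcal{X}$. When $\delta(\psi\lvert_{\mathcal{D}[r]})>0$, the equality case of Corollary \ref{coromegagood} together with Remark \ref{remarkgooddiff} forces $\omega(\psi\lvert_{\mathcal{D}[r]})=c\,x^{-\mathfrak{h}}\,dx$ up to a scalar, so its orders $-\mathfrak{h}$ at $\overline{0}$ and $\mathfrak{h}-2$ at $\overline{\infty}$ are constant; by Corollary \ref{corleftrightderivative} the two one-sided slopes of $\delta_{\psi}$ then agree (both equal $\mathfrak{h}-1$) throughout $\mathcal{X}$, so $\delta_{\psi}$ is linear there and the monomial $U^{m}$ propagates across $\mathcal{X}$ without acquiring interior ramification. (When $\delta=0$ one argues identically from the \'etale reduction of Proposition \ref{propdiscbound}(\ref{etaleboundary}).) This linearity is exactly what collapses into a kink in the non-good-reduction Example \ref{exnonflat}.

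It then remains to transport the standard form across $\mathcal{X}$. Substituting $U=a/X$ and using $\nu(a^{m})=p\epsilon m$ gives
\[
\frac{U^{m}}{\pi^{p\delta_1}}=\frac{a^{m}\,X^{-m}}{\pi^{p\delta_1}}=(\mathrm{unit})\cdot\frac{X^{-m}}{\pi^{p(\delta_1-\epsilon m)}},
\]
so on $S_2$ the exponent is $m'=-m$ and the depth is $\delta_2=\delta_1-\epsilon m$. Writing $d=\delta_1$ and $q=m$, the type on $S_1$ is $(d,q)$ and the type on $S_2$ is $(d-q\epsilon,-q)$, as asserted; the reversal $q\mapsto-q$ is precisely the effect of reading the two boundaries in the reciprocal parameters $X$ and $U$. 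Running the substitution backwards gives the converse implication. The step I expect to be the genuine obstacle is the middle one---showing that good reduction really does pin $\omega(\psi\lvert_{\mathcal{D}[r]})$ to a single pole at $\overline{0}$ for every intermediate $r$, i.e.\ that no hidden ramification or kink can arise inside a branch-point-free annulus. Granting that, the depth shift $q\epsilon$ and the sign flip drop out of the elementary substitution above.
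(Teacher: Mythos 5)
Your argument is correct and follows essentially the same route as the paper's proof: both rest on the constancy of $\mathfrak{C}(\mathcal{D}[r],\psi,\overline{0})$ across the branch-point-free annulus, the equality case of Corollary \ref{coromegagood} combined with Remark \ref{remarkgooddiff} to pin down $\ord_{\overline{0}}\omega(r)$ and $\ord_{\overline{\infty}}\omega(r)$, and Proposition \ref{propdeltalinear}/Corollary \ref{corleftrightderivative} to conclude that the depth is linear of slope $\mathfrak{C}-1$ on $[0,p\epsilon]$. Your closing coordinate change $U=a/X$ just makes explicit what the paper leaves implicit (and reappears verbatim in its \S\ref{edge}), so the "genuine obstacle" you flag is exactly the step the paper's proof supplies, and you supply it correctly.
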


\begin{proof}
Let $S, U \in \Frac R[[X]]$ be such that $\mathcal{X} \cong \spec R[[S,U]]/(SU-\pi^{p\epsilon})$. One may think of $\mathcal{X}$ as the complement of $\spec R\{U^{-1}\}$ inside $\spec R[[S]]$. Hence $S_2 \cong \spec R\{S\}[[S^{-1}]]$ and $S_1 \cong \spec R\{U^{-1}\}[[U]]$. As in \S \ref{secsemistablemodel}, we may associate with $\spec R[[S]]$ a unit disc $D$ over $K$ parameterized by $S$, and with $\spec R[[U^{-1}]]$ its sub-disc $D[p\epsilon]$. Set $\mathfrak{C}:=\mathfrak{C}(\mathcal{D}, \psi, \overline{0} )$. For $r \in \mathbb{Q} \cap [0,p\epsilon]$, define $\delta(r):=\delta({\psi\lvert_{\mathcal{D}[r]}})$, $\omega(r):=\omega({\psi\lvert_{\mathcal{D}[r]}})$. Apply Corollary \ref{coromegagood} and Corollary \ref{corleftrightderivative}, we obtain
\begin{equation}
    \label{eqndiffannulus}
    \ord_{\overline{0}}\omega(r)=-\mathfrak{C} \text{ for } r \in \mathbb{Q}\cap[0,p\epsilon).
\end{equation}
Thus, it must also be true that $\ord_{\overline{\infty}}\omega(r)=\mathfrak{C}-2 \text{ for } r \in \mathbb{Q}\cap(0,p\epsilon]$. Therefore, the boundary Swan conductors of $S_2$ and $S_1$ are $q:=-\mathfrak{C}+1$  and $-q$, respectively. Finally, it follows from (\ref{eqndiffannulus}) and Proposition \ref{propdeltalinear} that $\delta(\_)$ is linear with slope $\mathfrak{C}-1$ on $[0,p\epsilon]$. That requires the depth at the two boundaries to behave as in the statement. 
\end{proof}

\begin{remark}
\cite[Proposition 3.3.9]{MR2377173} is actually much stronger than Proposition \ref{propgoodannulus}. It claims that the two boundaries' degeneration types completely determine such a cover of an annulus. However, the author does not give the proof but only cites \cite[Proposition 4.2.5]{MR2032453}, where the corresponding result in mixed characteristic is proved. We also do not fully understand that proof at the moment as the approach is quite different from the one above. Ours is still good enough for the purpose of this paper, though.
\end{remark}

\section{Hurwitz trees of Artin-Schreier covers}
\label{sectionHurwitz}

Suppose $R$ is a complete discrete valuation ring whose residue field is of characteristic $p>0$. When $R$ is of mixed characteristic (e.g., $R=W(k)$), Henrio, Bouw, Wewers, and Brewis define a combinatorial object called \textit{Hurwitz tree} from a $G$-cover of a formal disc over $R$ (see \cite{2000math.....11098H}, \cite{MR2254623}, \cite{MR2534115}). This construction gives an obstruction for the lifting of Galois covers in characteristic $p$. In this section, we introduce the notion of Hurwitz tree in equal characteristic, which basically formalizes what Maugeais and Sa{\"i}di did in \cite{MR2015076} and \cite{MR2377173}, using the language from \cite{2000math.....11098H} and \cite{MR2254623}. Then, we will describe how a $\mathbb{Z}/p$-deformation gives rise to such a tree. Finally, we will provide an obstruction for the equal characteristic deformation of Artin-Schreier covers and compute the Hurwitz trees of the previous examples. 

Throughout the rest of the paper, we assume $R=k[[t]]$, and $K:=k((t))$ is the fraction field of $R$. 

\subsection{Hurwitz tree}
\label{sectconstruct}
The below definition and the two following paragraphs are almost identical to ones in \cite[\S 3.1]{MR2254623}. We recite here for the convenience of the reader.
\begin{definition}
\label{defdecorated}
A \emph{decorated tree} is given by the following data:

\begin{itemize}
    \item a semistable curve $C$ over $k$ of genus $0$,
    \item a family $(x_b)_{b\in B}$ of pairwise distinct smooth $k$-rational points of $C$, indexed by a finite nonempty set $B$,
    \item a distinguished smooth $k$-rational point $x_0 \in C$, distinct from any of the point $x_b$.
\end{itemize}
We require that $C$ is stably marked by the points $((x_b)_{b \in B},x_0)$.
\end{definition}

The \emph{combinatorial tree} underlying a decorated tree $C$ is the graph $T=(V,E)$, defined as follows. The vertex set $V$ of $T$ is the set of irreducible components  of $C$, together with a distinguished element $v_0$. The edge set $E$ is the set of singular points of $C$, together with a distinguished elements $e_0$. We write $C_v$ for the component corresponding to a vertex $v \neq v_0$ and $x_e$ for the singular point corresponding to an edge $e \neq e_0$. An edge $e$ corresponding to a singular point $x_e$ is adjacent to the vertices corresponding to the two components which intersect at $x_e$. The edge $e_0$ is adjacent to the root $v_0$ and the vertex $v$ corresponding to the (unique) component $C_v$ containing the distinguished point $x_0$. For each edge $e \in E$, the \emph{source} (resp. the \emph{target}) of $e$ is the unique vertex  $s(e) \in V$ (resp. $t(e) \in V$)  adjacent to $e$ which lies in the direction of the root (resp. in the direction away from the root).

Note that, since $(C, (x_b), x_0)$ is stably marked of genus $0$, the components $C_v$ have genus zero, too, and the graph $T$ is a tree. Moreover, we have $\lvert B \rvert \ge 1$. For a vertex $v \in V$, we write $U_v \subsetneq C_v$ for the complement in $C_v$ of the set of singular and marked points.

\begin{definition}
\label{defhurwitztree}
Let $\overrightarrow{E}=\{h_1,  \ldots, h_r\}$ an element of $\Omega_{\sum_{i=1}^r h_i}$ (defined in \S \ref{secmoduli}). A \textit{$\mathbb{Z}/p$-Hurwitz tree} (or just \textit{Hurwitz tree} in the context this paper) of \textit{type} $\overrightarrow{E}$ is given by the following data:
\begin{itemize}
    \item A decorated tree $C=(C,(x_b),x_0)$ with underlying combinatorial tree $T=(V,E)$.
    \item For every $v \in V$, a rational number $\delta_v \ge 0$, called the \emph{depth} of $v$.
    \item For each $v \in V \setminus \{v_0\}$, an exact differential form $\omega_v$ called the \textit{differential conductor} at $v$.
    \item For every $e \in E$, a positive rational number $\epsilon_e$, called the \emph{thickness} of $e.$
    \item For every $b_i \in B=\{b_1, \ldots, b_r\}$, the positive number $h_i$ (from $\overrightarrow{E}$), called the \textit{conductor} at $b$.
    \item For $v_0$, a fraction $\frac{1}{x^d}$, where $d \not\equiv 0 \pmod{p}$, called the \textit{degeneration} at $v_0$.
\end{itemize}

\noindent These objects are required to satisfy the following conditions.

\begin{enumerate}[label=(\text{H}{{\arabic*}})]
    \item \label{c1Hurwitz} Let $v \in V$. We have $\delta_v \neq 0$ if $v \neq v_0$.
    \item \label{c2Hurwitz} For each $v \in V\setminus \{v_0\}$, the differential form $\omega_v$ does not have zeros nor poles on $U_v \subsetneq C_v$.
    \item \label{c3Hurwitz} For every edge $e \in E - \{e_0\}$, we have the equality 
    \[ -\ord_{x_e} \omega_{t(e)}-1 = \ord_{x_e}\omega_{s(e)}+1 \not\equiv 0 \pmod{p} . \]
    \item \label{c4Hurwitz} For $v_0$, we have $d = \ord_{x_{e_0}}\omega_{t(e_0)}+1$.
    \item \label{c5Hurwitz} For every edge $e \in E$, we have
    \[\delta_{s(e)}+\epsilon_e  d_{e} = \delta_{t(e)}, \]
    \noindent where
    \[ d_{e}:= -\ord_{x_e} \omega_{t(e)}-1 \underset{s(e) \neq v_0}{\stackrel{\ref{c2Hurwitz}}{=}}\ord_{x_e} \omega_{s(e)}+1 .\]
    \item \label{c6Hurwitz} For $b \in B$, let $C_v$ be the component containing the point $x_b$. Then the differential $\omega_v$ has a pole in $x_b$ of order $h_b$. 
\end{enumerate}

\noindent For each $v \in V \setminus \{v_0\}$, we call $(\delta_v, \omega_v)$ the \textit{degeneration type} of $v$. For each $e \in E$, we call $(\delta_{s(e)}, -d_{s(e)})$ (resp. $(\delta_{t(e)}, d_{t(e)})$) the \textit{initial degeneration type} (resp. the \textit{final degeneration type}) of $e$. The integer $h:=d+1$ is called the \emph{conductor} of the Hurwitz tree. The rational $\delta:=\delta_{v_0}$ is the \emph{depth}. We define the \textit{height} of the tree to be the maximal number of edges on a direction from its root to its leaves.
\end{definition}

One can easily obtain the following result. See \cite[\S 2]{2000math.....11098H} and \cite{MR1645000} for the proofs of its mixed characteristic analogs.

\begin{lemma}
\label{lemmarootconductor}
Let $(C,\omega_v, \delta_v, \epsilon_e,  h_b, d)$ be a Hurwitz tree. Fix an edge $e \in E$ and let $C_e \subseteq C$ be the union of all components $C_v$ corresponding to vertices $v$ which are separated from the root $v_0$ by the edge $e$. Then:
    \[ d_e= \sum_{\substack{b \in B \\ x_b \in C_e}} h_b -1 >0.\]
\noindent In particular, $h=\sum_{b \in B} h_b$.
\end{lemma}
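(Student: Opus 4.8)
The statement to prove is Lemma~\ref{lemmarootconductor}: for a fixed edge $e$ with $C_e$ the union of components separated from $v_0$ by $e$, we have $d_e = \sum_{b\in B,\, x_b \in C_e} h_b - 1 > 0$, and in particular $h = \sum_{b\in B} h_b$. My approach is to prove this by induction on the height of the subtree hanging off $e$ (equivalently, by descending induction from the leaves of the tree toward the root), using the two local constraints \ref{c3Hurwitz} and \ref{c6Hurwitz} together with the fact that each component $C_v$ has genus $0$, so that the residue theorem / degree formula for differential forms on $\mathbb{P}^1$ can be applied on each $U_v$.

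\textbf{Base case.} First I would treat the case where $t(e) = v$ is a \emph{terminal} vertex, i.e.\ the component $C_v$ carries only marked points $x_b$ (no further edges leading away from the root) and the single singular point $x_e$. On $C_v \cong \mathbb{P}^1_k$, the differential form $\omega_v$ satisfies condition \ref{c2Hurwitz}: it has no zeros or poles on $U_v$, the complement of the singular and marked points. Hence its only poles are at the marked points $x_b$ (of order $h_b$ by \ref{c6Hurwitz}) and at $x_e$. Since the degree of the divisor of a meromorphic differential on $\mathbb{P}^1$ is $-2$, summing the orders of zeros and poles gives
\[ \ord_{x_e}\omega_v + \sum_{\substack{b\in B\\ x_b \in C_v}} (-h_b) = -2, \]
so $\ord_{x_e}\omega_v = \sum_{b} h_b - 2$, and therefore $d_e = -\ord_{x_e}\omega_{t(e)} - 1 = \big(\sum_{b} h_b\big) - 1$. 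Here I must be slightly careful: the sum is over the marked points on $C_v$, which are exactly the $x_b$ with $x_b \in C_e$ since $C_e = C_v$ in the terminal case; and I should confirm the orientation convention so that $x_e$ is the singular point in the direction of the root.

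\textbf{Inductive step.} For a non-terminal vertex $v = t(e)$, the component $C_v$ meets $x_e$ (toward the root) and also several singular points $x_{e'}$ corresponding to edges $e'$ with $s(e') = v$ leading away from the root, plus possibly some marked points $x_b \in C_v$. Applying the degree-$(-2)$ formula to $\omega_v$ on $C_v \cong \mathbb{P}^1$ and again using \ref{c2Hurwitz} so that all poles are accounted for at these distinguished points, I get
\[ \ord_{x_e}\omega_v + \sum_{e':\, s(e')=v} \ord_{x_{e'}}\omega_v + \sum_{\substack{b\in B\\ x_b\in C_v}} (-h_b) = -2. \]
For each child edge $e'$, condition \ref{c3Hurwitz} reads $\ord_{x_{e'}}\omega_{s(e')} + 1 = d_{e'}$, i.e.\ $\ord_{x_{e'}}\omega_v = d_{e'} - 1$, and by the induction hypothesis $d_{e'} = \sum_{b:\, x_b\in C_{e'}} h_b - 1$. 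Substituting and solving for $\ord_{x_e}\omega_v$ yields $d_e = -\ord_{x_e}\omega_v - 1 = \sum_{b:\, x_b\in C_e} h_b - 1$, since $C_e$ is the disjoint union of $C_v$'s marked points together with all the $C_{e'}$. Positivity $d_e > 0$ then follows because at least one marked point lies in $C_e$ (so the sum of $h_b \geq 2$ is at least $2$). Finally, applying this at the root edge $e_0$, together with \ref{c4Hurwitz} which says $d = \ord_{x_{e_0}}\omega_{t(e_0)} + 1 = d_{e_0}$, gives $d = \sum_{b\in B} h_b - 1$, i.e.\ $h = d+1 = \sum_{b\in B} h_b$.

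\textbf{Main obstacle.} The step I expect to require the most care is the correct bookkeeping of the residue/degree formula on each $C_v$ when the semistable curve is \emph{not} irreducible at that vertex but the component itself is still a genus-$0$ $\mathbb{P}^1$ — I must verify that condition \ref{c2Hurwitz} genuinely forces $\omega_v$ to be regular and nonvanishing away from the finitely many distinguished points, so that the degree formula has no hidden contributions, and I must pin down the sign and orientation conventions (which singular point is $x_e$ versus $x_{e'}$) so that the telescoping of \ref{c3Hurwitz} across edges is consistent. Once the orientation is fixed, the algebra is a routine telescoping induction, and the genus-$0$ degree formula is the only geometric input needed.
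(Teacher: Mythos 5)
Your overall strategy is the right one and is essentially the intended proof: the paper does not prove this lemma itself but refers to the mixed-characteristic analogues (Henrio, Green--Matignon), which argue exactly as you propose — apply the degree formula $\deg(\operatorname{div}\omega_v)=-2$ on each genus-$0$ component, use \ref{c2Hurwitz} and \ref{c6Hurwitz} to locate the divisor of $\omega_v$, and telescope across the edges by induction from the leaves to the root. The positivity claim and the deduction of $h=\sum_b h_b$ via \ref{c4Hurwitz} at the trunk are also handled correctly.

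However, the sign bookkeeping you flagged as your ``main obstacle'' is not just a formality: as written, your chain of equalities does not close. In the base case you correctly get $\ord_{x_e}\omega_v=\sum_b h_b-2$ (a \emph{zero} at $x_e$), but then $-\ord_{x_e}\omega_{t(e)}-1$ equals $1-\sum_b h_b$, not $\sum_b h_b-1$. In the inductive step the problem compounds: substituting $\ord_{x_{e'}}\omega_v=d_{e'}-1$ for each of $m$ child edges together with $d_e=-\ord_{x_e}\omega_v-1$ yields $d_e=\sum_{x_b\in C_e}h_b-1-2(m-1)-2\sum_{x_b\in C_v}h_b$, which agrees with the target only when $v$ has a single child and no marked points — a configuration excluded by stable markedness. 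The source of the trouble is that the paper's displayed definition $d_e:=-\ord_{x_e}\omega_{t(e)}-1$ in \ref{c5Hurwitz} is itself inconsistent with \ref{c4Hurwitz}, with \ref{c5Hurwitz}'s requirement $\delta_{t(e)}=\delta_{s(e)}+\epsilon_e d_e>\delta_{s(e)}$, and with the worked example (where $\omega_{v_1}=dx/(x^7(x-1)^5)$, $\omega_{v_2}=dx/(x^4(x-1)^3)$, and $\delta$ rises from $11$ to $17$ across an edge of thickness $1$, forcing $d_{e_1}=+6=\ord_{\infty}\omega_{v_2}+1=-\ord_{0}\omega_{v_1}-1$). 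The convention you must adopt, therefore, is that $\omega_{t(e)}$ has a \emph{zero} of order $d_e-1$ at $x_e$ while $\omega_{s(e)}$ has a \emph{pole} of order $d_e+1$ there, i.e.\ $d_e=\ord_{x_e}\omega_{t(e)}+1=-\ord_{x_e}\omega_{s(e)}-1$. With that fix the inductive step reads $(d_e-1)+\sum_{j}(-d_{e'_j}-1)-\sum_{x_b\in C_v}h_b=-2$, which telescopes to $d_e=\sum_j\bigl(\sum_{x_b\in C_{e'_j}}h_b\bigr)+\sum_{x_b\in C_v}h_b-1=\sum_{x_b\in C_e}h_b-1$ exactly as you intended. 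So the idea is sound, but the proof is only complete once you commit to this orientation; the one you wrote down makes the telescoping fail for any vertex with two or more children.
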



\subsection{The Hurwitz tree associated to a \texorpdfstring{$\mathbb{Z}/p$}{Zp}-cover of a disc.}
\label{seccovertotree}

Let $D_K:=\{X \mid   \lvert X \rvert_K<1 \}$ be the rigid open unit disc over $K$. Suppose we are given a $\mathbb{Z}/p$-cover $\psi$ of $D_K$, which is  algebraically given by
\[ \psi: R[[Z]] \xrightarrow{} R[[X]]\]
\noindent with good reduction. Hence $\delta(\psi)=0$. Obviously, $\psi$ induces a $\mathbb{Z}/p$-cover of the boundary $\spec \allowbreak R[[X^{-1}]]\{X\}$ of the disc $D_K$. Let $d$ be the boundary conductor of this cover. Following \cite{MR2534115}, we will now associate to $\psi$ a Hurwitz tree.

Let $x_{b,k}\in D_K$ be the branch points of $\psi$, indexed by the finite set $B$. We assume that the points $x_{b,K}$ are all $K$-rational and the conductor at $x_{b,K}$ is $h_b$. By the different criterion (Proposition \ref{propdifferentcriterion}) and Proposition \ref{propdiscbound}, we obtain $\sum_{b \in B} h_{b} = d+1=: h$. We assume that $\psi$ has at least two branch points $(\lvert B \rvert \ge 2)$, i.e. the deformation is non-trivial. Applying \S \ref{secsemistablemodel}, one obtains  $D_R$, the semistable model of the marked disc $(D_K, (x_{b,K})_{b \in B})$ with special fiber $(\overline{D},(x_b),x_0)$. Note that it is a decorated tree, in the sense of Definition \ref{defdecorated}.


Let $(V,E)$ be the combinatorial tree underlying $T:=(\overline{D},(x_b),x_0)$. For $v \in V$, let $U_v \subsetneq D_v$ be the complement of the singular and marked points and let $U_{v,K} \subsetneq D_K$ be the affinoid subdomain with reduction $U_v$. Let $V_v$ (resp. $V_{v,K}$) denote the inverse image of $U_v$ (resp. $U_{v,K}$) under $\psi$. By construction, $V_{v,K} \xrightarrow{} U_{v,K}$ is a torsor under the constant $K$-group scheme $G$. We set $\delta_v \in \mathbb{Q}_{\ge 0}$ to be the depth conductor of the restriction of the cover to $V_{v,K}$. When $\psi\lvert_{V_{v,K}}$ is radical, we set $\omega_v \in \Omega^1_{k(x)}(U_v)$ to be the differential conductor of the same restriction. When $\psi\lvert_{V_{v,K}}$ is {\'e}tale, which only happens when $v=v_0$, its reduction can be represented by a fraction $\frac{1}{x^d}$, where $d$ is prime to $p$, by Proposition \ref{localAS}. We set the degeneration at $v_0$ to be that fraction. Finally, for $e \in E$ we let $A_e \subseteq D_K$ denote the subset of all points which specialize to the singular point $x_e \in \overline{D}$ corresponding to $e$. This is an open annulus. We define $\epsilon_e$ as the thickness of $A_e$ divided by $p$, i.e., the positive rational number such that 
\[ A_e \cong \{ x \mid \lvert p \rvert^{p \epsilon_e}_K < \lvert x \rvert_K <1 \}. \]

\begin{proposition}
\label{propactiontotree}
The datum $(T, \omega_v, \delta_v, \epsilon_e, h_b, h-1)$ defines a Hurwitz tree of type $\{h_1, \ldots, h_r\}$. Moreover, $h$ is the conductor, $0$ is the depth.
\end{proposition}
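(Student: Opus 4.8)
The plan is to verify each of the six Hurwitz tree axioms \ref{c1Hurwitz}--\ref{c6Hurwitz} in Definition \ref{defhurwitztree} for the datum $(T, \omega_v, \delta_v, \epsilon_e, h_b, h-1)$ constructed above, drawing on the refined Swan conductor machinery developed in \S\ref{secrefinedswan}. The underlying combinatorial structure is already a decorated tree by the construction in \S\ref{secsemistablemodel}, so the content is entirely in checking that the degeneration data satisfy the stated compatibilities. First I would fix notation: for each vertex $v$ the restriction $\psi|_{V_{v,K}}$ is a $\mathbb{Z}/p$-torsor over the affinoid $U_{v,K}$, and $\delta_v$, $\omega_v$ are its depth and differential Swan conductors, so that all the axioms become statements about how these invariants behave across the edges (annuli) of the tree.

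The heart of the argument is the propagation of degeneration data along an annulus, which is exactly Proposition \ref{propgoodannulus}, together with the local behavior of the depth function from Proposition \ref{propdeltalinear} and Corollary \ref{corleftrightderivative}. Here is how I expect each axiom to fall out. For \ref{c1Hurwitz}, I would argue that $\delta_v>0$ for $v\neq v_0$: since $\psi$ has good reduction and the depth function $\delta_\psi(\_,z)$ is continuous, piecewise linear, and (by the concavity discussion following Corollary \ref{corleftrightderivative}) strictly increasing as one moves inward from the boundary whenever branch points lie further in, any non-root vertex corresponds to a place where $\mathfrak{C}(\mathcal{D},\psi,\overline{x})\ge 2$, forcing positive depth. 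For \ref{c2Hurwitz}, the absence of zeros and poles of $\omega_v$ on $U_v$ is precisely Remark \ref{remarkgooddiff}: good reduction forbids $\omega(\psi|_{\mathcal{D}})$ from having zeros away from infinity, and poles can only occur at marked (branch) points or singular points, which are excluded from $U_v$. Axioms \ref{c3Hurwitz} and \ref{c5Hurwitz} are the two boundaries of the annulus $A_e$: applying Corollary \ref{coromegagood} and Corollary \ref{corleftrightderivative} on each side gives that the order of $\omega$ at the two reductions of $A_e$ are negatives shifted by $2$, which is exactly the matching of $-\ord_{x_e}\omega_{t(e)}-1$ with $\ord_{x_e}\omega_{s(e)}+1$, while the depth relation $\delta_{s(e)}+\epsilon_e d_e=\delta_{t(e)}$ is the linearity of the depth on the annulus with slope $d_e=\mathfrak{C}-1$, i.e.\ Proposition \ref{propgoodannulus} with $q=-d_e$. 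The prime-to-$p$ condition in \ref{c3Hurwitz} follows because $\mathfrak{C}(\mathcal{D},\psi,\overline{x})$ is a sum of conductors, each $\not\equiv 1 \bmod p$, and Saïdi's classification (Proposition \ref{propdiscbound}) forces the boundary conductor $m$ to be prime to $p$.

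For \ref{c4Hurwitz} I would use that the root corresponds to the boundary of the unit disc where $\psi$ has étale reduction ($\delta(\psi)=0$), so by Remark \ref{remarkboundaryswan}(\ref{remarkboundaryswan1}) the boundary Swan conductor equals $\sw_\phi(\overline{0})=d$, and this matches $\ord_{x_{e_0}}\omega_{t(e_0)}+1$ via the same annulus-boundary computation applied to $e_0$. Finally \ref{c6Hurwitz}, the pole order $h_b$ of $\omega_v$ at a branch point $x_b$, follows from Corollary \ref{coromegagood} applied at $\overline{x}=x_b$: good reduction gives the equality $\ord_{x_b}(\omega(\psi|_{\mathcal{D}}))=-\mathfrak{C}(\mathcal{D},\psi,x_b)=-h_b$ since the only branch point specializing to the residue class of $x_b$ is $x_b$ itself. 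The statements that $h$ is the conductor and $0$ is the depth are then immediate from $\delta_{v_0}=0$ (étale reduction at the root) and from $\sum_b h_b=d+1=h$, which is the different criterion (Proposition \ref{propdifferentcriterion}) combined with Proposition \ref{propdiscbound}.

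The main obstacle I anticipate is bookkeeping the sign and indexing conventions consistently across all edges, particularly reconciling the boundary Swan conductor $\sw_{\psi|_{\mathcal{D}}}(\overline{x})=-\ord_{\overline{x}}(\omega)-1$ with the directional interpretation (inward versus outward, source versus target) so that \ref{c3Hurwitz} and \ref{c5Hurwitz} come out with the correct signs. A subtler point is justifying that the differential conductor $\omega_v$, defined a priori only up to the choices inherent in the refined Swan conductor, genuinely descends to a well-defined exact differential form on $U_v$ with the claimed order data; this relies on Proposition \ref{proprefinedswanorderp}, which guarantees exactness of $\omega(\psi)$ for Artin-Schreier covers, and on checking that changing the parameter along the tree transforms $\omega_v$ compatibly. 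Once these conventions are pinned down, every axiom reduces to a direct citation of the corollaries in \S\ref{secgoodannulus}, so I expect the proof to be short modulo this careful alignment.
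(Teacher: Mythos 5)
Your proof is correct and follows essentially the same route as the paper's: both verify axioms \ref{c1Hurwitz}--\ref{c6Hurwitz} by citing Proposition \ref{propgoodannulus} for the edge compatibilities \ref{c3Hurwitz}, \ref{c4Hurwitz}, \ref{c5Hurwitz}, Remark \ref{remarkgooddiff} (i.e.\ Corollaries \ref{corgood} and \ref{coromegagood}) for the zero/pole conditions \ref{c2Hurwitz} and \ref{c6Hurwitz}, and the strict growth of the depth away from the root for \ref{c1Hurwitz}. One small caveat: your clause deriving the prime-to-$p$ condition in \ref{c3Hurwitz} from each conductor being $\not\equiv 1 \pmod{p}$ is not sufficient on its own (a sum of such conductors can still be $\equiv 1 \pmod{p}$), but your simultaneous appeal to Proposition \ref{propdiscbound} supplies the correct justification.
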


\begin{proof}
The proof is parallel to one for the mixed characteristic case \cite[\S 3.1]{2000math.....11098H}. The only differences are: all differential conductors are exact, each branch point contributes more to the degree of the different (see Remark \ref{remarksumofconductors}), and the depth can be arbitrarily large.

\ref{c3Hurwitz} It follows from Proposition \ref{propgoodannulus}, as $-\ord_{x_e}\omega_{t(e)}-1$ is the inside boundary conductor of the associated annulus and $\ord_{x_e}\omega_{t(e)}+1$ is its outside boundary one. 

\ref{c5Hurwitz}, \ref{c4Hurwitz} Suppose $e$ is an edge with initial depth $\delta_{s(e)}$, final depth $\delta_{t(e)}$, and thickness $\epsilon_e$. Then, as the restriction of the cover to the corresponding annulus has good reduction, Proposition \ref{propgoodannulus} shows that $\delta_{t(e)}=\delta_{s(e)}+\epsilon_e d_e$. 

\ref{c2Hurwitz}, \ref{c6Hurwitz} follow from Remark \ref{remarkgooddiff}.

\ref{c1Hurwitz} It follows immediately from the definition of the depth that $\delta_{v_0} \ge 0$. Moreover, by \ref{c5Hurwitz}, $\delta_v$ is a strictly increasing function, as $v$ goes away from the root.

Finally, it is immediate from the construction and the good reduction assumption that the depth is $0$ and the conductor is $h$, completing the proof.
\end{proof}

\begin{example}
\label{excalculatehurwitzmain}
Let $\psi$ be the $\mathbb{Z}/5$-cover in Example \ref{examplemain}. Then it follows from Example  \ref{examplecomputeswan} that the Hurwitz tree associated with $\psi$ has the following form.

\tikzstyle{level 1}=[level distance=4cm, sibling distance=2cm]
\tikzstyle{level 2}=[level distance=4cm, sibling distance=1cm]

\tikzstyle{bag} = [text width=6em, text centered]
\tikzstyle{end} = [circle, minimum width=3pt,fill, inner sep=0pt]
\[ 
\begin{tikzpicture}[grow=right, sloped]
\node[bag] {$\Big(0,\frac{1}{x^{11}}\Big)$}
child{
        node[bag] {$\Big(\textcolor{black}{11},\textcolor{black}{\frac{dx}{x^{7}(x-1)^5}}\Big)$}
    child {
                node[end, label=right:
                    {$5[t^5]$}] {}
                edge from parent
            }
    child {
        node[bag] {$\Big(\textcolor{black}{17},\textcolor{black}{\frac{dx}{x^4(x-1)^3}}\Big)$}        
            child {
                node[end, label=right:
                    {$3 [t^{10}]$}] {}
                edge from parent
            }
            child {
                node[end, label=right:
                    {$4 [0]$}]{}
                edge from parent
            }
            edge from parent 
            node[above] {$e_1$}
            node[below]  {$1$}
    }
    edge from parent
    node[above] {$e_0$}
    node[below] {$1$}
    };
\end{tikzpicture}
\]

\noindent At each vertex $v$ of the tree, the first component of the pair is the depth conductor at the boundary of the disc associated to $v$. When the depth is positive, the second component is the differential Swan conductor. When the depth is $0$, the second component represents the degeneration of the restriction. The rational number below each edge $e$ is the thickness of the corresponding annulus divided by $p$. The integer on the right at each leaf denotes the conductor of the associated branch point, and inside $[\cdot]$ is the branch point. We often disregard this information as you can see in Example \ref{examplehurwitzobstruction}. One can easily read off from the leaves that $\psi$ has type $[12] \xrightarrow{}[5,4,3]^{\top}$. 
\end{example}

By Proposition \ref{propactiontotree}, the existence of a Hurwitz tree of type $\{h_1, \ldots, h_r\}$ is necessary for the existence of a deformation of corresponding type. It gives us an obstruction for the deformation of $\mathbb{Z}/p$-covers, which we call the \textit{Hurwitz tree obstruction}.

\begin{example}
\label{examplehurwitzobstruction}
Suppose again that $p=5$. If there exists a deformation of type $[5] \xrightarrow{} [3,2]^{\top}$, then the associated Hurwitz tree must have the form below.

\tikzstyle{level 1}=[level distance=4cm, sibling distance=2cm]
\tikzstyle{level 2}=[level distance=4cm, sibling distance=1cm]

\tikzstyle{bag} = [text width=6em, text centered]
\tikzstyle{end} = [circle, minimum width=3pt,fill, inner sep=0pt]

\[ 
\begin{tikzpicture}[grow=right, sloped]
\node[bag] {$\Big(0,\frac{1}{x^{4}}\Big)$}
child{
        node[bag] {$\Big(\textcolor{black}{4\epsilon},\textcolor{black}{\frac{dx}{x^{3}(x-a)^2}}\Big)$}
    child {
                node[end, label=right:
                    {$3$}] {}
                edge from parent
            }
    child {
                node[end, label=right:
                    {$2$}] {}
                edge from parent
            }
    edge from parent
    node[above] {$e_0$}
    node[below] {$\epsilon$}
    };
\end{tikzpicture}
\]

\noindent Thus, there exists an exact differential form $\omega=\frac{dx}{x^3(x-a)^2}$, where $a \neq 0$. A straightforward calculation shows that the residue of $\omega$ at $0$ is $3/a^4$, which is a contradiction. Therefore, the Hurwitz tree obstruction does not vanish. Hence, there is no deformation of type $[5] \xrightarrow{} [3,2]^{\top}$.

\end{example}

\begin{proposition}
\label{propgooddefthenhurwitz}
Suppose a $\mathbb{Z}/p$-cover $\psi$ of a formal disc gives rise to a Hurwitz tree of type $\{h_1, \ldots, h_r\}$ and with depth zero from the construction at the beginning of \S \ref{seccovertotree}. Then $\psi$ is a deformation of type $[h_1, \ldots, h_r]^{\top}$.
\end{proposition}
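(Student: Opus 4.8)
The plan is to deduce from the Hurwitz-tree hypotheses that $\psi$ has good reduction, after which being a deformation of the stated type is immediate from the definitions. Taking $\mathcal{D}=\mathcal{D}[0]$ to be the full closed unit disc, Corollary \ref{corgood} tells us that $\psi$ has good reduction if and only if $\delta(\psi)=0$ and the vanishing-cycle inequality $\mathfrak{C}(\mathcal{D},\psi,\overline{0})\ge \sw_{\psi\lvert_{\mathcal{D}}}(\overline{0})+1$ is an equality. The first condition is free: by construction $\delta_{v_0}$ is the depth of $\psi$ on the root, i.e. on the boundary of $D_K$, so the hypothesis that the tree has depth zero says exactly $\delta(\psi)=\delta_{v_0}=0$; in particular the reduction $\phi$ of $\psi$ is well defined. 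It thus remains only to promote the inequality to an equality.

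To do this I would read off three equalities. First, since $\psi$ is admissible its whole branch locus lies in $D=U(\mathcal{D},\overline{0})$, whence $\mathfrak{C}(\mathcal{D},\psi,\overline{0})=\sum_{z\in\mathbb{B}(\psi)}h_z=\sum_{b\in B}h_b$, the sum of the leaf conductors of the tree (which by the construction of \S\ref{seccovertotree} are precisely the conductors of the branch points of $\psi$). Second, because the datum $(T,\omega_v,\delta_v,\epsilon_e,h_b,d)$ is by hypothesis an honest Hurwitz tree, Lemma \ref{lemmarootconductor} applies and gives $\sum_{b\in B}h_b=h=d+1$. Third, since $\delta(\psi)=0$ the reduction of $\psi$ on the root is recorded on the tree as the degeneration $1/x^{d}$ at $v_0$, that is, the one-point cover $y^p-y=1/x^{d}$ whose ramification jump at $\overline{0}$ is $d$; Remark \ref{remarkboundaryswan}(\ref{remarkboundaryswan1}) then yields $\sw_{\psi\lvert_{\mathcal{D}}}(\overline{0})=\sw_{\phi}(\overline{0})=d$. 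Chaining these gives $\mathfrak{C}(\mathcal{D},\psi,\overline{0})=d+1=\sw_{\psi\lvert_{\mathcal{D}}}(\overline{0})+1$, the desired equality.

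With $\delta(\psi)=0$ and this equality in hand, Corollary \ref{corgood} shows that $\psi$ has good reduction. By definition this means $\psi$ is a deformation of its reduction $\phi$, which is the smooth one-point cover of conductor $h=d+1$ (cf. the remark following Corollary \ref{corgood}); at the same time the generic fiber of $\psi$ is branched precisely at the $x_{b,K}$ with conductors $h_1,\dots,h_r$, so its branching datum is $[h_1,\dots,h_r]^{\top}$. Hence $\psi$ is a deformation of type $[h]\xrightarrow{}[h_1,\dots,h_r]^{\top}$, which is what is meant by a deformation of type $[h_1,\dots,h_r]^{\top}$.

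The one place that carries genuine content is the passage from the inequality of Corollary \ref{corgood} to an equality; this is exactly what separates an arbitrary admissible cover from one with good reduction, as Example \ref{exnonflat} illustrates (there the analogous degeneration data violate \ref{c4Hurwitz} and fail to assemble into a Hurwitz tree, and the inequality is strict). In our situation the equality is supplied entirely by the combinatorial additivity of conductors along the tree (Lemma \ref{lemmarootconductor}), so the main thing to keep straight is the twofold role of the integer $d$: as the pole order of the root degeneration $1/x^{d}$, giving $\sw_{\psi\lvert_{\mathcal{D}}}(\overline{0})=d$, and as $h-1=\sum_{b}h_b-1$ coming from Lemma \ref{lemmarootconductor}. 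Alternatively, the same conclusion can be phrased through the different criterion (Proposition \ref{propdifferentcriterion}), which the remark after Corollary \ref{corgood} records as equivalent.
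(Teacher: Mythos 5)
Your proof is correct and follows essentially the same route as the paper's: depth zero at the root gives \'etale reduction, Lemma \ref{lemmarootconductor} identifies the conductor of the reduction with $h=\sum_b h_b$, and Corollary \ref{corgood} (equivalently the different criterion) upgrades this to good reduction. You have merely expanded the paper's two-sentence argument by making the chain of equalities $\mathfrak{C}(\mathcal{D},\psi,\overline{0})=\sum_b h_b=d+1=\sw_{\psi\lvert_{\mathcal{D}}}(\overline{0})+1$ explicit.
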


\begin{proof}
It follows from the depth zero assumption that $\psi$ has {\'e}tale reduction. Moreover, as the reduction has conductor $h=\sum_{i=1}^r h_i$ by Lemma \ref{lemmarootconductor}, it is smooth by the different criterion (Proposition \ref{propdifferentcriterion}) or Corollary \ref{corgood}.
\end{proof}

\begin{example}
The tree below arises from the deformation in Example \ref{exnonflat} using the data from Remark \ref{remarksummarizeexnonflat}.

\tikzstyle{level 1}=[level distance=4cm, sibling distance=2cm]
\tikzstyle{level 2}=[level distance=4cm, sibling distance=1cm]

\tikzstyle{bag} = [text width=6em, text centered]
\tikzstyle{end} = [circle, minimum width=3pt,fill, inner sep=0pt]
\[ 
\begin{tikzpicture}[grow=right, sloped]
\node[bag] {$\Big(0,\frac{1}{x}\Big)$}
child{
        node[bag] {$\Big(2,{\frac{dx}{x^{2}(x-1)^2}}\Big)$}
    child {
                node[end, label=right:
                    {$2[t^2]$}] {}
                edge from parent
            }
    child {
                node[end, label=right:
                    {$2[0]$}] {}
                edge from parent
            }
    edge from parent
    node[above] {$e_0$}
    node[below] {$1$}
    };
\end{tikzpicture}
\]    
\noindent Note that $\Big(0,\frac{1}{x}\Big)$ (resp. $\Big(2, \frac{dx}{x^2(x-1)^2}\Big)$) is the degeneration type of $\psi$ (resp. $\psi\lvert_{\mathcal{D}[2]}$ ). It violates \ref{c4Hurwitz} and \ref{c5Hurwitz}, as $2=d \neq \ord_{z_{e_0}} \omega_{t(e_0)}+1=4$ and $2=\delta_{t(e_0)} \neq \delta_{s(e_0)}+\epsilon_{e_0} d_{e_0}=3$. Hence, it is not a flat deformation by Proposition \ref{propgooddefthenhurwitz}.
\end{example}

\section{Construction of \texorpdfstring{$\mathbb{Z}/p$}{Zp}-covers from a Hurwitz tree}
\label{sechurwitztocover}

In the previous section, we have associated a Hurwitz tree to a $\mathbb{Z}/p$-cover $\psi: \spec R[[Z]] \xrightarrow{} \spec R[[X]]$. The main result of this section is that this construction can be reversed.

\begin{theorem}
\label{theoremtreeaction}
Every Hurwitz tree $\mathcal{T}=(T, \omega_v, \delta_v, \epsilon_e, h_b, h-1)$ is associated to a $\mathbb{Z}/p$-cover $\psi$ of $\spec R'[[X]]$ for some finite extension $R'$ of $R$.
\end{theorem}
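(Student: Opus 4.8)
The plan is to invert the construction of \S\ref{seccovertotree}: I would build the cover $\psi$ one component at a time and then patch the pieces together along the annuli attached to the edges of $T$, which is the equal-characteristic analogue of Henrio's réalisation theorem \cite{2000math.....11098H}. I would organize the argument as an induction on the height of $\mathcal{T}$. After a finite extension $R'/R$ I may assume that all the data of the tree (the thicknesses $\epsilon_e$, the positions of the marked points, and the coefficients of the forms $\omega_v$) are defined over $R'$ and that all the radii $p^{-s}$ that occur are attained by elements of $K'=\Frac R'$; this is exactly where the enlargement of the base ring is needed.

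The building block is the realization of the degeneration type at a single vertex. Fix $v\neq v_0$. By \ref{c2Hurwitz} and \ref{c6Hurwitz} the exact form $\omega_v$ on $C_v\cong\mathbb{P}^1_k$ has poles only at the marked and singular points of $C_v$, of the prescribed orders, so I may write $\omega_v=d\overline{g_v}$ with $\overline{g_v}\in k(x)$. Lifting $\overline{g_v}$ to a rational function $g_v$ over $R'$ and setting, in the coordinate adapted to $C_v$,
\[ Y^p-Y=\pi^{-p\delta_v}\,g_v, \]
Proposition \ref{proprefinedswanorderp} shows that the resulting $\mathbb{Z}/p$-cover of the punctured disc $U_v$ has depth exactly $\delta_v$ and differential conductor $d\overline{g_v}=\omega_v$ (here $\overline{g_v}\notin\kappa^p$ automatically, since its differential is nonzero). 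For the root $v_0$ I instead use the prescribed degeneration $1/x^{d}$, lifted to $R'$, so that \ref{c4Hurwitz} fixes the boundary conductor $d=\ord_{x_{e_0}}\omega_{t(e_0)}+1$; when $\delta_{v_0}=0$ this is the étale reduction $y^p-y=1/x^d$ furnished by Proposition \ref{localAS}.

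Next I would glue. For each edge $e$ the associated annulus $A_e$ has thickness $p\epsilon_e$ and, by construction, contains no branch point. Conditions \ref{c3Hurwitz} and \ref{c5Hurwitz} are precisely the statements that the final degeneration type $e$ inherits from $C_{t(e)}$ and the initial degeneration type it inherits from $C_{s(e)}$ are compatible: the boundary conductors agree up to sign and the depths differ by $\epsilon_e d_e$, exactly the relation forced along an annulus by Proposition \ref{propgoodannulus}. Granting that a $\mathbb{Z}/p$-cover of an annulus is determined by its two boundary degeneration types, the restrictions of the two neighbouring pieces to $A_e$ may be identified, and I patch the pieces along these annuli (by formal/rigid patching, as in \cite{2000math.....11098H}) into a single $\mathbb{Z}/p$-cover $\psi$ of $\spec R'[[X]]$ whose branch points specialize according to $T$. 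Finally, Lemma \ref{lemmarootconductor} gives $h=\sum_b h_b$ and \ref{c4Hurwitz} gives boundary conductor $d=h-1$, so when $\delta_{v_0}=0$ Corollary \ref{corgood} shows $\psi$ has good reduction; in general $\psi$ realizes the prescribed degeneration at every vertex. Re-running the construction of \S\ref{seccovertotree} on $\psi$ then returns $\mathcal{T}$, since the depth, differential, thickness, and conductor data are all recovered by Propositions \ref{proprefinedswanorderp} and \ref{propgoodannulus}.

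The hard part will be the patching step, and it is exactly the point flagged in the remark after Proposition \ref{propgoodannulus}: matching the two boundary degeneration types does not by itself pin down a cover of the annulus, so one must know that such a cover is determined by its boundary data (Sa{\"i}di's stronger assertion, \cite[Prop.\ 3.3.9]{MR2377173} / \cite[Prop.\ 4.2.5]{MR2032453}) to make the identification canonical. If one does not wish to rely on that, the more self-contained route — natural given how explicitly the paper computes refined Swan conductors — is to avoid patching entirely: position the branch points so that their pairwise valuations reproduce the combinatorics and thicknesses of $T$, write $\psi$ globally as a single reduced Artin--Schreier equation whose partial fraction at each $x_b$ has conductor $h_b$, and choose the coefficients (with \ref{c3Hurwitz}--\ref{c6Hurwitz} serving as the solvability conditions) so that restricting to each $\mathcal{D}[s,z]$ and applying Proposition \ref{proprefinedswanorderp} reproduces $(\delta_v,\omega_v)$ at every vertex, exactly as in Examples \ref{examplecomputeswan} and \ref{exnonflat}. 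Verifying that the conditions \ref{c1Hurwitz}--\ref{c6Hurwitz} are \emph{sufficient}, rather than merely necessary, for such a global choice of coefficients to exist is the real content, and is where I expect the bulk of the work to lie.
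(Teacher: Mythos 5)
Your overall skeleton matches the paper's: induct on the height of $\mathcal{T}$, realize each vertex by $Y^p-Y=\pi^{-p\delta_v}g_v$ with $dg_v$ lifting $\omega_v$ (so that Proposition \ref{proprefinedswanorderp} returns the prescribed degeneration type), and assemble the pieces. But you leave the assembly step genuinely open, and you misidentify where its difficulty lies. You frame the patching as requiring that a $\mathbb{Z}/p$-cover of an annulus be determined by its two boundary degeneration types (Sa{\"i}di's \cite[Prop.~3.3.9]{MR2377173}), a statement the paper explicitly declines to rely on in the remark following Proposition \ref{propgoodannulus}. The paper never identifies two covers over a whole annulus. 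Instead it builds a \emph{separate, explicit} cover over each edge-annulus ($Y^p-Y=X^{d}/t^{p\delta_1}$, \S\ref{edge}) and over each leaf-disc, and glues adjacent pieces only along the one-dimensional boundaries $\spec R[[X^{-1}]]\{X\}$. There the needed rigidity is exactly Proposition \ref{propdiscbound}: after a suitable change of parameter, a $\mathbb{Z}/p$-cover of such a boundary is determined by its depth and boundary conductor, so conditions \ref{c3Hurwitz} and \ref{c5Hurwitz} guarantee the two sides agree on the overlap. The base pieces are then patched by Henrio's explicit Lemmas \ref{lemmaglueclosed} and \ref{lemmaglueopen}, and the covers are patched $G$-equivariantly upstairs because Propositions \ref{propvertex} and \ref{propedge} show the covering spaces of the local models are themselves punctured discs and annuli (and, by induction, the preimages over the subtrees are discs). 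Your proposal does not supply either of these two ingredients --- the boundary classification as the gluing mechanism, and the identification of the covering spaces of the local models --- so the patching step remains a gap as written.

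Your fallback route (position the branch points according to $T$ and write one global reduced Artin--Schreier equation whose restriction to every $\mathcal{D}[s,z]$ reproduces $(\delta_v,\omega_v)$) is not what the paper does, and you correctly concede that verifying sufficiency of \ref{c1Hurwitz}--\ref{c6Hurwitz} for solving that global coefficient problem is ``where the bulk of the work'' would lie; that work is not carried out, so this alternative does not close the gap either. In short: right architecture, but the one step you flag as hard is resolved in the paper by boundary gluing via Proposition \ref{propdiscbound} rather than by any uniqueness statement for covers of annuli, and your write-up stops short of completing it by either route.
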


\noindent The proof goes as follows. Using the geometry of the tree $\mathcal{T}$, we partition the open disc $\spec R[[X]]$ into sub-discs, sub-punctured-disc, and annuli as in \S \ref{secsemistablemodel}. In sections \ref{vertex}, \ref{edge}, \ref{leaf}, we construct explicitly a $\mathbb{Z}/p$-cover for each of these pieces that matches its associated datum on the tree. We then glue them together along their boundaries using the technique from \S \ref{glue}. Finally, the details of the proof will be given in \S \ref{proofmain}. 

\subsection{Realization of a vertex}
\label{vertex}

Consider a vertex $v \neq v_0$ on a tree with datum $(\delta_v \in \mathbb{Q}_{>0}, \omega_v=df)$ where $f \in \Frac k[x]$. Suppose $\omega_v$ has $r$ poles $p_1, p_2, \ldots, p_r \in \mathbb{A}^1_k$, and set $d_i:=-\ord_{p_i}(\omega)-1$, $d:=-\ord_{\infty}(\omega)-1$. Then it follows from \ref{c2Hurwitz} that $d+1=\sum_{i=1}^r (d_i+1)$. The points $p_i$ (resp. the point $\infty$) correspond to the singular points $x_{e_i}$ such that $s(e_i)=v$ (resp. to the unique singular point $x_e$ with $t(e)=v$). As in the previous section, $v$ corresponds to a complement of $r$ open discs inside one closed disc
\[ W_K \cong \{X \mid \lvert X \rvert \le 1, \lvert X-P_{i,K} \rvert \ge 1, \} \]
\noindent where $P_{i,K}$ is a lift of $p_i$ to $K$. $W_K$ is the $K$-analytic space of $W=\spec R\{X, (X-P_{i,K})^{-1}\}_{i=1, \ldots, r}$.

Define a $\mathbb{Z}/p$-cover $U \xrightarrow{\psi} W$ by
\begin{equation}
\label{eqnvertex}
    Y^p-Y=\frac{F}{t^{p\delta}},
\end{equation}
\noindent where $F$ is an element of $R\{X, (X-P_{i,K})^{-1}\}_{i=1, \ldots, r}$ that lifts $f$ (hence only have the $P_{i,K}$'s as poles). Let $\spec S_i:=R[[X-P_{i,K}]]\{(X-P_{i,K})^{-1} \}$ (resp. $\spec S_{\infty}:=R[[X^{-1}]]\{X \}$) be the boundary of the missing open disc containing the point $P_{i,K}$ (resp. $\infty$). The first declaration of the below proposition follows easily from Proposition \ref{proprefinedswanorderp} and Remark \ref{remarkboundaryswan}.

\begin{proposition}
\label{propvertex}
The cover $\psi$ restricts to a $\mathbb{Z}/p$-cover on $\spec S_i$ (resp. on $\spec S_{\infty}$) which has boundary conductor $d_i$ (resp. boundary conductor $d$) and depth $\delta$. In fact, the covering space $U$ is itself a punctured disc.
\end{proposition}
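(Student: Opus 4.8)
The plan is to dispose of the two assertions separately, spending almost all of the effort on the claim that $U$ is a punctured disc. For the boundary data I would simply apply Proposition \ref{proprefinedswanorderp} to equation (\ref{eqnvertex}): with respect to the Gauss valuation $\nu_0$ attached to $W$ the function $F$ is a unit whose reduction is $f\in\kappa=k(x)$, and since $\omega_v=df\neq 0$ we have $f\notin\kappa^p$. Taking $n=p\delta$ and $\pi=t$, the proposition gives that $\psi$ has depth $\delta$ and differential Swan conductor $\omega(\psi)=df=\omega_v$. The boundary $\spec S_i$ (resp.\ $\spec S_{\infty}$) corresponds to the point $p_i$ (resp.\ $\infty$) of $\overline{W}$, and $\omega_v$ has poles there of order $d_i+1$ and $d+1$ by the very definitions $d_i=-\ord_{p_i}\omega_v-1$ and $d=-\ord_{\infty}\omega_v-1$. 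Remark \ref{remarkboundaryswan}(\ref{remarkboundaryswan2}) then reads off boundary conductors $-\ord_{p_i}(\omega_v)-1=d_i$ and $-\ord_{\infty}(\omega_v)-1=d$, while the depth of each boundary restriction is the common value $\delta$. This settles the first assertion.

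For the second assertion, after a finite extension $R'/R$ I may assume $\psi$ is weakly unramified with respect to $\nu_0$, so that the special fiber $\overline{U}$ of the normalization is reduced. Following the normalization used in the proof of Proposition \ref{proprefinedswanorderp}, I set $Z=Y\pi^{\delta}$ and rewrite (\ref{eqnvertex}) as $Z^p-\pi^{\delta(p-1)}Z=F$; since $\delta>0$, reducing modulo $\pi$ yields the equation $\overline{Z}^{\,p}=f$ defining $\overline{U}$ over $\overline{W}$. Because $f\notin k(x)^p$, the morphism $\overline{U}\to\overline{W}$ is purely inseparable of degree $p$ between smooth curves over the (algebraically closed, hence perfect) field $k$. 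In particular $\overline{U}$ is integral, so $U$ is connected, the morphism is a homeomorphism on underlying spaces, and $k(\overline{W})$ coincides with $k(\overline{U})^p$; thus $\overline{W}\cong\overline{U}^{(p)}\cong\overline{U}$ and the genus is preserved. As $\overline{W}$ is $\mathbb{P}^1_k$ with the $r+1$ points $p_1,\dots,p_r,\infty$ deleted, $\overline{U}$ is again smooth of genus $0$ with exactly $r+1$ punctures.

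It remains to upgrade this statement about the reduction to the integral claim that $U$ is a punctured disc over $R'$. Here I would invoke Proposition \ref{propdiscbound}: the restriction of $\psi$ to each boundary $\spec S_i$ and to $\spec S_{\infty}$ has depth $\delta>0$, so case (\ref{additiveboundary}) shows that the preimage of each boundary is again the boundary of a disc, with explicit parameter $(Y\pi^{\delta})^{1/m}$. Hence $U$ is a smooth formal $R'$-curve whose reduction is smooth of genus $0$ and whose $r+1$ ends are each disc boundaries; by the description of semistable models in \S\ref{secsemistablemodel}, such an object is precisely the complement of $r$ open discs inside a closed disc, with the outer boundary lying over $\spec S_{\infty}$ and the $r$ inner boundaries over the $\spec S_i$. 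This is exactly the assertion that $U$ is a punctured disc.

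The step I expect to be the genuine obstacle is this last passage, from ``$\overline{U}$ is smooth of genus $0$ with $r+1$ ends'' to ``$U$ is integrally a punctured disc'': one must rule out extra components or hidden higher genus in the formal fiber, which is where the weak-unramifiedness reduction (forcing $\overline{U}$ reduced) together with the boundary classification of Proposition \ref{propdiscbound} carry the weight. By comparison, the boundary-conductor computation and the purely inseparable genus count are routine.
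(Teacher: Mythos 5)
Your handling of the boundary data is exactly the paper's (the paper disposes of it in the sentence preceding the statement, citing Proposition \ref{proprefinedswanorderp} and Remark \ref{remarkboundaryswan}), and your reduction of the second claim to the special fiber is also the paper's core idea: substitute $Z=Yt^{\delta}$, reduce (\ref{eqnvertex}) to $\overline{Z}^p=f$, and identify $\spec\overline{\mathcal{B}}$ with $\mathbb{P}^1_k$ minus $r+1$ points. Two soft spots. First, your justification of smoothness of $\overline{U}$ is misattributed: $f\notin k(x)^p$ only guarantees that the function-field extension is purely inseparable of degree $p$ (equivalently $df\not\equiv 0$); smoothness of $\spec\overline{\mathcal{A}}[\overline{Z}]/(\overline{Z}^p-f)$ at a given point requires $df=\omega_v$ to be nonvanishing there, which is precisely condition \ref{c2Hurwitz} --- the paper invokes it explicitly at this step, and without it the reduction could be a cuspidal curve. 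Your genus count via $\overline{W}\cong\overline{U}^{(p)}$ is fine once smoothness is in hand. Second, the two arguments part ways at the final ``integral'' step. The paper lifts coordinates: since $\mathcal{B}$ is a flat, complete $R$-algebra whose reduction is $k[v,(v-v_i)^{-1}]$, one lifts $v$ and the $v_i$ to $V, V_i$ and concludes $\mathcal{B}=R\{V,(V-V_i)^{-1}\}$. You instead invoke Proposition \ref{propdiscbound} at the ends and then assert that a smooth formal curve with genus-zero reduction and disc-boundary ends ``is precisely'' a punctured closed disc; that classification is nowhere proved in \S\ref{secsemistablemodel} (which only constructs semistable models of marked discs) and is essentially equivalent to the coordinate-lifting statement you are trying to sidestep. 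So your route is not wrong, but it outsources the one genuinely nontrivial point to a lemma the paper does not contain, whereas the paper's lift-the-coordinate argument closes it directly; you correctly identified this passage as the real obstacle.
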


\begin{proof}
Set $\mathcal{A}:=R\{X, (X-P_{i,K})^{-1}\}_{i=1, \ldots, r}$ and $\mathcal{B}:=\mathcal{A}[Z]/(Z^p-t^{(p-1)\delta}Z-F)$. The equation defining $\mathcal{B}$ is exactly (\ref{eqnvertex}) with $Z=Y/t^{\delta}$. Hence, $U$ is the spectrum of $\mathcal{B}$. One may think of its reduction, $\overline{\psi}: \spec \overline{\mathcal{B}} \xrightarrow[]{} \spec \overline{\mathcal{A}}$, as a degree-$p$-cover of $\mathbb{P}^1_k \setminus \{ \infty, p_i\}_{i=1,\ldots, r}$ that is defined by the equation $\overline{Z}^p-f$. In addition, as $df=\omega_v$ has neither a zero nor a pole on $\mathbb{P}^1_k \setminus \{ \infty, p_i\}_{i=1,\ldots, r}$ (by \ref{c2Hurwitz}), the scheme $\spec \overline{\mathcal{B}}$ is a smooth superelliptic curve. Thus, we may write $\overline{\mathcal{B}}=k[v,(v-v_i)^{-1}]_{i=1, \ldots, r}$. Therefore, there exists a $V$ (resp. $V_i$) in $\mathcal{B}$ that lifts $v$ (resp. $v_i$) and so that $\mathcal{B}=R\{V, (V-V_i)^{-1} \}_{i=1, \ldots, r}$.
\end{proof}

\subsection{Realization of an edge}
\label{edge}
Suppose $e$ is an edge of thickness $\epsilon$ with final degeneration type $(\delta_1,d_1)$ and initial degeneration type $(\delta_2,-d_2)$, where $d_1 = d_2=:d$ (which is positive as discussed in \S \ref{secgoodannulus}) and $\delta_1-d \epsilon = \delta_2$. The edge corresponds to an annulus $\mathcal{X}$ of thickness $p\epsilon$, which can be identified with $\spec R[[X,U]]/(XU-t^{p \epsilon})$. Consider a $G \cong \mathbb{Z}/p$-cover of $\psi: \mathcal{Y} \xrightarrow{} \mathcal{X}$ defined by
\begin{equation}
\label{eqnrealizeedgeinner}
    Y^p-Y=\frac{X^{d}}{t^{p\delta_1}}.
\end{equation}

\noindent Call $S_1:=\spec R[[X]]\{X^{-1}\}$ the inner boundary of the annulus, and $S_2:=\spec R[[U^{-1}]]\{U\}$ its outer boundary. Then it is immediate that the $\mathbb{Z}/p$-extension of $S_1$ has boundary conductor $d$ and depth $\delta_1$. Replace $X$ by $t^{p \epsilon}/U$ in (\ref{eqnrealizeedgeinner}), we get
\begin{equation}
\label{eqnrealizeedgeouter}
    Y^p-Y=\frac{1}{t^{p(\delta_1-d\epsilon)}U^{d}}.
\end{equation}

\begin{proposition}
\label{propedge}
The covering space $\mathcal{Y}$ is itself an annulus and the followings hold.
\begin{enumerate}
    \item The induced $G$-cover of $S_1$ has conductor $d$ and depth $\delta_1$.
    \item The induced $G$-cover of $S_2$ has conductor $-d$ and depth $\delta_1-d \epsilon$.
\end{enumerate}
\end{proposition}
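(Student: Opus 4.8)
The plan is to extract statements (1) and (2) directly from the explicit equations (\ref{eqnrealizeedgeinner}) and (\ref{eqnrealizeedgeouter}) via Sa\"idi's classification, and then to establish the annulus claim by controlling the reduction of $\psi$ uniformly along the whole radial family of circles comprising $\mathcal{X}$, mirroring the reduction argument used for a vertex in Proposition~\ref{propvertex}. First I would pass to a finite extension $R'$ of $R$ with $t^{\delta_1},t^{\epsilon}\in R'$ and rewrite (\ref{eqnrealizeedgeinner}) in integral form: putting $W:=Yt^{\delta_1}$ turns it into $W^p-t^{(p-1)\delta_1}W=X^{d}$, whose right-hand side already lies in $R'[[X,U]]/(XU-t^{p\epsilon})$. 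Thus $\mathcal{Y}$ is the normalization of $\mathcal{X}$ in this degree-$p$ extension, and since $X$ is a unit on the generic fiber of the annulus and $t$ is invertible, $X^{d}/t^{p\delta_1}$ has no poles on $\mathcal{X}_K$; hence $\mathcal{Y}_K\to\mathcal{X}_K$ is a finite \'etale $\mathbb{Z}/p$-torsor. It is connected because its reduction $\overline{W}^{\,p}=x^{d}$ yields a nontrivial (degree $p$) residue extension, as $p\nmid d$ forces $x^{d}\notin\kappa^p$.

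For the boundary data I would note that the restriction of (\ref{eqnrealizeedgeinner}) to $S_1$ is already in the normal form $Y^p-Y=X^{m}/t^{p\delta}$ of Proposition~\ref{propdiscbound}(\ref{additiveboundary}) (recall $\pi=t$) with $m=d$ and $\delta=\delta_1$; reading off that proposition, together with Proposition~\ref{proprefinedswanorderp} for the depth (using $X^{d}/t^{p\delta_1}=t^{-p\delta_1}\omega$ with $\overline{\omega}=x^{d}\notin\kappa^p$), gives boundary conductor $d$ and depth $\delta_1$, proving (1). Substituting $X=t^{p\epsilon}/U$ produces (\ref{eqnrealizeedgeouter}), which on $S_2$ is again this normal form with $m=-d$ and $\delta=\delta_1-d\epsilon=\delta_2\ge 0$; hence the cover of $S_2$ has boundary conductor $-d$ and depth $\delta_2$, proving (2). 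When $\delta_2=0$ one instead lands in the \'etale case (\ref{etaleboundary}) of Proposition~\ref{propdiscbound} with the same value of $m$, which gives the same conclusion.

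It then remains to verify that $\mathcal{Y}$ is an annulus. I would compute the depth of $\psi$ along the family of circles $\mathcal{D}[\rho]$, $\rho=\nu(X)\in[0,p\epsilon]$: writing $X=t^{\rho}X_{\rho}$ with $X_{\rho}$ a unit at $\nu_{\rho}$, equation (\ref{eqnrealizeedgeinner}) becomes $Y^p-Y=X_{\rho}^{d}/t^{p(\delta_1-d\rho/p)}$, so $\delta_{\psi}(\rho)=\delta_1-d\rho/p$ is linear, equal to $\delta_1$ at $\rho=0$ and to $\delta_2$ at $\rho=p\epsilon$, and strictly positive for $\rho\in[0,p\epsilon)$ since $d>0$ and $\delta_2\ge 0$. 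Consequently, on every interior circle the cover is radical, and by Proposition~\ref{proprefinedswanorderp} its reduction is the purely inseparable cover $\overline{W}^{\,p}=x_{\rho}^{d}$ of the $x_{\rho}$-line, where $W=Yt^{\delta_{\psi}(\rho)}$. Since $\gcd(p,d)=1$, the normalization of this curve is a $\mathbb{P}^1$ mapping to $\mathbb{P}^1_{x_\rho}$ by $x_\rho=\tau^p$, smooth away from and totally ramified over the two ends $x_\rho=0,\infty$. Thus each circle-slice of $\mathcal{Y}$ is a $\mathbb{G}_m$ whose two ends lie over the two ends of $\mathcal{X}$; together with the connectedness of $\mathcal{Y}_K$ and the explicit boundary parameters $(Yt^{\delta_1})^{1/d}$ and $(Yt^{\delta_2})^{-1/d}$ supplied by Proposition~\ref{propdiscbound} at $S_1$ and $S_2$, this exhibits $\mathcal{Y}$ as a smooth connected rigid space with exactly two annulus-ends and no interior branching, i.e.\ an open annulus.

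I expect this last step to be the main obstacle: the slice-by-slice computation must be assembled into a single semistable model in order to conclude that $\mathcal{Y}$ is globally an annulus, rather than merely that its two boundaries are annulus-shaped. The linearity and positivity of $\delta_{\psi}(\rho)$ are precisely what prevent kinks in the depth and keep the reduction smooth and rational across all of $\mathcal{X}$, so that $\psi$ has good reduction on the annulus and no genus is created in the interior; this is the constructive converse of Proposition~\ref{propgoodannulus}, and the care is entirely in that gluing.
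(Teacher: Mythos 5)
Your treatment of parts (1) and (2) is correct and is essentially the paper's: rescale $Y$ to put each boundary equation into the integral normal form of Proposition~\ref{propdiscbound}, then read off the depth and boundary conductor from that proposition together with Proposition~\ref{proprefinedswanorderp}. The linear depth computation $\delta_{\psi}(\rho)=\delta_1-d\rho/p$ along the circles $\mathcal{D}[\rho]$ is also correct.

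The gap is exactly where you flagged it: the claim that $\mathcal{Y}$ is an annulus is never actually established. Knowing that each circle-slice of the reduction normalizes to a $\mathbb{G}_m$ and that the two ends are annulus-shaped does not by itself produce a global isomorphism $\mathcal{Y}\cong\spec R[[Z_1,Z_2]]/(Z_1Z_2-t^{\epsilon})$, and you do not supply the semistable-model assembly you say is needed. The missing idea is in fact already sitting in the data you quote in passing. Proposition~\ref{propdiscbound} gives explicit parameters for the covering spaces of the two boundaries: with $Y_1:=Yt^{\delta_1}$ and $Y_2:=Yt^{\delta_1-d\epsilon}$, the parameters are $Z_1:=Y_1^{1/d}$ over $S_1$ and $Z_2:=Y_2^{-1/d}$ over $S_2$. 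These satisfy
\[ Z_1Z_2=\bigl(Yt^{\delta_1}\bigr)^{1/d}\bigl(Yt^{\delta_1-d\epsilon}\bigr)^{-1/d}=\bigl(t^{d\epsilon}\bigr)^{1/d}=t^{\epsilon}, \]
and a short computation (this is the paper's proof) identifies $\mathcal{Y}$ with $\spec R[[Z_1,Z_2]]/(Z_1Z_2-t^{\epsilon})$, i.e.\ an annulus of thickness $\epsilon$, with no slice-by-slice analysis and no appeal to a converse of Proposition~\ref{propgoodannulus}. I would replace your third paragraph with this relation; as written, the annulus assertion --- which is what Remark~\ref{remarkHenrioglueingedge} and the gluing in \S\ref{secglueclosedannulus} actually rely on --- remains unproved.
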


\begin{proof}
Set $Y=:Y_1/t^{\delta_1}$ and $Y=:Y_2/t^{(\delta_1-d\epsilon)}$. One may rewrite (\ref{eqnrealizeedgeinner}) (resp. (\ref{eqnrealizeedgeouter})) as $Y_1^p-t^{(p-1)\delta_1}Y_1=X^{d}$ (resp. $Y_2^p-t^{(p-1)(\delta_1-d\epsilon)}Y_2=1/U^{d}$). Hence, the covering space of the inner disc $\spec R[[X^{-1}]]$ (resp. the outer disc $\spec R[[U]]$) is also a disc with parameter  $Z_1:=Y_1^{1/d}$ (resp. $Z_2:=Y_2^{-1/d}$). A straightforward computation then confirms that $\mathcal{Y}$ is the spectrum of the $R$-algebra $R[[Z_1,Z_2]]/\allowbreak (Z_1Z_2-t^{\epsilon})$. The rest then immediately follows from Proposition \ref{propdiscbound}.
\end{proof}

\subsection{Realization of a leaf}
\label{leaf}

Consider a leaf $b_i \in B$ on a tree with depth $\delta>0$ and conductor $d_i \not\equiv 0 \pmod{p}$. It corresponds to a singular point $x_i$ and can be associated with a closed disc
\[ C_{i,K} =\{ X \mid \lvert X- x_i \rvert \le 1, \}  \]
\noindent which are $K$-points of $\spec R\{X-x_i\}$. Let $S:=\spec R[[(X-x_i)^{-1} ]] \{X-x_i\}$ be the boundary of $C_{i,K}$. Consider the $\mathbb{Z}/p$-cover $\psi_i$ of $C_{i,K}$ given by the following equation
\[ Y^p-Y=\frac{1}{t^{p \delta} (X-x_i)^{d_i}}. \]
\noindent Then, by applying Proposition \ref{proprefinedswanorderp}, we obtain the following result.

\begin{proposition}
The cover $\psi_i$ of $C_{i,K}$ has depth $\delta$ and boundary conductor $d_i$.
\end{proposition}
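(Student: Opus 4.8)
The plan is to put the defining equation into the reduced normal form demanded by Proposition \ref{proprefinedswanorderp} and then read off both invariants directly; the preamble already signals that this is the whole content. Writing $W := X - x_i$ for the standard parameter of the closed disc $\spec R\{W\}$, the cover $\psi_i$ is $Y^p - Y = u$ with $u = t^{-p\delta}\,W^{-d_i}$. With the fixed uniformizer $\pi = t$, I would set $n := p\delta$ and $\omega := W^{-d_i}$, so that $u = \pi^{-n}\omega$, and the entire argument reduces to checking that this presentation satisfies the hypotheses of Proposition \ref{proprefinedswanorderp}.

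First I would verify those hypotheses. On the closed disc the Gauss valuation satisfies $\nu_0(W) = 0$, so $\omega = W^{-d_i} \in \mathcal{O}_{\mathbb{K}}^{\times}$, and its image $\overline{\omega} = w^{-d_i}$ in the residue field $\kappa = k(w)$ fails to be a $p$-th power precisely because $d_i \not\equiv 0 \pmod p$. Since $\delta > 0$ forces $\nu_0(u) = -p\delta < 0$, this exhibits $u$ as reduced, with $\tau = t^{-p\delta}$ of negative valuation and $\omega$ a unit with $\overline{\omega}\notin\kappa^p$. Proposition \ref{proprefinedswanorderp} then applies in the case $n = p\delta > 0$, giving immediately that the depth of $\psi_i$ is $n/p = \delta$ (the first assertion) and that the differential Swan conductor is $\omega(\psi_i) = d\overline{\omega} = d(w^{-d_i}) = -d_i\,w^{-d_i-1}\,dw$.

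For the boundary conductor I would invoke Remark \ref{remarkboundaryswan}\,(\ref{remarkboundaryswan2}): since $\delta > 0$, the boundary conductor in the direction $\overline{0}$ toward the branch point is $-\ord_{\overline{0}}(\omega(\psi_i)) - 1$, and because $\omega(\psi_i)$ has a pole of order $d_i + 1 = h_i$ at $w = 0$ we obtain $(d_i+1) - 1 = d_i$, as claimed. Equivalently, one can note that the defining equation is already in the normal form of Proposition \ref{propdiscbound}(2), with exponent $-d_i$ prime to $p$, and read the degeneration type off from there. Because every quantity here is completely explicit there is no genuine analytic difficulty; the only points that require care are confirming that $u$ is reduced, which is exactly where the arithmetic condition $p \nmid d_i$ enters, and keeping track of which point the boundary conductor is measured against so that the order-of-vanishing computation returns $+d_i$ rather than its negative.
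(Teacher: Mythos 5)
Your proof is correct and takes essentially the same route as the paper, whose entire argument consists of invoking Proposition \ref{proprefinedswanorderp} (together with Remark \ref{remarkboundaryswan}); you have merely written out the verification that $u=t^{-p\delta}(X-x_i)^{-d_i}$ is reduced with respect to the Gauss valuation of $C_{i,K}$ (using $p\nmid d_i$) and the resulting computation $\delta(\psi_i)=p\delta/p=\delta$ and $-\ord_{\overline{0}}(\omega(\psi_i))-1=(d_i+1)-1=d_i$. Nothing is missing.
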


\subsection{Gluing the boundaries together}
\label{glue}

In this section, we glue covers of smaller pieces together to form one for a larger piece.

\subsubsection{Filling in a punctured disc}
\label{secfillpunctured}

Suppose $\delta \in \mathbb{Q}_{>0}$, and $d_1, d_2, \ldots, d_r$ are not-divisible-by-$p$ integers. Suppose $\psi_1, \psi_2, \ldots, \psi_r$ are $G \cong \mathbb{Z}/p$-covers of the open discs $\spec R[[Y_1]], \allowbreak \ldots,\allowbreak \spec R[[Y_r]]$, respectively (whose covering spaces are not necessary discs). Suppose $\psi_i$ has depth $\delta$ and boundary conductor $d_i$. It then follows from Proposition \ref{propdiscbound} that we may assume, after a change of variable, that $\psi_i$ induces on $M_i:=\spec R[[Y_i^{-1}]]\{Y_i\}$ a $\mathbb{Z}/p$-cover
$$ Y^p-Y=\frac{1}{t^{p  \delta} Y_i^{d_i}}. $$

\noindent Suppose $\psi$ is a $\mathbb{Z}/p$-cover of a closed punctured disc $\spec R\{X,(X-a_i)^{-1}\}_{i=1,\ldots,r}$ as in \S \ref{vertex}. We may assume that the restriction of $\psi$ to $S_i= \spec R[[X-a_i]]\{(X-a_i)^{-1}\}$, after a change of variable, is given by 
\[ Y^p-Y=\frac{1}{t^{p \delta}(X-a_i)^{d_i}}\]
(possible by Proposition \ref{propdiscbound}). We define the $R$-module isomorphism $\psi_i: R[[(X-a_i)^{-1}]]\{X-a_i\} \xrightarrow{} R[[Y_i^{-1}]]\{ Y_i\}$ by mapping $(X-a_i)$ to $Y_i$. It is clear from the construction that $\psi_i$ and $\psi$ coincides on the glued boundary. 

We would like to construct a $\mathbb{Z}/p$-cover of $\spec R\{X\}$ whose restriction to the closed punctured disc $W$ coincides with $\psi$, and whose restriction to each open disc $\spec R[[X-a_i]]$ is identical to $\psi_i$ (when $\spec R[[Y_i]]$ is identified with $\spec R[[X-a_i]]$). To do that, we fill in $W$ by identifying the boundary of $\spec R[[Y_i]]$ with $S_i$ just like above.

The below lemma shows explicitly how to patch together the disks on the \emph{bottom} and using the compatibilities on the top. 

\begin{lemma}[{\cite[Lemma 3.7]{2000math.....11098H}}]
\label{lemmaglueclosed}
Suppose the elements $a_1, \ldots, a_r$ of $R$ are pairwise distinct modulo $t$. We denote, for each $1 \le i \le r$, $\alpha_i$ (resp. $\beta_i$) the canonical injection of the $R$-algebras  $R\{X, (X-a_j)^{-1}\}_{1 \le j \le r}$ (resp. $R[[Y_i]]$) in $R[[X-a_i]]\{(X-a_i)^{-1}\}$ (resp. $R[[Y_i]]\{Y_i^{-1}\}$). Let $\psi_i: R[[Y_i]]\{Y_i^{-1}\} \xrightarrow{} R[[X-a_i]]\{(X-a_i)^{-1}\}$ be an isomorphism of $R$-algebras.
If $\theta$ is the $R$-module homomorphism 
$$R\{X, (X-a_j)^{-1} \}_{1 \le j \le r} \times \prod_{1 \le i \le r} R[[Y_i]] \xrightarrow{\theta} \prod_{1 \le i \le r} R[[X-a_i]]\{(X-a_i)^{-1}\}$$
$$\theta(f_0, f_1, \ldots, f_r)=(\alpha_1(f_0)-\psi_1 \circ \beta_1(f_1), \ldots, \alpha_r(f_0)-\psi_r \circ \beta_r(f_r)),$$
\noindent for $f_0 \in R\{X, (X-a_j)^{-1} \}_{1 \le j \le r}$ and $f_i \in R[[Y_i]]$ for $1 \le i \le r$, then $\theta$ is surjective and its kernel $N$ is an $R$-algebra $R\{Y_0\}$ as desired.
\end{lemma}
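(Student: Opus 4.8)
The plan is to recognize this as the ring-theoretic incarnation of a Mayer--Vietoris sequence for the admissible covering of a closed disc by the punctured disc $\spec R\{X,(X-a_j)^{-1}\}_{1\le j\le r}$ together with the $r$ open discs $\spec R[[Y_i]]$, glued along the boundary annuli $\spec R[[X-a_i]]\{(X-a_i)^{-1}\}$ via the $\psi_i$. In that language, surjectivity of $\theta$ is the vanishing of the relevant $H^1$ (a Tate-acyclicity / partition-of-unity statement), while $\ker\theta$ is the ring of global sections of the glued object, which I will identify with a closed-disc algebra. The two standard decompositions I would set up first are: (i) the partial-fraction decomposition of the punctured-disc algebra,
\[
R\{X,(X-a_j)^{-1}\}_{1\le j\le r} \;=\; R\{X\}\ \oplus\ \bigoplus_{j=1}^r (X-a_j)^{-1}R\{(X-a_j)^{-1}\},
\]
and (ii) the splitting of each boundary ring into its holomorphic and principal parts,
\[
R[[X-a_i]]\{(X-a_i)^{-1}\} \;=\; R[[X-a_i]]\ \oplus\ (X-a_i)^{-1}R\{(X-a_i)^{-1}\}.
\]
Both of these, and in particular the fact that a principal part lying in $(X-a_j)^{-1}R\{(X-a_j)^{-1}\}$ with $j\neq i$ restricts to a \emph{holomorphic} element of the $i$-th boundary ring, rely essentially on the hypothesis that the $a_i$ are pairwise distinct modulo $t$: this is exactly what makes $X-a_j$ a unit near $a_i$, so that the holes do not collide.

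For surjectivity I would argue in two steps. Given an arbitrary target vector $(g_i)_i$, first write each $g_i=g_i^{+}+g_i^{-}$ according to (ii), with $g_i^{+}\in R[[X-a_i]]$ and $g_i^{-}$ the principal part. Using (i), I would then choose
\[
f_0 \;=\; \sum_{i=1}^r p_i \in R\{X,(X-a_j)^{-1}\}_{1\le j\le r}, \qquad p_i\in (X-a_i)^{-1}R\{(X-a_i)^{-1}\},
\]
where $p_i$ is the principal part matching $g_i^{-}$ under the canonical (principal-part-preserving) restriction $\alpha_i$. Since $\alpha_i(p_i)=g_i^{-}$ while $\alpha_i(p_j)\in R[[X-a_i]]$ for $j\neq i$ (again by distinctness modulo $t$), the difference $e_i:=g_i-\alpha_i(f_0)$ lies in the holomorphic part $R[[X-a_i]]$. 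Because the gluing isomorphism $\psi_i$ is induced by the parameter substitution $Y_i\leftrightarrow X-a_i$, it carries the holomorphic subring $\beta_i(R[[Y_i]])=R[[Y_i]]$ isomorphically onto $R[[X-a_i]]$; hence I may set $f_i:=-\beta_i^{-1}\psi_i^{-1}(e_i)\in R[[Y_i]]$, and then $\psi_i\beta_i(f_i)=-e_i$, so that $\alpha_i(f_0)-\psi_i\beta_i(f_i)=\alpha_i(f_0)+e_i=g_i$, as required.

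For the kernel, the condition $(f_0,f_1,\dots,f_r)\in N$ reads $\alpha_i(f_0)=\psi_i\beta_i(f_i)$ for all $i$. Since $\psi_i\beta_i(f_i)\in\psi_i(R[[Y_i]])=R[[X-a_i]]$ is holomorphic, this forces $\alpha_i(f_0)$ to have no principal part at $a_i$ for every $i$; by decomposition (i) the principal part of $f_0$ at $a_i$ is exactly $p_i$, so all $p_i$ vanish and $f_0\in R\{X\}$. Conversely every $f_0\in R\{X\}$ restricts holomorphically at each boundary and therefore determines unique $f_i:=\beta_i^{-1}\psi_i^{-1}(\alpha_i(f_0))\in R[[Y_i]]$; the resulting map $N\to R\{X\}$, $(f_0,\dots,f_r)\mapsto f_0$, is then an $R$-algebra isomorphism, injectivity being immediate from the injectivity of $\psi_i$ and $\beta_i$. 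Renaming $X$ as $Y_0$ identifies $N$ with the closed-disc algebra $R\{Y_0\}$, as claimed.

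The main obstacle I anticipate is setting up the partial-fraction decomposition (i) rigorously --- a Mittag--Leffler statement for the affinoid $\spec R\{X,(X-a_j)^{-1}\}_j$ asserting that global functions split uniquely into a part holomorphic on the whole closed disc and principal parts at the centers $a_j$ --- together with checking that each gluing isomorphism $\psi_i$ respects the holomorphic/principal splitting of the boundary rings. Both hinge on the pairwise distinctness of the $a_i$ modulo $t$, which keeps the principal parts at distinct centers independent and mutually holomorphic; this is the hypothesis that cannot be dropped. Once (i) and this compatibility are in hand, surjectivity of $\theta$ and the identification $\ker\theta\cong R\{Y_0\}$ follow by the formal manipulations above, so the remainder is a routine diagram-chase.
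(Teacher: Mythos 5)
First, a point of comparison: the paper does not prove this lemma at all --- it is quoted verbatim from Henrio (\cite[Lemma 3.7]{2000math.....11098H}), so there is no in-paper argument to measure yours against. Judged on its own, your Mittag--Leffler strategy is the right one: decomposition (i) is the standard partial-fraction presentation of $R\{X,(X-a_j)^{-1}\}_{j}$ (the identities $W_iW_j=\frac{1}{a_j-a_i}(W_i-W_j)$ and $XW_i=1+a_iW_i$, which use exactly the hypothesis that the $a_i$ are distinct modulo $t$, reduce everything to a holomorphic part plus principal parts), and your matching of principal parts via $f_0=\sum_i p_i$ together with absorbing the holomorphic remainder into the $f_i$ is how the surjectivity and the kernel computation should go.

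The genuine gap is the sentence ``Because the gluing isomorphism $\psi_i$ is induced by the parameter substitution $Y_i\leftrightarrow X-a_i$, it carries $R[[Y_i]]$ isomorphically onto $R[[X-a_i]]$.'' The lemma assumes no such thing: $\psi_i$ is an \emph{arbitrary} $R$-algebra isomorphism of the boundary rings, and such an isomorphism need not respect the splitting (ii) on the nose. For example, $U\mapsto U+tU^{-1}$ extends to an $R$-algebra automorphism of $R[[U]]\{U^{-1}\}$ that moves $U$ out of $R[[U]]$; precomposing the parameter substitution with it yields a legitimate $\psi_i$ for which your assignments $f_i:=-\beta_i^{-1}\psi_i^{-1}(e_i)$ and $f_i:=\beta_i^{-1}\psi_i^{-1}(\alpha_i(f_0))$ fail to land in $R[[Y_i]]$, so both the surjectivity and the kernel arguments break as written. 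What saves the statement is that every such $\psi_i$ \emph{does} respect the splitting modulo $t$: it induces a $k$-isomorphism $k((y_i))\cong k((x-\overline{a}_i))$, and any such carries $k[[y_i]]$ onto $k[[x-\overline{a}_i]]$. Henrio's proof therefore runs precisely your argument on the reductions modulo $t$ and then bootstraps by successive approximation modulo $t^n$, using that all the rings involved are $t$-adically complete and flat over $R$, to get surjectivity of $\theta$ and to exhibit $N$ as a flat lift of $k[y_0]$, i.e.\ as $R\{Y_0\}$ for a parameter $Y_0$ that is in general \emph{not} simply $X$. Either restrict the statement to the parameter-substitution $\psi_i$ actually used in \S\ref{secfillpunctured} (in which case your proof is essentially complete once (i) is established), or supply the mod-$t^n$ approximation step; as it stands the proof establishes a strictly weaker lemma.
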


Recall that for each $i$, $\psi_i\lvert_{M_i}$ and $\psi\lvert_{S_i}$ coincides via the identification $X-a_i \mapsto Y_i$. We thus can also glue together the covers $\psi$ and $\psi_i$'s along the lifts of their boundaries $M_i$'s and $S_i$'s on the top (which are boundaries themselves) in a $G$-equivariant way to obtain a well-defined $G$-action on the whole lift. The result is a $\mathbb{Z}/p$-cover of $R\{Y_0\}$ that restricts to $\psi$ on $\spec R\{X, (X-a_i)^{-1}\}_{i=1, 2 \cdot, r}$ and $\psi_i$ on $\spec R[[Y_i]]$ as desired. Note that, the preimage of $R\{Y_0\}$ under $\psi$ is not necessary a disc. To prove that the final cover has good reduction, we only need to show that it gives rise to a Hurwitz tree with depth zero (Proposition \ref{propgooddefthenhurwitz}).  
 
\begin{remark}
\label{remarkHenrioglueingvertex}
In \cite{2000math.....11098H}, Henrio pastes the discs and punctured discs on the \emph{top} together $G$-equivariantly using his Lemma 3.7. We can do the exactly same thing as the covering space of a punctured disc realized in \S \ref{vertex} is also a punctured disc (by Proposition \ref{propvertex}), and one may assume that the preimages of the $\psi_i$'s are likewise discs using induction. 
\end{remark}

\subsubsection{Glueing a closed disc with an annulus}

\label{secglueclosedannulus}

Suppose we are given $\mathbb{Z}/p$-cover $\psi$ of an annulus $\mathcal{X}= \spec R[[X,U]]/ \allowbreak (XU-t^{p \cdot \epsilon})$ as in \S \ref{edge} and a $\mathbb{Z}/p$-cover $\psi'$ of $\spec R\{Y\}$ whose restriction to $\spec R[[Y^{-1}]]\{Y\}$ has depth $\delta_2$ and conductor $m_2$. As the previous subsection, we can use the following lemma to glue the boundary of the closed disc to the inside boundary of the annulus at the bottom to form an open disc. 

\begin{lemma}[{\cite[Lemma 3.8]{2000math.....11098H}}]
\label{lemmaglueopen}
Suppose $e$ is a strictly positive integer, $\beta$ the canonical injection of $R[[X,U]]/(XU-t^{p \cdot e})$ in $R[[U]]\{U^{-1}\}$, $\alpha$ is the canonical injection of $R\{Y\}$ to $R[[Y^{-1}]]\{Y\}$ and $\psi$ is an isomorphism of $R$-algebras $R[[X]]\{X^{-1}\}$ and $R[[Y^{-1}]]\{Y\}$ (by mapping $X$ to $Y^{-1}$). If $\theta$ is a homomorphism of $R$-modules
\[ R\{Y\} \times  \frac{R[[X,U]]}{XU-t^{p \cdot e}} \xrightarrow{} R[[Y^{-1}]]\{Y\}\]
\noindent defined by $\theta(f,g):=\alpha(f)-\psi \circ \beta(g)$, then $\theta$ is surjective, and its kernel $N$ is an algebra $R[[Y_o]]$.

\end{lemma}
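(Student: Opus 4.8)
The plan is to carry out Henrio's explicit gluing computation, making every ring concrete via Laurent expansions; conceptually $N=\ker\theta$ is the fibre product $R\{Y\}\times_{R[[Y^{-1}]]\{Y\}}A$ with $A:=R[[X,U]]/(XU-t^{pe})$, i.e.\ the ring of functions on the formal object glued from the closed disc $\spec R\{Y\}$ and the annulus $\spec A$ along their common boundary circle, and the two assertions say exactly that this gluing is effective and produces an open disc. First I would rewrite $A$ in the single coordinate $X$: since $U=t^{pe}/X$, every element of $A$ has the shape $\sum_{i\ge 0}b_iX^i+\sum_{j\ge 1}c_jt^{pej}X^{-j}$ with $b_i,c_j\in R$. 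As $\beta$ restricts to the inner boundary $R[[X]]\{X^{-1}\}$ and $\psi$ sends $X\mapsto Y^{-1}$, one gets $\psi\circ\beta(g)=\sum_{i\ge 0}b_iY^{-i}+\sum_{j\ge 1}c_jt^{pej}Y^{j}$, whereas $\alpha$ embeds $f=\sum_{i\ge 0}a_iY^i\in R\{Y\}$ as the same series in $R[[Y^{-1}]]\{Y\}$. I would record at the outset that both images genuinely land in $R[[Y^{-1}]]\{Y\}$: the weight $t^{pej}$ forces $\nu(c_jt^{pej})\ge pej\to\infty$, which is precisely the decay required of the positive tail, and the decay of the $a_i$ is the defining condition of $R\{Y\}$.

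For surjectivity, given $h=\sum_{i\in\mathbb{Z}}h_iY^i\in R[[Y^{-1}]]\{Y\}$ I would split by the sign of the exponent. Take $f=\sum_{i\ge 0}h_iY^i$, which lies in $R\{Y\}$ exactly because $\nu(h_i)\to\infty$, and $g=-\sum_{i\ge 1}h_{-i}X^i\in R[[X]]\subseteq A$, where no decay on the $h_{-i}\in R$ is required. Then $\psi\circ\beta(g)=-\sum_{i\ge 1}h_{-i}Y^{-i}$, so $\theta(f,g)=\alpha(f)-\psi\circ\beta(g)=h$, and $\theta$ is onto.

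Next I would compute $N$ by equating $\alpha(f)=\psi\circ\beta(g)$ coefficient by coefficient. Matching negative powers of $Y$ kills the non-constant $X$-part of $g$ (so $b_i=0$ for $i\ge 1$); matching the constant gives $a_0=b_0$; and matching $Y^j$ for $j\ge 1$ gives $a_j=c_jt^{pej}$. Hence a compatible pair is determined freely by $a_0=b_0\in R$ and $c_1,c_2,\dots\in R$, and it is the value at $\sum_j c_jY_0^{\,j}$ (with $c_0:=a_0$) of the $R$-algebra map $R[[Y_0]]\to R\{Y\}\times A$ sending $Y_0\mapsto(t^{pe}Y,\,U)$. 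This $Y_0$ truly lies in $N$, since $\psi\circ\beta(U)=t^{pe}Y=\alpha(t^{pe}Y)$; the map is continuous because $Y_0$ is topologically nilpotent, injective because $\sum_n r_nt^{pen}Y^n=0$ forces every $r_n=0$, and surjective onto $N$ by the coefficient description above. This identifies $N\cong R[[Y_0]]$, the open disc with $Y_0=t^{pe}Y$ on the disc side and $Y_0=U$ on the annulus side.

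I expect the only real obstacle to be the bookkeeping of the three convergence conditions---the decay in $R\{Y\}$, the decay in $R[[Y^{-1}]]\{Y\}$, and membership in $A$---kept mutually consistent; no genuine estimate is involved. The point hidden inside the bookkeeping is that the $t^{pej}$-weighting built into $A$ (via $U=t^{pe}/X$) is exactly what converts the unrestricted power-series ring $R[[Y_0]]$, which carries no decay condition at all, into the correct gluing, and hence forces the answer to be an \emph{open} disc rather than a closed one. This is the companion to Lemma~\ref{lemmaglueclosed} and proceeds in the same spirit, with the annulus now playing the role that the missing open discs played there.
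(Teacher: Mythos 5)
Your proof is correct; the paper itself gives no argument for this lemma (it is imported verbatim from Henrio's Lemme 3.8), and your explicit Laurent-coefficient computation is exactly the equal-characteristic analogue of that argument: writing $A=R[[X,U]]/(XU-t^{pe})$ in the coordinate $X$, matching coefficients to get surjectivity and the description $a_0=b_0$, $a_j=c_jt^{pej}$ of the kernel, and identifying $N\cong R[[Y_0]]$ via $Y_0\mapsto(t^{pe}Y,U)$. You also correctly (and necessarily) read $\beta$ as landing in $R[[X]]\{X^{-1}\}$ so that $\psi\circ\beta$ is defined, and you isolate the one genuinely important point, namely that the weight $t^{pej}$ coming from $U=t^{pe}/X$ is what turns the unrestricted ring $R[[Y_0]]$ into the correct glued object, i.e.\ an open rather than closed disc.
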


As before, by patching together the cover $\psi$ and $\psi'$ using the above gluing on the bottom, we also attain a $\mathbb{Z}/p$-cover of an open unit disc corresponding to $\spec R[[U]]$ that restricts to $\psi$ on $\spec R[[X,U]]/(XU-t^{p \cdot \epsilon})$ and to $\psi'$ on $\spec R\{Y\}$.

\begin{remark}
\label{remarkHenrioglueingedge}
The method of glueing covering spaces on the top (discussed in Remark \ref{remarkHenrioglueingvertex}) can also apply here as we show the preimage of $\mathcal{X}$ is also an annulus (Proposition \ref{propedge}). As in the previous remark, we need the preimage of $\psi'$ to be a closed disc.
\end{remark}


\subsection{Proof of Theorem \ref{theoremtreeaction}}
\label{proofmain}

Let $\mathcal{T}=(C,\omega_v, \delta_v, \epsilon_e,h_b, h-1)$ be a Hurwitz tree with conductor $h-1$ and depth $\delta$. We call a $G=\mathbb{Z}/p$-cover of the open disc a \emph{realization} of $\mathcal{T}$ if $\mathcal{T}$ is associated to this cover by the construction in \S \ref{seccovertotree}. Theorem \ref{theoremtreeaction} claims that we can realize $\mathcal{T}$. We will prove this claim by induction on the height of the tree $\mathcal{T}$. Suppose first that the height of $\mathcal{T}$ is one. Hence, the associated Hurwitz tree has the form like in Figure \ref{treeequidistant}. Let $e_0$ be the trunk of $\mathcal{T}$, we set $v_1=t(e_0)$. Then $C_0:=C_{v_1}$ is the unique component of $C$ which contains the distinguished point $x_0 \in C$. Set $\delta_1:=\delta_{v_1}$, and suppose $b_1, \ldots, b_r$ are the leaves with source $v$, and the $x_i:=x_{e_i}$ are the corresponding singular points associated to the leaves. Let $h_i:=h_{b_i}$. By \S \ref{leaf}, there exists an open unit disc $C_{i,K}$ over some finite extension $K$ of $k((t))$, together with a $G$-cover $\psi_{v_1,i}$, whose depth is $\delta_1$ and whose boundary conductor is $h_i$. Let $\spec S_i$ be the boundary of $C_{i,K}$. By assumption, the differential form $\omega:=\omega_{v_1}$ on $C_0$ has a zero of order $d_{t(e_0)}-1$ at $x_0$ and poles of order $d_i+1$ at $x_i$. Let $W_{v_1,K}$ be the punctured disc with the $G$-cover constructed in Section \ref{vertex}, starting from the datum $(C_0,\omega,\delta_1)$. By Proposition \ref{propdiscbound} and Section \ref{edge}, we can identify the boundary of the missing open disc corresponding to the point $x_i$ with $\spec S_i$ in a way which makes $\psi_{v_1,i}$ and $\psi_{v_1}$ coincides on the boundary. We can now use Lemma \ref{lemmaglueclosed} to patch together the punctured disc $W_{v_1,K}$ and the discs $C_{i,K}$, in a $G$-equivariant way. The result is a closed disc $C_{v_1,K}\cong \{ C_{v_1} \mid \lvert C_{v_1} \rvert \le 1\}$, together with a $G$-cover $\psi_{v_1}$. By Proposition \ref{propvertex} and the construction, the restriction of the cover to the boundary of $C_{v_1,K}$ has conductor $d_{t(e_0)}$ and depth $\delta_1$. 
\begin{figure}[ht]
\tikzstyle{level 1}=[level distance=6cm, sibling distance=2cm]
\tikzstyle{level 2}=[level distance=4cm, sibling distance=1cm]
\tikzstyle{bag} = [text width=9.34em, text centered]
\tikzstyle{end} = [circle, minimum width=3pt,fill, inner sep=0pt]
$$
\begin{tikzpicture}[grow=right, sloped]
\node[bag] [text width=4em] {$\Big(0,\frac{1}{x^{l-1}}\Big)$}
child{
        node[bag][text width=9.34em] {$\Big(\textcolor{black}{\epsilon(l-1)},\textcolor{black}{\frac{dx}{\prod_{i=1}^r(x-P_i)^{l_i}}}\Big)$}
    child {
                node(a)[end, label=right:
                    {$l_r$}] {}
                edge from parent
            }
    child {
                node(b)[end, label=right:
                    {$l_2$}] {}
                edge from parent
            }
    child {
                node[end, label=right:
                    {$l_1$}] {}
                edge from parent
            }
    edge from parent
    node[above] {$e_0$}
    node[below] {$\epsilon$}
    };
\path (a) -- node[auto=false]{\ldots} (b);
\end{tikzpicture}
$$
\caption{A tree of height one}
\label{treeequidistant}
\end{figure}
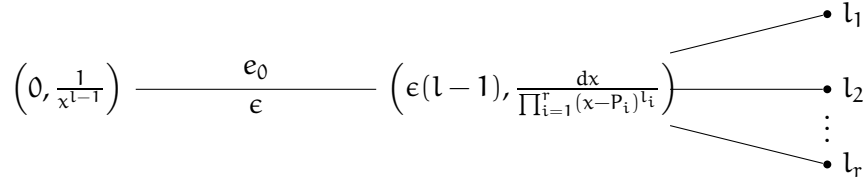

Let $A_K$ be the open annulus with the $G$-cover constructed in Section \ref{edge}, starting from the target datum $(\delta_1, d_{t(e_0)})$ and the initial datum $(\delta, -d_{s(e_0)}=-d)$. By \S \ref{secglueclosedannulus}, we can identify the boundary of $X_{v_1,K}$ with the "inner boundary" (associated with the target datum) of $A_K$, together with the $G$-cover, along these boundaries. The result is a $G$-cover $\psi$ of an open disc with boundary conductor $d$, depth $\delta$. By construction $\psi$ is a realization of the Hurwitz tree $\mathcal{T}$. That completes the base case of the induction.

Therefore, we may assume that we can realize all Hurwitz trees with lower height than $C$. Again, let $e_0$ be the trunk of $\mathcal{T}$, $v_1=t(e_0)$, $\delta_1:=\delta_{v_1}$, and suppose $e_1, \ldots, e_r$ are the edges with source $v$, and $x_i:=x_{e_i}$ the corresponding singular points. Let $d_i:=d_{s(e_i)}$. Let $\mathcal{T}_i \subsetneq \mathcal{T}$ be the subtree of $\mathcal{T}$ which contains the point $x_i$ but not the component $C_0$. It is clear that $\mathcal{T}_i$ inherits from $\mathcal{T}$ the structure of a Hurwitz tree, with conductor $h_i$, depth $\delta_1$, and height strictly less than $\mathcal{T}$. By our induction hypothesis, there exists an open unit disc $C_{i,K}$ over some finite inseparable extension $K$ of $k((t))$, together with a $G$-cover $\psi_{v_1,i}$, whose associated Hurwitz tree is $\mathcal{T}_i$. We than can apply the same process as in the base case to construct a $G$-cover $\psi$ that realizes the tree $\mathcal{T}$. This completes the proof of the theorem. 
\qed

\begin{remark}
In \cite{MR2254623}, Bouw and Wewers generalize the notion of Hurwitz tree from \cite{2000math.....11098H} to the case where $G=\mathbb{Z}/p \rtimes_{\chi} \mathbb{Z}/m$ ($m$ is prime to $p$) by adding an extra piece of information, which they call the \textit{tame inertia character}, to Henrio's tree. It comes from the group homomorphism $\chi: \mathbb{Z}/m \xrightarrow{} \aut(\mathbb{Z}/p)$ that defines the semi-direct product. Furthermore, they prove an analog of \cite[Corollary 1.8]{2000math.....11098H} and Proposition \ref{propdiscbound}, which says a $G$-action on a boundary of a disc is determined by its depth, its boundary conductor, and its tame inertia character \cite[Proposition 2.3]{MR2254623}. Using this fact, they utilize Henrio's technique to prove that every local $D_p$-cover in characteristic $p$ lifts to characteristic zero \cite[Theorem 4.4]{MR2254623}. We would expect an analogous result for the equal characteristic case using a parallel theory.
\end{remark}

\subsection{Hurwitz trees and deformations}

\label{secgoodtreedeformation}
Combining Theorem \ref{theoremtreeaction} and Proposition \ref{propgooddefthenhurwitz}, we acquire the following result.

\begin{corollary}
\label{corgoodtreedeformation}
Let $\phi: Z_k \xrightarrow{} X_k$ be a local $G$-cover with conductor $h-1$. Then there exists a deformation of $\phi$ over $k[[t]]$ of type $[h] \xrightarrow{} [h_1, \ldots, h_r]^{\top}$ if and only if there exists a Hurwitz tree of type $\{h_1, \ldots, h_r\}$ and with depth zero.
\end{corollary}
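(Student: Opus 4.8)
The plan is to prove the two implications separately, assembling results already established rather than arguing from scratch; the corollary is essentially the bookkeeping that packages Theorem \ref{theoremtreeaction} and Proposition \ref{propgooddefthenhurwitz} together. Throughout I would work in the reduced, admissible local situation justified in \S\ref{secreducetolocal}, so that $\phi$ is the one-point $\mathbb{Z}/p$-cover of $\spec k[[x]]$ of ramification jump $h-1$ and any deformation is an admissible $\mathbb{Z}/p$-cover of $\spec R[[X]]$.

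For the forward implication, I would start from a deformation $\psi$ of $\phi$ over $k[[t]]$ of type $[h]\xrightarrow{}[h_1,\ldots,h_r]^{\top}$. Being a deformation, $\psi$ has étale, hence good, reduction, so $\delta(\psi)=0$ and the setup opening \S\ref{seccovertotree} applies verbatim. The construction there attaches to $\psi$ a decorated tree carrying depths, differential conductors, thicknesses and conductors, and Proposition \ref{propactiontotree} certifies that this datum is a genuine Hurwitz tree of type $\{h_1,\ldots,h_r\}$ whose root depth $\delta_{v_0}$ is $0$. That produces the required depth-zero tree directly, with no further work.

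For the converse, I would begin with a Hurwitz tree $\mathcal{T}$ of type $\{h_1,\ldots,h_r\}$ and depth zero. Theorem \ref{theoremtreeaction} yields a finite extension $R'/R$ and a $\mathbb{Z}/p$-cover $\psi$ of $\spec R'[[X]]$ \emph{realizing} $\mathcal{T}$, i.e.\ such that the \S\ref{seccovertotree} construction recovers $\mathcal{T}$ from $\psi$. Since $\mathcal{T}$ has depth zero, Proposition \ref{propgooddefthenhurwitz} applies and shows $\psi$ is a deformation of type $[h_1,\ldots,h_r]^{\top}$; in particular its reduction is étale and smooth. To finish I must identify that reduction with $\phi$: by Lemma \ref{lemmarootconductor} the conductor of $\mathcal{T}$ is $h=\sum_i h_i$, so the reduction is a one-point local Artin--Schreier cover of ramification jump $h-1$, and Proposition \ref{localAS} (local covers are determined by their ramification jump) forces it to be isomorphic to $\phi$. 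Hence $\psi$ is a deformation of $\phi$ of the asserted type.

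The one step I expect to require genuine care is the passage to the finite extension $R'$ in Theorem \ref{theoremtreeaction}, since the corollary demands a deformation over $k[[t]]$ itself. Here I would invoke that $k$ is algebraically closed, so every finite extension of $R=k[[t]]$ is again a complete discrete valuation ring $R'\cong k[[t']]$ with residue field $k$; after renaming the uniformizer, a $\mathbb{Z}/p$-cover of $R'[[X]]$ is precisely a deformation over $k[[t]]$ in the sense of \S\ref{secreducetolocal}, so enlarging the base does not weaken the conclusion. I would also verify that the phrase ``depth zero'' and the \S\ref{seccovertotree} construction invoked in both Theorem \ref{theoremtreeaction} and Proposition \ref{propgooddefthenhurwitz} refer to the identical construction, which they do; this is what lets the two results glue without friction.
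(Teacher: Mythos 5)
Your proposal is correct and follows essentially the same route as the paper, which derives the corollary by combining Theorem \ref{theoremtreeaction} with Proposition \ref{propgooddefthenhurwitz} (and, for the forward direction, the construction of \S\ref{seccovertotree} together with Proposition \ref{propactiontotree}). The extra details you supply --- identifying the reduction with $\phi$ via Lemma \ref{lemmarootconductor} and Proposition \ref{localAS}, and noting that a finite extension $R'$ of $k[[t]]$ is again isomorphic to $k[[t']]$ --- are points the paper leaves implicit, and they are handled correctly.
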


\begin{remark}
The only difference between Corollary \ref{corgoodtreedeformation} and Theorem \ref{theoremmainlocal} is that we still need the Hurwitz tree in the statement to have depth zero. That requirement will be abolished by Proposition \ref{propchaindiffforms}.
\end{remark}









\begin{definition}
Suppose $\overrightarrow{A} \prec \overrightarrow{B}$. Then one can easily see that there exist a minimal submulti-set $\overrightarrow{A}'$ of $\overrightarrow{A}$ and $\overrightarrow{B}'$ of $\overrightarrow{B}$, such that $\overrightarrow{A}' \prec \overrightarrow{B}'$ and $\overrightarrow{A}\setminus \overrightarrow{A}'$ is the same as $\overrightarrow{B} \setminus \overrightarrow{B}'$. We call $\overrightarrow{A}' \prec \overrightarrow{B}'$ the \textit{difference} of $\overrightarrow{A} \prec \overrightarrow{B}$. 
\end{definition}

\begin{example}
The difference of $\{3,4,5,6\} \prec \{3,2,2,3,2,6 \}$ is $\{4,5\} \prec \{2,2,3,2\}$.
\end{example}

The next result shows that the existence of a Hurwitz tree equates to the existence of a chain of exact differential forms of certain type. This phenomenon does not generalize to any cyclic group $G$, though.

\begin{proposition}
\label{propchaindiffforms}
There exists a Hurwitz tree of type $\{h_1, \ldots, h_r\}$ if and only if there exists a chain of partitions of integers $\underline{e}_0:=\{e\} \prec \underline{e}_1 \prec  \ldots \prec \underline{e}_m:=\{h_1, \ldots, h_r\}$, where the difference between $\underline{e}_{i-1}$ and $\underline{e}_{i}$ is $\{l_i\} \prec \{l_{i,1}, \ldots, l_{i,r_i} \}$, together with $m$ exact differential forms
\begin{equation}
    \label{eqndeformationdiff}
    \frac{dx}{\prod_{j=1}^{r_i}(x-P_{i,j})^{l_{i,j}}}.
\end{equation} 
for $1 \leq i \leq m$.
\end{proposition}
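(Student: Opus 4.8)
The plan is to set up a dictionary under which the non-root vertices of a Hurwitz tree correspond bijectively to the refinement steps of the chain, with the differential conductor $\omega_v$ at a vertex playing the role of the exact form of the corresponding step. The guiding observation is that the depths $\delta_v$ and thicknesses $\epsilon_e$ impose no obstruction: once the combinatorial tree and the family $(\omega_v)$ are fixed, every $d_e$ is determined and positive, so one may pick the $\epsilon_e \in \mathbb{Q}_{>0}$ arbitrarily and then solve \ref{c5Hurwitz} recursively outward from the root, starting from $\delta_{v_0} = 0$; the resulting depths are strictly increasing and positive, so \ref{c1Hurwitz} and \ref{c5Hurwitz} hold for free. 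Hence the existence of a Hurwitz tree of type $\{h_1, \ldots, h_r\}$ is equivalent to the existence of its underlying decorated tree together with a compatible family of exact differential conductors, and the whole statement amounts to repackaging this family as a linear chain.

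For the forward direction I would first record the local shape of each conductor. By \ref{c2Hurwitz} and \ref{c6Hurwitz}, on the component $C_v \cong \mathbb{P}^1_k$ the form $\omega_v$ has poles exactly in the directions away from the root -- of order $h_b$ at a leaf and of order $d_{e'}+1$ at a child edge $e'$ with $s(e')=v$ -- a zero of order $d_e - 1$ at the unique root-direction point (with $t(e) = v$), and no further zeros or poles; after moving that point to $\infty$ by a change of coordinate, $\omega_v$ therefore acquires the normalized shape of (\ref{eqndeformationdiff}), the pole orders being the conductors of the away-directions and summing to $d_e + 1$. I would then order the non-root vertices $v_1, \ldots, v_m$ so that each parent precedes its children, set $\underline{e}_0 = \{d+1\}$, and let $\underline{e}_i$ arise from $\underline{e}_{i-1}$ by replacing the entry $d_e + 1$ that was opened up when the parent of $v_i$ was processed with the multiset of away-conductors of $v_i$. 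Each step is a difference of the form $\{l_i\} \prec \{l_{i,1}, \ldots, l_{i,r_i}\}$ whose exact form is $\omega_{v_i}$, and after all $m$ steps only the leaf conductors survive, so $\underline{e}_m = \{h_1, \ldots, h_r\}$.

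Conversely, given the chain I would grow the tree one vertex per step, starting from a root $v_0$ with degeneration $1/x^{d}$, where $d+1 = e = \sum_i h_i$. At step $i$ the refined entry $l_i$ is, by construction, an away-direction previously opened at an already-placed vertex, so I attach a new vertex $v_i$ there, put the exact form $\omega_i$ of (\ref{eqndeformationdiff}) on its component, and glue the parent's pole of order $l_i$ to the zero of $\omega_i$ at $\infty$. The axioms are then checked in turn: \ref{c4Hurwitz} holds because the first form has a zero of order $d-1$ in its root-direction; the gluing along an interior edge is exactly \ref{c3Hurwitz}, since the parent pole order $l_i$ and the child zero order $l_i - 2$ both produce the common value $-(l_i - 1)$, with $l_i \not\equiv 1 \pmod p$ (inherited from the partitions lying in $\Omega$) giving the required non-vanishing modulo $p$; \ref{c2Hurwitz} and \ref{c6Hurwitz} are read off the shape of each $\omega_i$; and \ref{c1Hurwitz}, \ref{c5Hurwitz} are arranged by the free choice of $\delta_v$ and $\epsilon_e$ from the first paragraph.

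The step I expect to be the main obstacle is the bookkeeping that makes the linearization unambiguous in both directions: one must verify that refining a single entry at a time corresponds to attaching a single vertex at a legitimate pending away-direction, and -- crucially for the gluing \ref{c3Hurwitz} -- that the entry refined at step $i$ is literally equal to the pole order created by the earlier step that introduced it, so that no further adjustment of exponents is needed. A smaller point is to confirm that the normalized $\omega_v$ has no spurious zeros in $\mathbb{A}^1$, which forces its numerator to be constant and hence pins down the shape (\ref{eqndeformationdiff}) up to a scalar that exactness lets us ignore. The exactness of each $\omega_v$ is the genuinely nontrivial ingredient, as Example \ref{examplehurwitzobstruction} shows, but since it is preserved verbatim under the correspondence it simply transfers between the two formulations.
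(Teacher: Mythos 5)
Your proposal is correct and takes essentially the same route as the paper: the forward direction reads off the normalized differential conductors at the non-root vertices and linearizes the tree into a chain, while the converse builds the tree one refinement step at a time --- the paper phrases this as an induction on $m$, thickening the leaf corresponding to $l_i$ into an edge of thickness $1$ and solving \ref{c5Hurwitz} for the new depth, exactly as in your first and third paragraphs. Your preliminary observations (that the depths and thicknesses are unobstructed once the forms are fixed, and that \ref{c2Hurwitz} and \ref{c6Hurwitz} pin $\omega_v$ down to the shape (\ref{eqndeformationdiff}) up to an irrelevant scalar) are the same points the paper uses implicitly.
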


\begin{proof}
The "$\Rightarrow$" direction is easily deduced from Corollary \ref{corgoodtreedeformation} where each differential form corresponds to the differential conductor at a vertex $v\neq v_0$ of the associated tree. For instance, one can obtain from the tree in Example \ref{excalculatehurwitzmain} a chain $\{12\} \prec \{ 5, 7 \} \prec \{ 5, 4, 3\}$ and two exact differential forms $\frac{dx}{x^5(x-1)^7}, \frac{dx}{x^4(x-1)^3}$.

"$\Leftarrow$" We prove by induction on $m$. Suppose first that $m=1$. Then, given an exact differential form of type (\ref{eqndeformationdiff}), the tree in Figure \ref{treeequidistant} works. Suppose the statement is true for $m=n$. By induction, there exists a Hurwitz tree $\mathcal{T}'$ with $n$ non-root-vertices whose differential conductors are exact of type (\ref{eqndeformationdiff}). Suppose that the difference between $\underline{e}_n$ and $\underline{e}_{n+1}$ is $\{l\} \prec \{l_1, \ldots, l_r\}$. Suppose, moreover, that there exists an exact differential form $\omega=dx/
\big(\prod_{i=1}^r (x-P_i)^{l_i}\big)$, and the depth at the leaf corresponding to $l$ is $\delta$. Then one can "lengthen" $\mathcal{T}'$ by "thickening" the leaf corresponding to $l$ to an edge of thickness $1$ and adding $r$ leaves of conductors $l_1, \ldots, l_r$ to the end of this edge. Finally, we assign the exact differential form $\omega$ and the depth $\delta+l-1$ to the end of the edge. One can easily check that the extended tree is Hurwitz. 
\end{proof}

\begin{example}

The tree below extends one from Figure \ref{treeequidistant}, 
\tikzstyle{level 1}=[level distance=3cm, sibling distance=2cm]
\tikzstyle{level 2}=[level distance=3cm, sibling distance=1.3cm]
\tikzstyle{level 3}=[level distance=3cm, sibling distance=1.3cm]
\tikzstyle{bag} = [text width=11.5em, text centered]
\tikzstyle{end} = [circle, minimum width=3pt,fill, inner sep=0pt]
\[
\begin{tikzpicture}[grow=right, sloped]
\node[end, label=left:{$v_0$}]{}
child{
        node[end, label=above:{$v_1$}]{}
    child {
                node(b)[end, label=right:
                    {$l_r$}] {}
                edge from parent
            }
    child {
        node(a)[end, label=above:{$v_2$}]{}        
            child {
                node(d)[end, label=right:
                    {$l_{2,s}$}] {}
                edge from parent
            }
            child {
                node(c)[end, label=right:
                    {$l_{2,2}$}] {}
                edge from parent
            }
            child {
                node[end, label=right:
                    {$l_{2,1}$}] {}
                edge from parent
            }
            edge from parent 
            node[above] {$e_2$}
            node[below]  {$1$}
    }
    child {
                node[end, label=right:
                    {$l_1$}] {}
                edge from parent
            }
    edge from parent
    node[above] {$e_0$}
    node[below] {$\epsilon$}
    };
\path (a) -- node[auto=false]{\ldots} (b);
\path (c) -- node[auto=false]{\ldots} (d);
\end{tikzpicture}
\]
\noindent where $l_2=\sum_{j=1}^s l_{2,j}$, and the degeneration types at $v_0, v_1,$ and $v_2$ are $\Big(0, \frac{1}{x^l}\Big), \Big(\epsilon(l-1), \frac{dx}{\prod_{i=1}^r(x-P_i)^{l_i}} \Big)$, and $\Big(\epsilon(l-1)+l_2-1, \frac{dx}{\prod_{j=1}^s(x-P_{2,j})^{l_{2,j}}} \Big)$, respectively.
\end{example}

\begin{remark}
Proposition \ref{propchaindiffforms} implies that one can drop the "depth zero" condition in Corollary \ref{corgoodtreedeformation}. Theorem \ref{theoremmainlocal} then easily follows. 
\end{remark}

We call an Artin-Schreier cover \emph{equidistant} if the distance between any two branch points is the same. A deformation of a one point cover is equidistant if its generic fiber is equidistant, hence its Hurwitz tree has height one. Most of the known deformations have equidistant deformations as building blocks. The follows results then follows immediately from Proposition \ref{propchaindiffforms}.

\begin{corollary}
\label{corequidistant}
Suppose $\{h_1,h_2, \ldots,h_r\}$ is a partition of $\{h\}$. Then there is a equidistant deformation over $k[[t]]$ of type $[h] \xrightarrow{} [h_1, \ldots, h_r]^\top$ if and only if there exists an exact differential of the form
\begin{equation}
\label{equidiffform}
    \frac{dx}{\prod_{i=1}^r(x-q_i)^{h_i}}, 
\end{equation}
 \noindent for some distinct elements $q_i$'s in $k$. 
\end{corollary}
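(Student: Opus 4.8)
The plan is to obtain Corollary~\ref{corequidistant} as the $m=1$ specialization of Proposition~\ref{propchaindiffforms}, using the dictionary between equidistant deformations and Hurwitz trees of height one. First I would record the geometric meaning of \emph{equidistant}: in the semistable-model picture of \S\ref{secsemistablemodel}, the generic branch points are pairwise equidistant precisely when they specialize to distinct smooth points on a single component, which by the construction of \S\ref{seccovertotree} is exactly the condition that the associated Hurwitz tree has height one. Thus a height-one tree has the shape of Figure~\ref{treeequidistant}: a root $v_0$, a single trunk $e_0$, a unique non-root vertex $v_1=t(e_0)$, and $r$ leaves attached to $v_1$ carrying the conductors $h_1,\ldots,h_r$. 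Via Theorem~\ref{theoremmainlocal}, proving the corollary reduces to showing that a height-one Hurwitz tree of type $\{h_1,\ldots,h_r\}$ exists if and only if there is an exact form as in~(\ref{equidiffform}).

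For the ``only if'' direction I would read off the differential conductor $\omega_{v_1}$ at the unique non-root vertex. By~\ref{c6Hurwitz} it has a pole of order $h_i$ at the point $q_i\in\mathbb{A}^1_k$ where the $i$-th leaf attaches; by~\ref{c2Hurwitz} it has no other zeros or poles on $U_{v_1}$; and by~\ref{c4Hurwitz} applied to $e_0$ with $d=h-1$ its order at infinity is $h-2$. A one-line order-count at $\infty$ (substituting $x=1/u$) confirms that $dx/\prod_{i=1}^r(x-q_i)^{h_i}$ has exactly this divisor, so $\omega_{v_1}$ agrees with~(\ref{equidiffform}) up to a nonzero scalar; it is exact because every differential conductor arising from a tree is exact by Proposition~\ref{propactiontotree}. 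Rescaling $x$ if necessary absorbs the scalar, yielding a form of the required shape with distinct $q_i$.

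For the ``if'' direction I would feed the given exact form into Proposition~\ref{propchaindiffforms} with $m=1$: the one-step chain $\{h\}\prec\{h_1,\ldots,h_r\}$ together with this single form produces the height-one tree of Figure~\ref{treeequidistant}. Theorem~\ref{theoremmainlocal} (equivalently, realizing the tree by Theorem~\ref{theoremtreeaction} and invoking Proposition~\ref{propgooddefthenhurwitz} after the depth-zero hypothesis has been removed) then yields a $\mathbb{Z}/p$-deformation of type $[h]\xrightarrow{}[h_1,\ldots,h_r]^{\top}$; since the realizing tree has height one, the deformation is equidistant by the dictionary above.

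The main obstacle is the clean bookkeeping in the ``only if'' step---verifying that the three axioms~\ref{c2Hurwitz}, \ref{c4Hurwitz}, \ref{c6Hurwitz} leave no freedom in $\omega_{v_1}$ beyond scaling---together with pinning down the equivalence ``equidistant $\Leftrightarrow$ height-one tree'' in both directions. Once these are in place, the corollary is a direct restriction of Proposition~\ref{propchaindiffforms}, so no new analytic input is needed.
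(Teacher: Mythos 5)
Your proposal is correct and takes essentially the same route as the paper, which derives the corollary ``immediately'' from Proposition~\ref{propchaindiffforms} via the observation that an equidistant deformation of a one-point cover corresponds to a height-one Hurwitz tree, i.e.\ the $m=1$ case of the chain of exact forms. Your explicit bookkeeping with \ref{c2Hurwitz}, \ref{c4Hurwitz}, \ref{c6Hurwitz} to pin down $\omega_{v_1}$ up to a scalar simply spells out what the paper leaves implicit (and note that exactness is preserved under multiplication by a nonzero scalar, so no rescaling of $x$ is even needed).
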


\begin{definition}
\label{deftypeofdeform}
We say the exact differential form in (\ref{equidiffform}) has \textit{type} $\{h_1, \ldots, h_r\}$.
\end{definition}

In the remainder of the paper, we will focus on differential forms of type (\ref{equidiffform}).




\begin{remark}
\label{remarkreducetypediff}
Suppose we are given a rational function of the form $f(x)=g(x)^p h(x)$. Then $f'(x)=g(x)^p h'(x)$. Hence, it suffices to study differential forms of type (\ref{equidiffform}), where $1<h_i<p$.
\end{remark}

\begin{remark}
In \cite{MR1767273}, Bertin and M{\'e}zard show that the infinitesimal deformation functor of a local Galois cover is represented by a \textit{versal deformation ring}. Besides, they give some descriptions for the versal deformation rings of $\mathbb{Z}/p$-covers. Theorem 3.5 of \cite{2000math.....11098H} and Proposition \ref{propchaindiffforms} imply that one can describe a deformation (over either a mixed or equal characteristic ring) of a $\mathbb{Z}/p$-cover by the associated Hurwitz tree, which, in turn, is determined by a sequence of exact differential forms in equal characteristic case, or exact and logarithmic differential forms in mixed characteristic case. We wonder if one can use a space of differential forms to give a full description of the versal deformation ring of an Artin-Schreier cover. 
\end{remark}

\section{Applications}
\label{secapplication}

\subsection{Some general exact differential form results}
\label{secconnection}

Recall from Proposition \ref{propchaindiffforms} that the deformation of an Artin-Schreier cover is determined by exact differential forms of type (\ref{eqndeformationdiff}). Combining with Remark \ref{remarkreducetypediff}, we simplify Question \ref{questionlocalgoodreduction} as follows.

\begin{question}
\label{questiondifftype}

Let $k$ be an algebraically closed field of characteristic $p>0$. Suppose $1< h_i <p$ for $i=1,2, \ldots, n$ are integers ($n \ge 2$). What are the conditions on the $h_i$'s so that the rational function
\begin{equation}
    \label{eqnexactquestion}
    \frac{1}{\prod_{i=1}^n (x-P_i)^{h_i}}
\end{equation}
\noindent is a derivative of some rational function in $k(x)$ for some $P_i$'s in $k$ pairwise distinct?
\end{question}

When the multi-set $\{h_1, \ldots, h_n\}$ is given, we can answer the question using Gr{\"o}bner Basis techniques as follows. Equation (\ref{eqnexactquestion}) could be written as
\begin{equation}
\label{eqngrobnersetup}
    \frac{\prod_{i=1}^n(x-P_i)^{p-h_i}}{\prod_{i=1}^n(x-P_i)^{p}}=\frac{\sum_j a_j x^j}{\prod_{i=1}^n(x-P_i)^{p}},
\end{equation}
where $a_j$ can be thought of as a polynomial in $k[P_1, \ldots, P_n]$. As the denominator is a $p$-power, the fraction is exact if and only if the numerator $\sum_j a_j x^j$ is exact. That equates to there existing a choice of values for the $P_i$'s so that $a_j=0$ for all $j \equiv -1 \pmod{p}$, or $1 \not \in (a_j)_{j \equiv -1 \pmod{p}}$ by Hilbert's nullstellensatz. Moreover, all the $P_i$'s have to be distinct. That translates to $\prod_{j<k} (P_j-P_k)$ not lying in the radical of $(a_j)_{j \equiv -1 \pmod{p}}$, or the following ideal $$\Big(\{a_j\}_{j \equiv -1 \pmod{p}}, 1-s \cdot \prod_{j<k} (P_j-P_k)\Big)$$  
is not the unit ideal of $k[P_1, \ldots, P_n, s]$ by \cite[\S 15, Corollary 35]{MR2286236}. We summarize by the below proposition.

\begin{proposition}
\label{propgrobner}
Suppose we are given a multi-set $\{h_1, \ldots, h_n\} \in \Omega_{h}$. Then there exists a differential form
\[ \frac{dx}{\prod_{i=1}^n(x-P_i)^{h_i}} \]
\noindent if and only if $\big(\{a_j\}_{j \equiv -1 \pmod{p}}, 1-s \prod_{j<k} (P_j-P_k)\big) \in k[P_1, \ldots, P_n, s]$ is not the unit ideal, where the $a_j$'s are defined in (\ref{eqngrobnersetup}).
\end{proposition}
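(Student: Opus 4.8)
The plan is to treat the proposition as a dictionary entry: translate the analytic condition ``the form (\ref{eqnexactquestion}) is exact for some pairwise-distinct $P_i$'' into the purely algebraic non-triviality of the stated ideal, in three moves. First I would reduce the exactness of the rational differential to the exactness of the polynomial numerator $N(x) := \prod_{i=1}^n (x-P_i)^{p-h_i}$ appearing in (\ref{eqngrobnersetup}); by Remark \ref{remarkreducetypediff} we may assume $1 < h_i < p$, so every exponent $p-h_i$ is positive and $N$ is an honest polynomial whose coefficients $a_j$ lie in $k[P_1,\dots,P_n]$. Second, I would characterize exactness of $N(x)\,dx$ by a linear condition on the $a_j$. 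Third, I would encode ``there exist distinct $P_i$ killing the obstructing coefficients'' as the non-triviality of the ideal, via the Rabinowitsch trick and the Nullstellensatz.

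The heart of the first move is the characteristic-$p$ observation that the denominator $D^p := \prod_{i=1}^n (x-P_i)^p$ is a $p$-th power, so $d(D^p)=0$ and multiplication by $1/D^p = (1/D)^p$ commutes with the exterior derivative. Concretely, if $N\,dx = dH$ for a polynomial $H$, then
\[ \frac{N}{D^p}\,dx = \frac{1}{D^p}\,dH = d\!\left(\frac{H}{D^p}\right), \]
since $d(1/D^p)=0$; conversely, if $\tfrac{N}{D^p}\,dx = dG$ for some $G \in k(x)$, then $N\,dx = D^p\,dG = d(D^p G)$, again because $d(D^p)=0$. Hence $\tfrac{N}{D^p}\,dx$ is exact if and only if $N\,dx$ is, which is exactly the reduction asserted after (\ref{eqngrobnersetup}).

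For the second move, a polynomial differential $\sum_j a_j x^j\,dx$ is exact precisely when no monomial of degree $\equiv -1 \pmod p$ occurs, because $\tfrac{d}{dx} x^{m} = m x^{m-1}$ vanishes exactly when $p \mid m$; thus the monomials $x^j\,dx$ with $j \equiv -1 \pmod p$ are the only obstructions, and $N\,dx$ is exact iff $a_j = 0$ for all such $j$. Finally, the $a_j$ being polynomials in the $P_i$, the existence of a \emph{pairwise-distinct} tuple $(P_1,\dots,P_n)$ with all these $a_j$ vanishing is equivalent, over the algebraically closed field $k$, to the stated ideal having a common zero: adjoining $s$ and the relation $1 - s\prod_{j<k}(P_j-P_k)$ forces $\prod_{j<k}(P_j - P_k) \neq 0$ at any solution, and conversely any distinct tuple with $a_j=0$ extends by $s = 1/\prod_{j<k}(P_j-P_k)$. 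By the weak Nullstellensatz \cite[\S 15, Corollary 35]{MR2286236}, such a common zero exists iff the ideal is proper, completing the translation.

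The only genuinely delicate point is the first move: one must be sure that passing to the $p$-th-power denominator really does preserve exactness in both directions, and this is precisely where working in characteristic $p$ (so that $d$ annihilates $p$-th powers) is used. The remaining steps are formal: verifying that $N$ is a polynomial with coefficients in $k[P_1,\dots,P_n]$ and that the fraction is proper (its numerator degree $np - h$ is strictly below the denominator degree $np$), and then invoking the standard Rabinowitsch/Nullstellensatz package. I expect no further obstacle.
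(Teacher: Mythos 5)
Your proposal is correct and follows essentially the same route as the paper: rewrite the form over the $p$-th-power denominator $\prod_i(x-P_i)^p$, reduce exactness to the vanishing of the coefficients $a_j$ with $j\equiv -1\pmod p$ in the numerator, and encode the existence of a pairwise-distinct solution via the Rabinowitsch trick and the Nullstellensatz. The only difference is that you spell out the justification for ``the fraction is exact iff the numerator is exact'' (using $d(D^p)=0$ in characteristic $p$), which the paper simply asserts.
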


\begin{remark}
\label{remarkgrobner}

We can show whether the ideal in Proposition \ref{propgrobner} is a unit one by checking whether $1$ is its reduced Gr{\"o}bner basis (with respect to any monomial ordering).

\end{remark}

\begin{proposition}
\label{propfirstcombinatorial}
Let $k$ be an algebraically closed field of characteristic $p>0$. Suppose $1< h_i <p$ for $i=1,2, \ldots, n$ are integers ($n \ge 2$). The rational differential form 
\begin{equation}
    \label{eqncanexact}
    \omega=\frac{dx}{\prod_{i=1}^n (x-P_i)^{h_i}}
\end{equation}
\noindent is exact for some distinct $P_i$'s in $k$ only if $\sum_{i=1}^n h_i \ge p+n$. The converse is true when $n=2$.
\end{proposition}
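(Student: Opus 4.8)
\emph{Reduction to a residue condition.} The plan is first to translate exactness of $\omega$ into the vanishing of finitely many residues. The form $\omega=dx/\prod_{i=1}^n(x-P_i)^{h_i}$ has poles only at the finite points $P_i$, of order $h_i$, and since $\sum_i h_i\ge 2n\ge 4$ it has a zero (of order $\sum_i h_i-2$) at infinity; in particular its residue at infinity is $0$. Recall that a rational differential on $\mathbb{P}^1_k$ is exact precisely when, in its partial-fraction expansion, every term $c\,(x-P)^{-m}\,dx$ with $m\equiv 1\pmod p$ vanishes and every polynomial term $c\,x^{j}\,dx$ with $j\equiv -1\pmod p$ vanishes. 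Because each pole order satisfies $1<h_i<p$, the only index $m\equiv 1\pmod p$ occurring at $P_i$ is $m=1$, and there is no polynomial part; hence $\omega$ is exact if and only if $\mathrm{Res}_{P_i}\omega=0$ for every $i$. Equivalently --- this is the description underlying Proposition \ref{propgrobner} --- writing $\omega=(N/Q^{p})\,dx$ with $N=\prod_i(x-P_i)^{\,p-h_i}$ and $Q=\prod_i(x-P_i)$, and using $(Q^{p})'=0$, exactness of $\omega$ is the same as exactness of the polynomial $N$, i.e. $[x^{j}]N=0$ for all $j\equiv -1\pmod p$. I will pass freely between these. It is convenient to note that the target inequality $\sum_i h_i\ge p+n$ is literally $\sum_i(h_i-1)\ge p$.

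\emph{The case $n=2$ (both directions).} After the affine substitution $x\mapsto (x-P_1)/(P_2-P_1)$, which only rescales $\omega$ and preserves exactness, I may take $P_1=0$ and $P_2=1$. Since the two residues sum to $0$, exactness is equivalent to $\mathrm{Res}_0\omega=0$. Expanding $(x-1)^{-h_2}=(-1)^{h_2}\sum_{k\ge 0}\binom{h_2+k-1}{k}x^{k}$ and extracting the coefficient of $x^{\,h_1-1}$ gives $\mathrm{Res}_0\omega=(-1)^{h_2}\binom{h_1+h_2-2}{\,h_1-1\,}$. As $1\le h_i-1\le p-2$, the numbers $h_1-1$ and $h_2-1$ are single base-$p$ digits, so Kummer's theorem says this binomial coefficient is $\equiv 0\pmod p$ exactly when the base-$p$ addition $(h_1-1)+(h_2-1)$ carries, i.e. when $(h_1-1)+(h_2-1)\ge p$, i.e. when $h_1+h_2\ge p+2=p+n$. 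This simultaneously proves necessity and the converse for $n=2$.

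\emph{Necessity for general $n$.} Assume $\omega$ is exact and, for contradiction, that $\sum_i(h_i-1)\le p-1$. I work with $N=\prod_i(x-P_i)^{\,p-h_i}$, whose degree is $D=np-\sum_i h_i=n(p-1)-\sum_i(h_i-1)\ge (n-1)(p-1)$, so that at least one exponent $j\equiv -1\pmod p$ lies in $\{0,\dots,D\}$ and the conditions $[x^{j}]N=0$ are nonvacuous. The goal is to show that these vanishing conditions are incompatible with the $P_i$ being pairwise distinct, so that no exact form of this type exists. The mechanism is already transparent in low rank: for $\{2,2\}$ with $p=5$ one computes $[x^{4}]N=3\,(P_1-P_2)^{2}$, which vanishes only on the diagonal $P_1=P_2$; and for $\{2,2,2\}$ with $p=5$ the three residue equations read $2P_i=P_j+P_\ell$, forcing $P_1=P_2=P_3$. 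Accordingly, the approach is to analyze the relevant obstruction coefficient of $N$ (equivalently, of the reciprocal polynomial $\prod_i(1-P_ix)^{\,p-h_i}$): in the regime $\sum_i(h_i-1)<p$ the multinomial coefficients entering it involve no base-$p$ carries, by Lucas's theorem, and I would use this to show that the obstruction is, up to a nonzero scalar, a positive power of the discriminant $\prod_{i<j}(P_i-P_j)$. Its vanishing then forces a collision $P_i=P_j$, the desired contradiction.

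\emph{The main obstacle.} The hard point is the last claim: that for \emph{every} $n$ the simultaneous vanishing of the forbidden coefficients of $N$ cuts out only the big diagonal $\{\,P_i=P_j\text{ for some }i\ne j\,\}$ when $\sum_i(h_i-1)<p$. For any fixed multi-set this is exactly the ideal-membership decided by Proposition \ref{propgrobner} and Remark \ref{remarkgrobner} (one checks that $\prod_{i<j}(P_i-P_j)$ lies in the radical of the ideal generated by the forbidden coefficients), but the uniform statement requires the careful combinatorial bookkeeping of the coefficients of $\prod_i(1-P_ix)^{\,p-h_i}$ in the no-carry range, which is where the residue identities of the cited discussion enter. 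By contrast, once Question \ref{questiondifftype} is reduced as in the first paragraph (using Remark \ref{remarkreducetypediff} to assume $1<h_i<p$), the $n=2$ converse is complete, and via Corollary \ref{corequidistant} it yields exactly the equidistant deformations needed later.
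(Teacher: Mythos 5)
Your reduction to residues and your treatment of the case $n=2$ are correct and complete: after normalizing to $P_1=0$, $P_2=1$, the residue at infinity vanishes, so the two finite residues sum to zero and exactness is equivalent to $\mathrm{Res}_0\omega=(-1)^{h_2}\binom{h_1+h_2-2}{h_1-1}=0$; Kummer's theorem then gives both implications for $n=2$ simultaneously. This is a genuinely different (and arguably sharper) route than the paper's, which for the converse simply observes that the numerator $x^{p-h_1}(x-Q)^{p-h_2}$ has degree $2p-(h_1+h_2)\le p-2$ and hence no coefficient in a degree $\equiv -1 \pmod p$.

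However, the necessity direction for general $n$ --- the main content of the proposition --- is not proved. You reduce it to the claim that, when $\sum_i(h_i-1)<p$, the simultaneous vanishing of the forbidden coefficients of $N=\prod_i(x-P_i)^{p-h_i}$ forces a collision $P_i=P_j$, check this in two tiny examples, and then explicitly flag the uniform statement as ``the main obstacle'' without establishing it; Proposition \ref{propgrobner} only decides the question for each fixed multi-set, not uniformly, and it is not even clear that the obstruction is a power of the discriminant rather than merely having the big diagonal as its zero locus. The paper closes this gap with a short degree count that your proposal is missing: if $\omega$ is exact, integrating its partial-fraction expansion (legitimate since every pole order is $<p$) exhibits an antiderivative $g/f$ with $f=\prod_i(x-P_i)^{h_i-1}$ and $b:=\deg g<a:=\deg f=\sum_i(h_i-1)<p$; the numerator of $(g/f)'=(g'f-f'g)/f^2$ then has degree exactly $a+b-1$, because its leading coefficient is a nonzero multiple of $b-a$ and $p\nmid a-b$, so $1=\prod_i(x-P_i)^{h_i}\cdot(g/f)'$ would have degree $n+b-1>0$, a contradiction. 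You should either adopt this argument or supply a complete proof of your discriminant claim; as written the proposal does not establish the ``only if'' direction for $n\ge 3$.
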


\begin{proof}
This argument is due to Fedor Petrov (see \S \ref{secacknowledge}). Suppose that $\sum_{i=1}^n h_i<n+p$ and the differential form $\omega$ in (\ref{eqncanexact}) is exact. One may assume that the rational antiderivative of $\omega$ is of the form $g/f$, where $f=\prod_{i=1}^n (x-P_i)^{h_i-1}$, and $\deg g<\deg f=\sum_{i=1}^n (h_i-1)<p$. This may be seen from integrating the partial fraction decomposition of $\prod_{i=1}^n (x-P_i)^{-h_i}$ and convert it back to a single fraction. We have $(g/f)'=(g'f-f'g)/f^2$, and if $\deg f=a$, $\deg g=b$, the degree of the numerator equals $a+b-1$, since the leading coefficient does not vanish (here we use that $p$ can not divide $a-b$). Thus $1=\prod_{i=1}^n (x-P_i)^{h_i} (g/f)'$ has degree $(n+a)+(a+b-1)-2a=n+b-1>0$, a contradiction. Therefore, $\sum_{i=1}^n h_i$ has to be at least $n+p$ for (\ref{eqncanexact}) to be exact.

Suppose $n=2$. One may assume that $P_1=0$. Consider the rational function
 $$ \omega=\frac{1}{x^{e_1}(x-Q)^{e_2}}=\frac{x^{p-e_1}(x-Q)^{p-e_2}}{ x^p(x-Q)^p}. $$
\noindent Hence, $\omega$ is a derivative of some rational function if and only if $x^{p-e_1}(x-Q)^{p-e_2}$ is. The later is exact if and only if all the degree $kp-1$ coefficients are equal to $0$ as $k$ varies. Suppose $e_1+e_2 \ge p+2$. Then the degree of the numerator is $2p-(e_1+e_2) \le p-2$. Thus, it is a derivative. 
\end{proof}

\begin{remark}
The converse is not true in general for $n>2$. See \url{https://mathoverflow.net/questions/310575/residues-of-frac1-prod-i-1n-x-p-ie-i} for some counterexamples to differential forms of type $\{2,2, \ldots, 2\}$ by Gjergji Zaimi. Using the Gr{\"o}bner Basis technique (Proposition \ref{propgrobner} and Remark \ref{remarkgrobner}), one can also show that there is no exact differential form of type $\{2,2,2,6\}$ where $p=7$, even though $2+2+2+6>7+4$.
\end{remark}

\subsubsection{Oort-Sekiguchi-Suwa deformations}
\label{secOSS}

We first observe the following phenomenon.

\begin{proposition}
\label{propOSStype}
The differential form
\begin{equation}
\label{eqndiffOSS}
     \frac{dx}{\prod_{i=1}^r(x-q_i)^{h_i}},
\end{equation}
\noindent where $q_i \in k$'s are pairwise distinct, is exact if all but at most one of the $h_i$'s are divisible by $p$. In particular, there exists a flat deformation of type $[h] \xrightarrow{} [h_1, \ldots, h_r]^{\top}$.
\end{proposition}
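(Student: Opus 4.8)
The plan is to prove exactness by writing down an explicit rational antiderivative, using the characteristic-$p$ fact that the derivative of a $p$-th power vanishes. Reindex the entries so that $h_2,\dots,h_r$ are the ones divisible by $p$ and write $h_i=p\,m_i$ for $i\ge 2$; then $\prod_{i=2}^r(x-q_i)^{h_i}=G^p$ where $G:=\prod_{i=2}^r(x-q_i)^{m_i}$. The decisive structural observation is that the factor $G^{-p}$ is a $p$-th power, so by the product rule recorded in Remark \ref{remarkreducetypediff} (namely $(g^p h)'=g^p h'$), differentiating a product $G^{-p}\cdot h$ only "sees" the factor $h$. This reduces the whole problem to integrating the single remaining factor $(x-q_1)^{-h_1}$.

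I would first handle the generic case, where exactly one entry $h_1$ is not divisible by $p$. Since $\{h_1,\dots,h_r\}$ lies in $\Omega_h$, every entry is $\not\equiv 1 \pmod p$; in particular $h_1-1$ is a unit in $k$, so $(x-q_1)^{-h_1}$ has the rational antiderivative $\tfrac{-1}{(h_1-1)(x-q_1)^{h_1-1}}$. Setting
\[
F:=\frac{-1}{(h_1-1)\,(x-q_1)^{h_1-1}\,G^p}=(G^{-1})^p\cdot\frac{-1}{(h_1-1)(x-q_1)^{h_1-1}},
\]
the product rule gives $F'=(G^{-1})^p\cdot(x-q_1)^{-h_1}=\big(\prod_{i=1}^r(x-q_i)^{h_i}\big)^{-1}$, so the form is exact. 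The remaining degenerate case, in which all $h_i$ are divisible by $p$, is even easier: the whole denominator is a $p$-th power $H^p$, and $F=x/H^p$ satisfies $F'=H^{-p}$ by the same rule (integrating the constant $1$), which is again exactly the form.

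For the "in particular" clause I would simply invoke Corollary \ref{corequidistant}: having produced, for distinct $q_i$, an exact differential of type $\{h_1,\dots,h_r\}$, the corollary yields an equidistant — hence flat — deformation of type $[h]\to[h_1,\dots,h_r]^{\top}$.

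I do not expect any computational obstacle; the one point that genuinely requires care is the hidden hypothesis that makes the antiderivative exist. The integration of $(x-q_1)^{-h_1}$ is possible precisely because $h_1\not\equiv 1\pmod p$, and if the unique non-$p$-divisible entry were $\equiv 1\pmod p$ the form would in fact fail to be exact. The argument therefore leans essentially on the standing convention that the $h_i$ are conductors, i.e. entries of a partition in $\Omega_h$, for which $h_i\not\equiv 1\pmod p$ is automatic, and I would make this dependence explicit rather than leave it implicit.
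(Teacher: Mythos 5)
Your proof is correct and follows essentially the same route as the paper: the explicit antiderivative you write down, $\frac{-1}{(h_1-1)(x-q_1)^{h_1-1}G^p}$, is literally the paper's $\frac{1}{(1-h_1)\prod_{i=2}^{r}(x-q_i)^{h_i}(x-q_1)^{h_1-1}}$, and the "in particular" clause is deduced from the same differential-form-to-deformation correspondence (Corollary \ref{corequidistant} versus Proposition \ref{propchaindiffforms}, which are equivalent here). Your explicit remarks on the all-divisible degenerate case and on the standing hypothesis $h_1\not\equiv 1\pmod p$ are sound additions that the paper leaves implicit.
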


\begin{proof}
Suppose, without loss of generality, that $h_1$ is the only exponent that may not be divisible by $p$. Then we can easily see that differential form (\ref{eqndiffOSS}) is the derivative of the following rational function
\[ \frac{1}{(1-h_1)\prod_{i=2}^{r}(x-q_i)^{h_i} (x-q_1)^{h_1-1}}. \]
\noindent The rest of the proposition follows from Proposition \ref{propchaindiffforms}.
\end{proof}

\begin{remark}
Suppose $h_i$'s are as in Proposition \ref{propOSStype}. Consider the $\mathbb{Z}/p$-cover of $\mathbb{P}^1_{k[[t_1, \ldots, t_r]]}$ defined by
\begin{equation}
    \label{eqnOSS}
     Y^p-Y= \frac{1}{(x-t_1)^{h_1-1}\prod_{i=2}^r (x-t_i)^{h_i}} 
\end{equation}
\noindent Replacing the $t_i$'s by any distinct elements $P_i$'s of $\mathfrak{m}k[[t]]$ (i.e. $v_t(P_i)>0$), one can show that (\ref{eqnOSS}) defines a flat deformation (over $k[[t]]$) of type $[h] \xrightarrow{} [h_1, h_2, \ldots, h_r]^{\top}$.
\end{remark}

\begin{remark}
Suppose $\Psi$ is a $\mathbb{Z}/p$-cover over $k[[t]]$. In \cite{MR1767273}, Bertin and M\'{e}zard study the \emph{versal deformation ring} $R_{\Psi}$ whose spectrum is the formal deformation space of $\Psi$. They construct explicitly a family of deformations of $\Psi$ over a polynomial ring over $R=\mathcal{W}(k)[\zeta_p]$. This family is parameterized by an irreducible component of the formal deformation space, called \emph{the Oort-Sekiguchi-Suwa component}. Theorem 5.3.3 of the same paper shows that the dimension of this component is equal to the dimension of the versal deformation ring. Following \cite[\S 3.1.1]{DANG2020398}, one can realize the characteristic $p$ fibers of these deformations as special cases of one described in Proposition \ref{propOSStype}.  
\end{remark}

\subsubsection{Non Oort-Sekiguchi-Suwa deformations}

In \cite{DANG2020398}, to prove that $\mathcal{AS}_g$ is connected when $g$ is large, we construct some equidistant deformations that do \textit{not} lie in $p$-fibers of the Oort-Sekiguchi-Suwa component. In this section, we realize these deformations in term of exact differential forms.

\begin{proposition}
\label{propnonOSStype}

There exist exact differential forms of the following types:
\begin{enumerate}
    \item \label{firstdeformation} $\{h_1, h_2\}$ where $\overline{h}_1+\overline{h}_2 \ge p+2$,
    \item \label{seconddeformation}$\{h_1, h_2, h_3\}$ where $h_i \equiv (p+1)/2 \pmod{p}$ and $p\ge3$,
    \item \label{thirddeformation}$\{p-1, h_1, h_2\}$ where $h_1, h_2 \not \equiv 1 \pmod{p}$,
    \item \label{fourthdeformation} $\{\underbrace{n+1, n+1, \ldots,n+1}_{p-n+1}\}$ where $1 \le n \le p-1$
    \item \label{fifthdeformation}$\{ 3,2,2,2 \}$ and $\{3,3,2,2 \}$ where $p=5$
\end{enumerate}
\end{proposition}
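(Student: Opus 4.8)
The plan is to dispatch all five families through one common reduction and then to build the required forms case by case. The backbone is Remark \ref{remarkreducetypediff}: if $f=g^p h$ then $f'=g^p h'$, and since $(g^p H)'=g^p H'$ for any antiderivative $H$ of $h$, the form $\frac{dx}{\prod_i(x-q_i)^{h_i}}$ is exact as soon as the reduced form $\frac{dx}{\prod_i(x-q_i)^{\overline h_i}}$ is, where $\overline h_i\in\{2,\ldots,p\}$ is the residue of $h_i$ modulo $p$. Factors with $\overline h_i=p$ (that is, $h_i\equiv 0$) get absorbed into the $p$-th power $g^p$ and drop out, landing us in the Oort-Sekiguchi-Suwa situation of Proposition \ref{propOSStype}; so I may assume $1<h_i<p$ throughout. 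To certify exactness I would use the criterion underlying Proposition \ref{propgrobner}: writing $\frac{1}{\prod_i(x-q_i)^{h_i}}=\frac{\sum_j a_j x^j}{\prod_i(x-q_i)^{p}}$, the form is exact exactly when $a_j=0$ for every $j\equiv -1\pmod p$.

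With this reduction in place, part (1) is essentially automatic: after reducing, the two exponents lie in $\{2,\ldots,p-1\}$ with $\overline h_1+\overline h_2\ge p+2$, which is precisely the hypothesis of the $n=2$ converse in Proposition \ref{propfirstcombinatorial}. That proposition yields an exact form of type $\{\overline h_1,\overline h_2\}$, and the reduction lifts it back to type $\{h_1,h_2\}$. In effect, part (1) says that Proposition \ref{propfirstcombinatorial} survives passage mod $p$.

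The two parts I expect to require genuine work are the $p$-indexed families: (2), with $\overline h_i=(p+1)/2$, and (4), with $p-n+1$ equal exponents $n+1$. Both are symmetric in the $q_i$, so I would place the $q_i$ in a symmetric configuration and let the symmetry collapse the family of conditions $a_j=0$ ($j\equiv -1\pmod p$) to a handful of symmetric identities. For (2) I would take three points with $q_1+q_2+q_3=0$; this is exactly the relation forced by the $p=3$, type $\{2,2,2\}$ computation, where one writes $\frac{1}{P^{(p+1)/2}}=\frac{P^{(p-1)/2}}{P^{p}}$ and uses $(x-q_i)^p=x^p-q_i^p$ to reduce the residue conditions to a single symmetric equation. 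Equivalently, one may seek an antiderivative of shape $Q(x)/P(x)^{(p-1)/2}$ and solve the resulting linear ODE $2Q'P+QP'=2$ for $Q\in k[x]$, whose solvability is what the symmetric choice of $P$ guarantees. For (4) I would try the $q_i$ equal to the roots of $x^{p-n+1}-c$ (which are distinct whenever $p\nmid p-n+1$, i.e. $n\neq 1$) and run the analogous residue bookkeeping; the boundary case $n=1$, type $\{2,\ldots,2\}$ with $p$ entries, would need a separate configuration.

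Finally, part (3) exploits the distinguished exponent $p-1$: since $\frac{1}{(x-q)^{p-1}}$ is already exact on its own (it is the derivative of $\tfrac{1}{2}(x-q)^{-(p-2)}$ for $p>2$), the factor $(x-q)^{p-1}$ behaves as an almost free degree of freedom while balancing $\overline h_1,\overline h_2$, and the same residue computation as above produces a form of type $\{p-1,\overline h_1,\overline h_2\}$; part (5) is just the two fixed multi-sets $\{3,2,2,2\}$ and $\{3,3,2,2\}$ at $p=5$, which I would settle by the explicit Gr\"obner-basis check of Proposition \ref{propgrobner} and Remark \ref{remarkgrobner}. The main obstacle is the uniform-in-$p$ verification for families (2) and (4), namely proving that the symmetric configuration annihilates \emph{all} obstruction coefficients $a_j$ (equivalently, that the ODE for $Q$ admits a polynomial solution) for every prime, not just small cases. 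Should the direct computation prove unwieldy, a clean shortcut is available: the corresponding equidistant deformations were constructed in \cite{DANG2020398}, so Corollary \ref{corequidistant} converts each such deformation directly into an exact differential form of the required type.
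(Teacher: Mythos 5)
Your overall framework (reduce the exponents modulo $p$ via Remark \ref{remarkreducetypediff}, then require the vanishing of the coefficients $a_j$ with $j\equiv -1\pmod p$) is exactly the paper's, and your treatment of (1) via Proposition \ref{propfirstcombinatorial} and of (5) via Proposition \ref{propgrobner} is fine. But the explicit configurations you propose for (2) and (4) are wrong, and (3) is never actually constructed. For (2), the condition $q_1+q_2+q_3=0$ is the answer only for $p=3$: after normalizing $q_1=0$, $q_2=1$, the single surviving obstruction is the degree-$(p-1)$ coefficient $\sum_{i=0}^{(p-1)/2}\binom{(p-1)/2}{i}^2 q_3^{(p-1)/2-i}$, and for $p=5$ this is $q_3^2-q_3+1$, whose roots lie in $\mathbb{F}_{25}\setminus\mathbb{F}_5$ and in particular are not $-1$; concretely, $\frac{dx}{x^3(x-1)^3(x+1)^3}$ has numerator $x^6-2x^4+x^2$ over $\bigl(x(x-1)(x+1)\bigr)^5$, with nonvanishing $x^4$-coefficient. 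The paper instead takes $q_3$ to be \emph{any} root of that polynomial and must invoke a separability result to guarantee a root distinct from $0$ and $1$; that is the real content of case (2), and symmetry does not supply it. For (4), taking the $q_i$ to be the roots of $x^{p-n+1}-c$ fails already for $p=5$, $n=2$: the numerator is $(x^4-c)^2=x^8-2cx^4+c^2$, whose $x^4$-coefficient is $-2c\neq 0$. The configuration that works (the paper's) is $0$ together with the roots of $x^{p-n}-1$, i.e.\ $\frac{dx}{x^{n+1}(x^{p-n}-1)^{n+1}}$, and it covers $n=1$ as well. For (3), exactness is not multiplicative, so the observation that $(x-q)^{-(p-1)}$ is exact on its own gives nothing; one needs an actual third point, e.g.\ the paper's $(1-h_2)/(h_1-1)$.

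Your fallback --- importing the equidistant deformations of \cite{DANG2020398} and converting them through Corollary \ref{corequidistant} --- does prove the bare existence statement, but it inverts the paper's logic: Proposition \ref{propnonOSStype} is used immediately afterwards to give an \emph{alternative} proof of \cite[Theorem 3.7]{DANG2020398}, so resting on that theorem would make the application circular. As written, the proposal therefore does not contain a self-contained proof of cases (2), (3) and (4).
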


\begin{proof}
Item (\ref{firstdeformation}) follows immediately from \ref{propfirstcombinatorial}.

Consider item (\ref{seconddeformation}). Suppose the desired differential is of the form
\begin{equation}
\label{equiform2}
    \frac{dx}{(x-a_1)^{h_1}(x-a_2)^{h_2}(x-a_3)^{h_3}}.
\end{equation}
As discussed in Remark \ref{remarkreducetypediff}, one may assume, without loss of generality, that $h_i=(p+1)/2$. Set $a_1=0$, $a_2=1$, and rewrite the differential form (\ref{equiform2}) as
\[ \frac{x^{(p-1)/2}(x-1)^{(p-1)/2}(x-a_3)^{(p-1)/2} dx}{x^p(x-1)^p(x-a_3)^p}. \]
As the denominator is a $p$-power, the differential form is exact if and only if all the terms of degree congruent to $-1$ modulo $p$ in the numerator are zero. One can easily see the leading term has degree $3(p-1)/2<2p-1$. Hence, it suffices to make the term of degree $p-1$, which is

\begin{equation}
\label{eqntype2deformation}
    \sum_{i+j=\frac{p-1}{2}} {{\frac{p-1}{2}}\choose {i}} {{\frac{p-1}{2}}\choose {j}} a_3^{(p-1)/2-j}=\sum_{i=0}^{(p-1)/2}{{\frac{p-1}{2}}\choose {i}}^2 a_3^{(p-1)/2-i},
\end{equation}
to be equal to zero. We thus want $a_3$ to be a root of (\ref{eqntype2deformation}) (which can be thought of as a polynomial in $k[a_3]$) that is different from $0$ and $1$. As the constant coefficient of the polynomial is nonzero, $0$ cannot be a root. Moreover, since the polynomial $\sum_{i=0}^{(p-1)/2} {{\frac{p-1}{2}}\choose {i}}^2 x^{(p-1)/2-i}$ is separable by \cite[Theorem 4.1]{MR817210} and $(p-1)/2$ is at least $2$ for $p \ge 5$, there are roots (in $k$) of (\ref{eqntype2deformation}) that are different from $1$. Hence, if we pick $a_3$ to be one of these roots, the differential form is exact as desired.

By direct computation, one can show that the differential forms $\frac{dx}{x^{p-1}(x-1)^{h_1}(x-(1-h_2)/(h_1-1))^{h_2}}$, $ \frac{dx}{x^{n+1}(x^{p-n}-1)^{n+1}}$, $ \frac{dx}{x^3(x-1)^2(x^2+x+1)^2}$, and $\frac{dx}{x^3(x-1)^3(x^2+4x+2)^2}$ are solutions to (\ref{thirddeformation}), (\ref{fourthdeformation}), and (\ref{fifthdeformation}), respectively. \end{proof}

\noindent This gives an alternative proof for \cite[Theorem 3.7]{DANG2020398}. We paraphrase the statement of that theorem using the language in this paper.

\begin{theorem}[{c.f. \cite[Theorem 3.7]{DANG2020398}}] There exist equidistant deformations over $k[[t]]$ of the following type.
\begin{enumerate}
    \item $[h] \xrightarrow{} [h_1, h_2]^{\top}$ where $\overline{h}_1$ and $\overline{h}_2$ are non-zero and $\overline{h}_1+\overline{h}_2 \ge p+2$.
    \item $[h] \xrightarrow{} [h_1, h_2, h_3]^{\top}$, where $h_i \equiv (p+1)/2 \pmod{p}$ and $p\ge 3$.
    \item $[h] \xrightarrow{} [p-1, h_2, h_3]^{\top}$ where $\overline{h}_1$ and $\overline{h}_2$ are non-zero.
    \item $[(n+1)(p-n+1)] \xrightarrow{} [\underbrace{n+1, \ldots, n+1}_{p-n+1}]^{\top}$, where $1 \le n \le p-1$.
    \item $[9] \xrightarrow{} [3,2,2,2]^{\top}$ or $[10] \xrightarrow{} [3,3,2,2]^{\top}$ where $p=5$. 
\end{enumerate}
\end{theorem}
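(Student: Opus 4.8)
The plan is to deduce the theorem directly from Proposition \ref{propnonOSStype} together with Corollary \ref{corequidistant}. Recall that Corollary \ref{corequidistant} asserts that, for a partition $\{h_1,\ldots,h_r\}$ of $\{h\}$, an equidistant deformation over $k[[t]]$ of type $[h]\xrightarrow{}[h_1,\ldots,h_r]^{\top}$ exists if and only if there is an exact differential form of type $\{h_1,\ldots,h_r\}$, i.e. a form $dx/\prod_{i=1}^r(x-q_i)^{h_i}$ that is exact for some pairwise distinct $q_i\in k$. Thus it suffices to produce, for each of the five families of target data, an exact differential form of the corresponding type; the source conductor is then forced to be $h=\sum_i h_i$, which matches each displayed source partition (for instance $(n+1)(p-n+1)$ in item (4) and $9=3+2+2+2$, $10=3+3+2+2$ in item (5)).

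First I would line up the five items of the theorem with the five items of Proposition \ref{propnonOSStype}: item (1) of the theorem corresponds to item (\ref{firstdeformation}), item (2) to item (\ref{seconddeformation}), item (3) to item (\ref{thirddeformation}), item (4) to item (\ref{fourthdeformation}), and item (5) to item (\ref{fifthdeformation}). Since Proposition \ref{propnonOSStype} already exhibits exact differential forms of each of these types, applying Corollary \ref{corequidistant} in each case yields the asserted equidistant deformation. In the items whose hypotheses are phrased in terms of the residues $\overline{h}_i$ modulo $p$ (items (1) and (3)), I would first invoke Remark \ref{remarkreducetypediff}: writing each exponent as $h_i=pm_i+\overline{h}_i$ splits off a $p$-th power in the denominator, so the form $dx/\prod(x-q_i)^{h_i}$ is exact if and only if the reduced form $dx/\prod(x-q_i)^{\overline{h}_i}$ is (with the same $q_i$), allowing us to assume $1<h_i<p$ and, for $n=2$, to quote the sharp combinatorial criterion of Proposition \ref{propfirstcombinatorial}, which gives exactness precisely when $\overline{h}_1+\overline{h}_2\ge p+2$.

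The only genuinely substantive content lies inside Proposition \ref{propnonOSStype}, which we are entitled to assume; accordingly the main obstacle is not in the present deduction but has already been resolved there. For completeness I would flag that the most delicate case is item (2): after normalizing $a_1=0$, $a_2=1$ and reducing to exponents $(p+1)/2$, exactness forces $a_3$ to be a nonzero root, different from $1$, of the polynomial $\sum_{i=0}^{(p-1)/2}\binom{(p-1)/2}{i}^2 x^{(p-1)/2-i}$, and the existence of such a root relies on the separability of this polynomial, established via \cite[Theorem 4.1]{MR817210}. Items (\ref{thirddeformation})--(\ref{fifthdeformation}) of Proposition \ref{propnonOSStype} are verified by exhibiting explicit antiderivatives and so reduce to routine (if tedious) computation. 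Finally, I would remark that this argument reproves \cite[Theorem 3.7]{DANG2020398} without the explicit deformation-theoretic constructions used there, the point being that the Hurwitz-tree dictionary of Corollary \ref{corequidistant} converts each existence-of-deformation statement into the purely characteristic-$p$ question of exactness of a single rational differential form.
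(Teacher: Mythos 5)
Your proposal is correct and follows essentially the same route as the paper, whose entire proof is the one-line deduction from Proposition \ref{propchaindiffforms} and Proposition \ref{propnonOSStype}; citing Corollary \ref{corequidistant} instead is an equivalent (and, for equidistant deformations, arguably more precise) reference, since that corollary is just the height-one specialization of Proposition \ref{propchaindiffforms}. The additional remarks on reducing exponents modulo $p$ and on the delicate points inside Proposition \ref{propnonOSStype} are accurate but not needed, as that proposition is taken as given.
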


\begin{proof}
The theorem follows immediately from Proposition \ref{propchaindiffforms} and Proposition \ref{propnonOSStype}.
\end{proof}

\subsection{Disconnectedness of \texorpdfstring{$\mathcal{AS}_g$}{asg}}
\label{secgeometryASg}

Recall that the moduli space $\mathcal{AS}_g$ can be partitioned by strata that are associated to partitions of $2g/(p-1)+2$. Moreover, Pries and Zhu show that the dimension of the stratum indexed by $\overrightarrow{E}=\{h_1, \ldots, h_r \}$ is given by
\[ d-1-\sum_{j=1}^r \bigg( \bigg\lfloor \frac{h_j-1}{p} \bigg\rfloor \bigg) \]
\noindent \cite[Corollary 3.11]{MR2985514}, recall that $d=\frac{2g}{p-1}+2$. Therefore, the irreducible components of $\mathcal{AS}_g$ are the closure of the strata indexed by partitions of the form $\{h_1, \ldots, h_r\}$, where $h_i \le p$ for all $i$. One key ingredient of our first connectedness result is that we prove, where the sum of conductors $d+2$ is not congruent $1$ modulo $p$, that all the strata of non-zero-codimension lie in the same connected component \cite[Corollary 4.3]{DANG2020398}. In general, $\mathcal{AS}_g$ is connected only if none of the strata of codimension-zero is closed (unless there is only one stratum in $\mathcal{AS}_g$). Furthermore, the closedness of a stratum can be realized by the following result.

\begin{proposition}
\label{propcheckclosure}

A stratum of $\mathcal{AS}_g$ indexed by $\overrightarrow{E}=\{h_1, \ldots, h_r\}$ is not closed if and only if there exists a partition $\{g_1, \ldots, g_s\} \subseteq \{ h_1, \ldots, h_r\}$, and an exact differential form of type $\{g_1, \ldots, g_s\}$.

\end{proposition}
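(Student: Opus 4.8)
The plan is to translate the topological statement about strata into the existence of an equal-characteristic deformation, and then feed that deformation through the local-to-global reduction and the Hurwitz-tree/differential-form dictionary established above. First I would record what non-closedness means combinatorially. By Proposition \ref{propdeformclosure} together with Proposition \ref{propclosurestratum}, the closure $\overline{\Gamma}_{\overrightarrow{E}}$ is a union of strata $\Gamma_{\overrightarrow{E}'}$, one for each partition $\overrightarrow{E}' \prec \overrightarrow{E}$ admitting a deformation of type $[\overrightarrow{E}']^{\top} \to [\overrightarrow{E}]^{\top}$. Since $\Gamma_{\overrightarrow{E}}$ itself always occurs (via the trivial deformation), the stratum $\Gamma_{\overrightarrow{E}}$ fails to be closed precisely when there is some strictly coarser $\overrightarrow{E}' \neq \overrightarrow{E}$, $\overrightarrow{E}' \prec \overrightarrow{E}$, carrying a deformation $[\overrightarrow{E}']^{\top} \to [\overrightarrow{E}]^{\top}$. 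Thus the proposition reduces to the assertion that such an $\overrightarrow{E}'$ exists if and only if some sub-multiset of $\overrightarrow{E}$ carries an exact differential form of its type.

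For the ``if'' direction, suppose $\{g_1,\ldots,g_s\} \subseteq \{h_1,\ldots,h_r\}$ (necessarily $s \ge 2$) admits an exact differential form of type $\{g_1,\ldots,g_s\}$, and set $g := \sum_{j} g_j$. Granting $g \not\equiv 1 \pmod p$, Corollary \ref{corequidistant} produces an equidistant deformation of type $[g] \to [g_1,\ldots,g_s]^{\top}$, and $\overrightarrow{E}' := (\overrightarrow{E} \setminus \{g_1,\ldots,g_s\}) \cup \{g\}$ lies in $\Omega_{d+2}$ and is strictly coarser than $\overrightarrow{E}$. Combining this nontrivial local deformation with the trivial deformations on the remaining parts of $\overrightarrow{E}$ and invoking Proposition \ref{propreduce} yields a deformation $[\overrightarrow{E}']^{\top} \to [\overrightarrow{E}]^{\top}$; hence $\Gamma_{\overrightarrow{E}'} \subseteq \overline{\Gamma}_{\overrightarrow{E}}$ with $\overrightarrow{E}' \neq \overrightarrow{E}$, so $\Gamma_{\overrightarrow{E}}$ is not closed.

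For the ``only if'' direction, assume $\Gamma_{\overrightarrow{E}}$ is not closed, so there is a deformation $[\overrightarrow{E}']^{\top} \to [\overrightarrow{E}]^{\top}$ with $\overrightarrow{E}' = \{h'_1,\ldots,h'_n\} \prec \overrightarrow{E}$ and $\overrightarrow{E}' \neq \overrightarrow{E}$. Proposition \ref{propreduce} decomposes it into local deformations $[h'_i] \to [\overrightarrow{E}_{i,2}]^{\top}$ with the $\overrightarrow{E}_{i,2} \subseteq \overrightarrow{E}$ partitioning $\overrightarrow{E}$; since $\overrightarrow{E}' \neq \overrightarrow{E}$, at least one index $i_0$ has $\lvert \overrightarrow{E}_{i_0,2} \rvert \ge 2$. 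By Theorem \ref{theoremmainlocal} this nontrivial local deformation yields a Hurwitz tree of type $\overrightarrow{E}_{i_0,2}$, and by Proposition \ref{propchaindiffforms} the existence of such a tree is equivalent to a chain $\{e\} \prec \underline{e}_1 \prec \cdots \prec \underline{e}_m = \overrightarrow{E}_{i_0,2}$ together with exact differential forms whose types are the successive differences. Reading off the final link $\underline{e}_{m-1} \prec \underline{e}_m$, whose difference has the shape $\{l\} \prec \{l_1,\ldots,l_t\}$ with $\{l_1,\ldots,l_t\} \subseteq \overrightarrow{E}_{i_0,2} \subseteq \overrightarrow{E}$ and $t \ge 2$, gives a sub-multiset of $\overrightarrow{E}$ carrying an exact differential form of its type, as required.

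I expect the main obstacle to be this ``only if'' direction: a deformation only produces a Hurwitz tree, equivalently an entire chain of exact forms via Proposition \ref{propchaindiffforms}, rather than a single exact form supported on a sub-multiset of $\overrightarrow{E}$. The key device is to extract just the last link of that chain, which refines a single conductor into a sub-multiset of $\overrightarrow{E}$ and so has exactly the required shape. A secondary point needing care is the modular bookkeeping in the ``if'' direction, namely verifying that merging $\{g_1,\ldots,g_s\}$ into the single conductor $g = \sum_j g_j$ lands in $\Omega_{d+2}$, i.e. $g \not\equiv 1 \pmod p$; this is precisely the root condition $d = g - 1 \not\equiv 0 \pmod p$ imposed by the data at $v_0$ in Definition \ref{defhurwitztree}, so that an exact form of type $\{g_1,\ldots,g_s\}$ is compatible with the height-one tree of Figure \ref{treeequidistant} exactly when the merge is admissible, and I would confirm this compatibility before applying Corollary \ref{corequidistant}.
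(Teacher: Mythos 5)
Your proposal is correct and follows essentially the same route as the paper: both directions pass through Propositions \ref{propdeformclosure} and \ref{propreduce} to reduce to a single nontrivial local deformation, and then through Proposition \ref{propchaindiffforms}, with the key step in the ``only if'' direction being exactly the paper's extraction of the last link of the chain (whose right-hand side is a sub-multiset of $\overrightarrow{E}$). Your extra care about $\sum_j g_j \not\equiv 1 \pmod p$ in the ``if'' direction is warranted and resolves as you suspect (exactness of a form of type $\{g_1,\ldots,g_s\}$ forces the order of vanishing at infinity to be prime to $p$ after adding one), a point the paper's proof leaves implicit.
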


\begin{proof}

"$\Leftarrow$": Suppose, without loss of generality, that $\{g_1, \ldots, g_s\}=\{h_1, \ldots, h_s\}$. The "$\Leftarrow$" direction then follows immediately from Proposition \ref{propchaindiffforms} as the closure of $\Gamma_{\overrightarrow{E}}$ contains the stratum indexed by $\{\sum_{i=1}^s h_i, h_{s+1}, \ldots, h_r\}$.

"$\Rightarrow$": Suppose $\overrightarrow{E}$ is not closed. Then, by Proposition \ref{propdeformclosure}, there exists a partition $\overrightarrow{E}'=\{l_1, \ldots, l_m\} \prec \overrightarrow{E}$ such that $\Gamma_{\overrightarrow{E}'} \subsetneq \overline{\Gamma}_{\overrightarrow{E}}$. It then follows from Proposition \ref{propreduce} that there exist $1 \le i \le m$ so that $\{l_i\} \prec \{h_{i,1}, \ldots, h_{i,m} \} \subseteq \overrightarrow{E}$, and a deformation of type $[l_i] \xrightarrow{} [h_{i,1}, \ldots, h_{i,m}]^{\top}$. Finally, Proposition \ref{propchaindiffforms} shows that there exists an exact differential form of type $\{g_1, \ldots, g_s\} \subseteq \{h_{i,1}, \ldots, h_{i,m}\}$, where $\{\sum_{i=1}^s g_i\} \prec \{g_1, \ldots, g_s\}$ is the difference between $\overrightarrow{E}$ and the immediate one below in the chain of partitions from that proposition. That completes the proof.
\end{proof}

We can finally give the proof of Theorem \ref{improveconnectedness}. 

\subsubsection{Proof of Theorem \ref{improveconnectedness}}
\label{secproofofimproveconnectedness}
It is already known from Theorem \ref{thmconnected} that, when $p=5$, $\mathcal{AS}_g$ is connected when $g>(p-1)(p-2)=12$. The same result shows that, when $p \ge 5$, $\mathcal{AS}_g$ is disconnected when $(p-1)/2<g \le (p-1)^2/2$. Hence, it suffices to show that $\mathcal{AS}_g$ is disconnected if $(p-1)^2/2<g \le (p-1)(p-2)$, or the sum of conductors $d+2$ is between (and including) $p+3$ and $2p-2$. 

Recall that the irreducible components of $\mathcal{AS}_g$ are indexed by the closure of the strata corresponding to $\{h_1, \ldots, h_r\}$ where $h_i \le p$ for all $i$. Suppose $d+2$ is odd (resp. even). Then there exists a stratum indexed by $\overrightarrow{E}_1:=\{3,2, \ldots, 2\}$ (resp. $\overrightarrow{E}_2:=\{2,2, \ldots, 2\}$). It is straightforward to check that, for any $\{k_1, \ldots, k_s\} \subseteq \overrightarrow{E}_1$ (resp. $\overrightarrow{E}_2$), $\sum_{j=1}^s k_j < p+s$. Here we use that fact that the sum of the entries of $\overrightarrow{E}_1$ (resp. $\overrightarrow{E}_2$), i.e., the number $d+2$, is smaller than $2p-2$. Therefore, it follows from Proposition \ref{propcheckclosure} that the closure of the stratum corresponding to $\{3,2, \ldots, 2\}$ (resp. $\{2,2, \ldots,2\}$) only contains itself. Thus, the moduli space $\mathcal{AS}_g$ is disconnected. \qed


\bibliographystyle{alpha}
\bibliography{mybib}


\end{document}